\numberwithin{equation}{section} 
\numberwithin{table}{section} \setlength{\oddsidemargin}{0in}
\theoremstyle{plain}
\newtheorem{theorem}{Theorem}[section]
\newtheorem{proposition}[theorem]{Proposition}
\newtheorem{lemma}[theorem]{Lemma}
\newtheorem{remark}[theorem]{Remark}
\newtheorem{definition}[theorem]{Definition}
\def\R{\mathbb{R}}
\def\C{\mathbb{C}}
\def\N{\mathbb{N}}
\def\eps{\varepsilon}
\def\dx{\,\mathrm{d}x}
\def\dsigma{\,\mathrm{d}\sigma}
\def\dy{\,\mathrm{d}y}
\def\drho{\,\mathrm{d}\rho}
\begin{document}

\title[]{On the eigenvalues of Aharonov--Bohm operators with varying poles: pole approaching the boundary of the domain}

\author{Benedetta Noris}
\address{Benedetta Noris \newline \indent INdAM-COFUND Marie Curie Fellow,
\newline \indent Dipartimento di Matematica e Applicazioni, Universit\`{a} degli Studi di Milano-Bicocca,
\newline \indent via Bicocca degli Arcimboldi 8, 20126 Milano, Italy.}
\email{benedettanoris@gmail.com}

\author{Manon Nys}
\address{Manon Nys \newline \indent Fonds National de la Recherche Scientifique- FNRS.
\newline \indent Département de Mathématiques, Université Libre de Bruxelles (ULB), 
\newline \indent Boulevard du triomphe, B-1050 Bruxelles, Belgium.
\newline \indent Dipartimento di Matematica e Applicazioni,  Universit\`{a} degli Studi di Milano-Bicocca,
\newline \indent via Bicocca degli Arcimboldi 8, 20126 Milano, Italy.}
\email{manonys@gmail.com}

\author{Susanna Terracini }
\address{Susanna Terracini \newline \indent Dipartimento di Matematica “Giuseppe Peano”, Universit\`{a} di Torino,
\newline \indent Via Carlo Alberto 10, 20123 Torino, Italy.}
\email{susanna.terracini@unito.it}

\thanks{The authors are partially supported by the project ERC
Advanced Grant  2013 n. 339958: ``Complex Patterns for Strongly Interacting Dynamical Systems -
COMPAT''. M. Nys is a Research Fellow of the Belgian Fonds de la Recherche Scientifique - FNRS. The authors wish to thank Virginie Bonnaillie-No\"el for providing Figures \ref{fig.pole_approach_boundary} and \ref{fig.lambda_2and3_a}}

\date{\today}

\maketitle

\begin{abstract}
We continue the analysis started in \cite{BonnaillieNorisNysTerracini2013,TN1}, concerning the behavior of the eigenvalues of a magnetic Schr\"odinger operator of Aharonov-Bohm type with half-integer circulation.
We consider a planar domain with Dirichlet boundary conditions and we concentrate on the case when the singular pole approaches the boundary of the domain.
The $k$-th magnetic eigenvalue converges to the $k$-th eigenvalue of the Laplacian. We can predict both the rate of convergence and whether the convergence happens from above or from below, in relation with the number of nodal lines of the $k$-th eigenfunction of the Laplacian.
The proof relies on a detailed asymptotic analysis of the eigenfunctions, based on an Almgren-type frequency formula for magnetic eigenfunctions and on the blow-up technique.\\ 
\break
2010 \emph{AMS Subject Classification.} 35B40, 35J10, 35J75, 35P20, 35Q40.
\\
\emph{Keywords}. Magnetic Schr\"odinger operators, eigenvalues, nodal domains, Almgren's function.
\end{abstract}

\section{Introduction}
In this paper we continue the analysis started in \cite{BonnaillieNorisNysTerracini2013,TN1}, concerning the behavior of the eigenvalues of the magnetic Schr\"odinger operator
\begin{equation}\label{eq:magnetic_operator_def}
(i \nabla + A_{a})^{2}=-\Delta +i\nabla\cdot A_{a} +2i A_{a}\cdot \nabla +|A_{a}|^2
\end{equation}
as the pole $a\in\Omega$ moves inside of $\Omega$ and eventually hits the boundary $\partial \Omega$. Here $\Omega\subset\R^2$ is open, bounded and simply connected, and we impose zero boundary conditions on $\partial \Omega$. The magnetic potential $A_a$ has the form
\begin{equation}\label{eq:magnetic_potential_definition}
A_{a}(x) = \frac{1}{2} \left( - \frac{x_{2} - a_{2}}{(x_{1} - a_{1})^{2}+(x_{2} - a_{2})^{2}}, \frac{x_{1} - a_{1}}{(x_{1} - a_{1})^{2}+(x_{2} - a_{2})^{2}} \right),
\end{equation}
for $a=(a_1,a_2)\in\Omega$ and $x=(x_{1},x_{2})\in\Omega\setminus\{a\}$.
Such magnetic potential is generated by a very thin solenoid orthogonal to the plane, and the associated magnetic field is a $\pi$-multiple of the Dirac delta orthogonal to the plane. Though the magnetic field vanishes almost everywhere, its presence affects the spectrum of the operator, giving rise to the so called Aharonov-Bohm effect \cite{AB}. 

With no additional effort with respect to the standard eigenvalue problem, we can introduce a weight
\begin{equation}\label{eq:weight_assumptions}
p \in C^{\infty}(\overline{\Omega}), \quad p(x)>0, \ x\in\Omega,
\end{equation}
and study the following modified eigenvalue problem
\begin{align}\label{eq:main_with_weight}
(i \nabla + A_a)^2 \varphi_k^a = \lambda_k^a \, p(x) \varphi_k^a, \quad \text{ with } \quad \varphi_k^a =0 \text{ on } \partial\Omega.
\end{align}
Throughout the paper, we will denote by $\lambda_k^a$, $k\in \N=\{1,2,\ldots\}$, the eigenvalues in \eqref{eq:main_with_weight} arranged in an increasing sequence and counted with their multiplicity. We will reserve the notation $\lambda_k$, $\varphi_k$ for the eigenvalues and eigenfunctions of the Laplacian with weight $p(x)$ in $\Omega$ and with zero boundary conditions (again increasing and counted with their multiplicity). Of course, our main interest is the case $p(x)=1$.

The study of the behavior of the eigenvalues and the link with the nodal domains of the corresponding eigenfunctions is motivated by its relation to spectral minimal partitions, see \cite{BonnaillieLena2012,HelfferSurvey,HH2012,HH2013,HHT2009,HHT2010dim3,HHT2010}. It has been proved in \cite{HHT2009} that, in dimension 2, if all the clustering points of a minimal partition have an even multiplicity, then the partition is nodal. The results in \cite{BH,BHH,BHV,TN1} establish a strong relation between  the minimal partitions having points with odd multiplicity with the nodal domains of Aharonov-Bohm eigenfunctions.

The main results obtained in \cite{BonnaillieNorisNysTerracini2013} and \cite{TN1} in connection with this topic concern the critical points of the map
\[
a\in\Omega\mapsto \lambda_k^a.
\]
In particular, in  \cite{BonnaillieNorisNysTerracini2013} the authors have analyzed the relation between such critical points and the nodal properties of the corresponding eigenfunctions. To this aim, let us recall the definition of order of vanishing of a function $u$ at an interior point $b$.
\begin{definition}[Interior zero of order $h/2$]
Let $u:\Omega\subset\R^2\to\C$, $b\in\Omega$ and $h\in\N$.
\begin{itemize}
\item[(i)] If $h$ is even, we say that $u$ has a zero of order $h/2$ at $b$ if it is of class at least $C^{h/2}$ in a neighborhood of $b$ and $u(b)=\ldots=D^{h/2-1}u(b)=0$, while $D^{h/2}u(b)\neq0$.
\item[(ii)] If $h$ is odd, we say that $u$ has a zero of order $h/2$ at $b$ if $u(x^2+b)$ has a zero of order $h$ at $x=0$ (here $x^2$ is the complex square).
\end{itemize}
\end{definition}

Then, a link between the order of vanishing of the function $a\mapsto \lambda_k^a$ can be established: indeed, the following result holds in case $p(x)\equiv1$ (but it is valid for any $p$ satisfying \eqref{eq:weight_assumptions}).

\begin{theorem}[{\cite{BonnaillieNorisNysTerracini2013,TN1}}] \label{theorem:comportmentofeigenvelue}
Let $\Omega\subset\R^2$ open, bounded and simply connected. Let $b\in\Omega$ and $\lambda_k^{b}$ simple. $\varphi_k^{b}$ has a zero of order $h/2$ at $b$, with $h\geq3$ odd, if and only if $b$ is an extremal point of the map $a\mapsto \lambda_k^a$. Moreover, in this case we have
\begin{align}\label{eq:expansion}
|\lambda_k^{a} - \lambda_k^{b}|  \leq C |a-b|^{(h+1)/2} \quad \text{ as }a\to b,
\end{align}
for a constant $C>0$ independent of $a$.
\end{theorem}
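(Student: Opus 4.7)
The plan is to combine a Hadamard-type variational formula for the map $a \mapsto \lambda_k^a$ with a precise asymptotic description of the eigenfunction $\varphi_k^b$ near the pole. Since $\lambda_k^b$ is assumed simple, analytic perturbation theory applies to the family $(i\nabla + A_a)^2$ once one handles the moving singularity through a local gauge transformation identifying $A_a$ with $A_b$ outside a neighborhood containing both poles; this reduces the task to analysing a fixed operator perturbed by terms controlled by $|a-b|$.

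First I would establish the leading asymptotics of $\varphi_k^b$ at $b$ via an Almgren-type frequency formula for the magnetic operator. The quotient
\[
N(\rho) = \frac{\rho \int_{B_\rho(b)} |(i\nabla + A_b)\varphi_k^b|^2 \dx}{\int_{\partial B_\rho(b)} p \, |\varphi_k^b|^2 \dsigma}
\]
is monotone in $\rho > 0$ up to an absolutely continuous correction arising from $\lambda_k^b$ and the weight $p$, and its limit $N(0^+)$ coincides with the order of vanishing of $\varphi_k^b$ at $b$. A blow-up argument on the rescalings $\tilde\varphi_\rho(x) = \varphi_k^b(b + \rho x) / \|\varphi_k^b(b + \rho\,\cdot\,)\|_{L^2(\partial B_1)}$ then produces a homogeneous limit which, after the gauge rotation by $e^{-i\theta/2}$, is an antisymmetric harmonic function on the plane of degree $h/2$. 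Such profiles exist only for odd $h \in \N$, and their explicit form is $c\,\rho^{h/2}\sin(h\theta/2 + \theta_0)$ in polar coordinates centred at $b$; this identifies the order $h/2$ appearing in the statement.

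Second, I would apply a Hadamard-type formula expressing $\lambda_k^a - \lambda_k^b$ as a contour integral on a small circle around $b$ involving $\varphi_k^b$ (together with its conjugate) and the gauge phase between $A_a$ and $A_b$. Substituting the $h/2$-homogeneous leading profile into this integral and expanding in powers of the displacement $v = a-b$, the linear term in $v$ is proportional to the first angular Fourier coefficient of the asymptotic profile; since the leading angular mode has frequency $h/2$, this coefficient vanishes precisely when $h \geq 3$. Hence $b$ is an extremum if and only if $h \geq 3$ is odd, and the first nonvanishing contribution in the expansion of $\lambda_k^a - \lambda_k^b$ is of order $|v|^{(h+1)/2}$, yielding the claimed bound \eqref{eq:expansion}.

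The main obstacle will be to handle the singular character of the gauge transformation relating $A_a$ and $A_b$---which jumps by $\pi$ across a cut joining the two poles---within the perturbation step, and to justify the substitution of the blow-up profile into the Hadamard formula \emph{uniformly} in $a$ near $b$, as required by the quantitative estimate. Both steps hinge on strong convergence of the normalised eigenfunctions $\varphi_k^a$ in magnetic Sobolev norms; this in turn uses the simplicity of $\lambda_k^b$ to rule out eigenvalue crossings and to ensure that the order of vanishing is preserved (or only drops to the generic value $1/2$) along approximating sequences.
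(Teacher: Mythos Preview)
This theorem is not proved in the present paper. It is quoted from the earlier works \cite{BonnaillieNorisNysTerracini2013,TN1} and stated in the introduction as background; the paper's own contributions are the boundary results (Theorems~\ref{theorem:at_least_one_nodal_line}, \ref{theorem:blow_up_almgren}, \ref{theorem:no_nodal_lines}) and Proposition~\ref{prop:unique_limit_profile}. There is therefore no proof here to compare your proposal against.

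That said, the paper does give a hint about the original argument: it remarks that the techniques in \cite{BonnaillieNorisNysTerracini2013} were ``based on the application of local inversion methods'' and that these ``hardly extend to the study of the behavior at the boundary,'' which is why the present paper develops a different, min-max based approach. Your sketch --- Almgren frequency to pin down the vanishing order, blow-up to a homogeneous profile, then a Hadamard-type first variation formula whose leading term vanishes exactly when the angular frequency of the profile exceeds one --- is broadly in the spirit of what is done in those references, and the ingredients you list (the frequency formula for the magnetic operator from \cite{FFT2011}, the gauge reduction of Lemma~\ref{lemma:gauge_invariance}, the asymptotic expansion in Proposition~\ref{proposition:asymptotic_expansion_eigenfunction}) are the right ones. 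The step you correctly flag as delicate --- controlling the singular gauge between $A_a$ and $A_b$ uniformly as $a\to b$ and justifying the substitution of the blow-up profile into the variation formula --- is exactly where the work lies; in the cited papers this is handled by passing to the double covering and using elliptic estimates there, rather than by a direct Hadamard computation on $\Omega$. If you want to reconstruct a full proof, that is the route to follow.
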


In this paper we address the study of the behavior of the magnetic eigenvalues as the pole approaches the boundary of the domain, starting from the observation that, since simply connectedness is restored when the pole lays on the boundary, there holds
\begin{theorem}[{\cite{BonnaillieNorisNysTerracini2013,Lena}}]\label{theorem:continuity}
Let $\Omega\subset\R^2$ open, bounded and simply connected.
For every $k\in \N$ we have that $\lambda_k^{a}\to \lambda_k$ as $a\to\partial\Omega$.
\end{theorem}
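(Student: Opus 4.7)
The plan is to prove $\lambda_k^{a_n}\to\lambda_k$ along an arbitrary sequence $a_n\to a_0\in\partial\Omega$, by establishing both the $\limsup$ and $\liminf$ bounds via the Courant--Fischer characterization of the eigenvalues. The cornerstone is the pointwise gauge identity
\[
(i\nabla + A_a)\bigl(e^{i\theta_a/2} u\bigr) = i\, e^{i\theta_a/2}\, \nabla u,
\]
valid wherever a single-valued branch of $\theta_a(x)=\arg(x-a)$ has been fixed, since $A_a=\tfrac{1}{2}\nabla\theta_a$ locally. To make $e^{i\theta_{a_n}/2}$ single-valued on $\Omega$, I cut out a short curve $\gamma_n$ joining $a_n$ to $a_0$, whose length tends to $0$; in the limit $a_0\in\partial\Omega$ no cut is needed, because the simply connected domain $\Omega$ admits a genuine continuous branch of $\theta_{a_0}$.

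For the upper bound, I take an $L^2(p\dx)$-orthonormal family of standard eigenfunctions $\varphi_1,\ldots,\varphi_k$ and a cutoff $\eta_n$ supported away from a thin tubular $\delta_n$-neighborhood of $\gamma_n$, with $\delta_n=\sqrt{\operatorname{length}(\gamma_n)}$ so that $\int_\Omega|\nabla\eta_n|^2\dx\to 0$. The test functions $v_j^n:=\eta_n e^{i\theta_{a_n}/2}\varphi_j$ lie in the magnetic form domain; by the gauge identity $|(i\nabla+A_{a_n})v_j^n|^2=|\nabla(\eta_n\varphi_j)|^2$, so their Rayleigh quotients converge to $\lambda_j$ and their cross products $\int_\Omega v_i^n\overline{v_j^n}\,p\dx$ converge to $\delta_{ij}$. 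Courant--Fischer therefore yields $\limsup_n\lambda_k^{a_n}\leq\lambda_k$. For the lower bound, I normalize the first $k$ magnetic eigenfunctions $\varphi_j^{a_n}$ in $L^2(p\dx)$ and set $u_j^n:=e^{-i\theta_{a_n}/2}\varphi_j^{a_n}$ on $\Omega\setminus\gamma_n$, with the branch chosen so that $\theta_{a_n}\to\theta_{a_0}$ uniformly on compact subsets of $\Omega\setminus\{a_0\}$. The gauge identity gives $|\nabla u_j^n|=|(i\nabla+A_{a_n})\varphi_j^{a_n}|$, so the upper bound already proved implies that $\{u_j^n\}$ is bounded in $H^1(K)$ on each compact $K\Subset\Omega\setminus\{a_0\}$. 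A diagonal extraction produces $u_j^n\to u_j$ weakly in $H^1_{\mathrm{loc}}(\Omega\setminus\{a_0\})$, strongly in $L^2_{\mathrm{loc}}$, with $u_j$ solving $-\Delta u_j=\mu_j\,p\,u_j$ on $\Omega\setminus\{a_0\}$ (where $\mu_j:=\lim\lambda_j^{a_n}$) and vanishing on $\partial\Omega\setminus\{a_0\}$.

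The hard part is to turn the $u_j$ into genuine Dirichlet eigenfunctions on all of $\Omega$ and to ensure they stay orthonormal and linearly independent in the limit. The diamagnetic inequality forces $|\varphi_j^{a_n}|=|u_j^n|$ to be bounded in $H^1_0(\Omega)$, so by Rellich $|u_j^n|\to|u_j|$ strongly in $L^2(\Omega)$: no $L^2$ mass leaks to $a_0$, the uniform $H^1$ bound extends to all of $\Omega$ via lower semicontinuity, and, since a point has zero capacity in $\R^2$, $u_j\in H^1_0(\Omega)$ and the equation $-\Delta u_j=\mu_j p u_j$ holds distributionally across $a_0$. The same equi-integrability lets $\int_\Omega u_i^n\overline{u_j^n}\,p\dx=\delta_{ij}$ pass to the limit, so $u_1,\ldots,u_k$ form an orthonormal system of Dirichlet eigenfunctions with eigenvalues $\mu_1\leq\cdots\leq\mu_k$, whence Courant--Fischer gives $\lambda_k\leq\mu_k$. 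Combined with the upper bound, this identifies every subsequential limit of $\lambda_k^{a_n}$ with $\lambda_k$, so the full sequence converges.
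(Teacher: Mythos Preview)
The paper does not contain a proof of this statement: Theorem~\ref{theorem:continuity} is quoted from \cite{BonnaillieNorisNysTerracini2013,Lena} and used as an input (see e.g.\ the choice of $r_0$ in \eqref{eq:r_0} and the proof of Lemma~\ref{lemma:boundNr0}, where the convergence $\lambda^a\to\lambda$ and even the stronger fact $\|e^{-i\theta_a/2}\varphi^a-\varphi\|_{H^1(\Omega)}\to 0$ are invoked via \cite[Remark~4.4]{BonnaillieNorisNysTerracini2013}). So there is no in-paper proof to compare your attempt against.

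That said, your sketch is sound and is in the same spirit as the arguments in the cited references: gauge away $A_{a_n}$ by the phase $e^{\pm i\theta_{a_n}/2}$ on the simply connected set $\Omega\setminus\gamma_n$, use Courant--Fischer in both directions, and exploit that a point has zero $H^1$-capacity to pass to the limit at $a_0\in\partial\Omega$. A couple of points deserve a sentence more of care when you write it out. First, in the upper bound you use that $\varphi_1,\dots,\varphi_k$ are bounded in $L^\infty$ to control $\int|\varphi_j|^2|\nabla\eta_n|^2$; this is fine by elliptic regularity but should be said, since the cutoff estimate $\int|\nabla\eta_n|^2\to 0$ alone does not suffice. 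Second, in the lower bound, the strong $L^2(\Omega)$ convergence of $u_j^n$ relies on two ingredients you correctly identify but should separate cleanly: (i) Rellich on each $\Omega\setminus B_r(a_0)$ (eventually $\gamma_n\subset B_r(a_0)$, so $u_j^n\in H^1(\Omega\setminus B_r(a_0))$ with a uniform bound), and (ii) equi-integrability near $a_0$, which follows from the diamagnetic bound on $|\varphi_j^{a_n}|$ in $H^1_0(\Omega)$ and Rellich on all of $\Omega$. Once the limits $u_j$ are $L^2(p\,\mathrm{d}x)$-orthonormal in $H^1_0(\Omega)$ and solve $-\Delta u_j=\mu_j p u_j$ distributionally across $a_0$, the min-max inequality $\lambda_k\le\mu_k$ is exactly as you wrote, and the subsequence argument closes the proof.
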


Our first aim is to obtain an expansion similar to \eqref{eq:expansion}  when the point $b$ is on the boundary $\partial\Omega$. It is worthwhile noticing that the techniques that were developed in \cite{BonnaillieNorisNysTerracini2013}, based on the application of local inversion methods, hardly extend to the study of the behavior at the boundary. For this reason, we propose here a different and more efficient approach, based on the min-max characterization of the eigenvalues.

Another phenomenon enlightened by the numerical simulations in \cite{BonnaillieNorisNysTerracini2013} attracted our attention. The convergence $\lambda_k^{a}\to \lambda_k$ described in Theorem \ref{theorem:continuity} can take place either from above or from below, depending on the value of $k$ and on the position of the pole, see Figure \ref{fig:lambda_j_a}. Of course, by the diamagnetic inequality, $\lambda_1^a>\lambda_1$ for every $a\in\Omega$. 
A more detailed analysis suggests that the different behaviors are related to the position of the pole with respect to the nodal lines of $\varphi_k$. If the pole $a$ moves from $\partial\Omega$ along a nodal line of $\varphi_k$, then the nodal line of $\varphi_k^a$ is shorter than the one of $\varphi_k$, locally for $a$ close to the boundary, see Figures \ref{fig:phi_3} and \ref{fig:phi_3a}. This determines a decrease of the energy: we can observe in Figure \ref{fig:lambda_3_a} that $\lambda_3^a<\lambda_3$ for $a$ approaching the point $(1,0)\in\partial\Omega$ along the symmetry axis of the circular sector. Conversely, if $a$ moves from $\partial\Omega$ not on a nodal line of $\varphi_k$ as in Figures \ref{fig:phi_2}-\ref{fig:phi_2a}, this creates a new nodal line in the magnetic eigenfunction and consequently an increase of the energy.

\begin{figure}[h!t]
\begin{center}
\subfigure[$\varphi_2$ \label{fig:phi_2}]{\includegraphics[height=3cm]{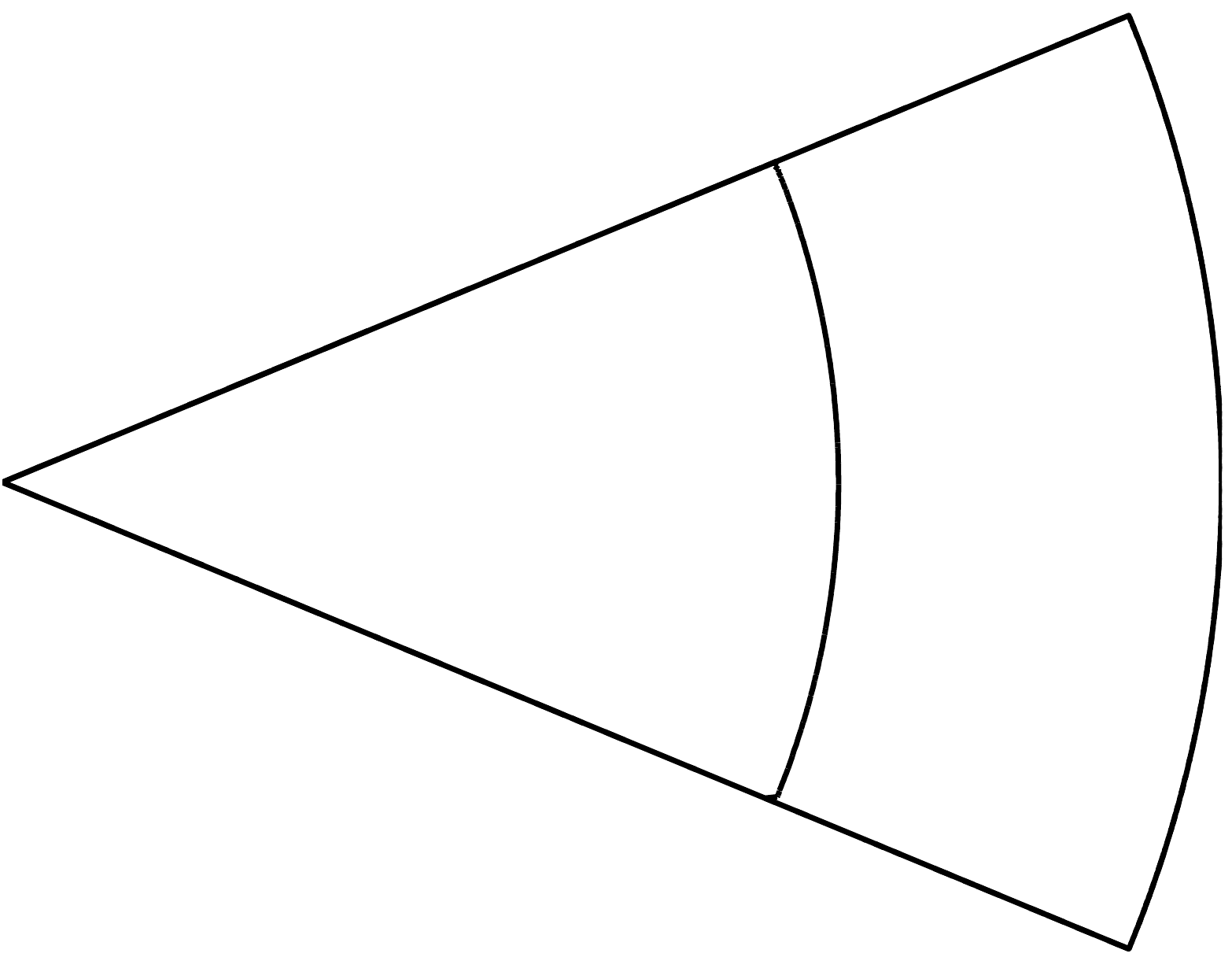}}
\subfigure[$\varphi_2^a$, $a=(0.9,0)$ \label{fig:phi_2a}]{\includegraphics[height=3cm]{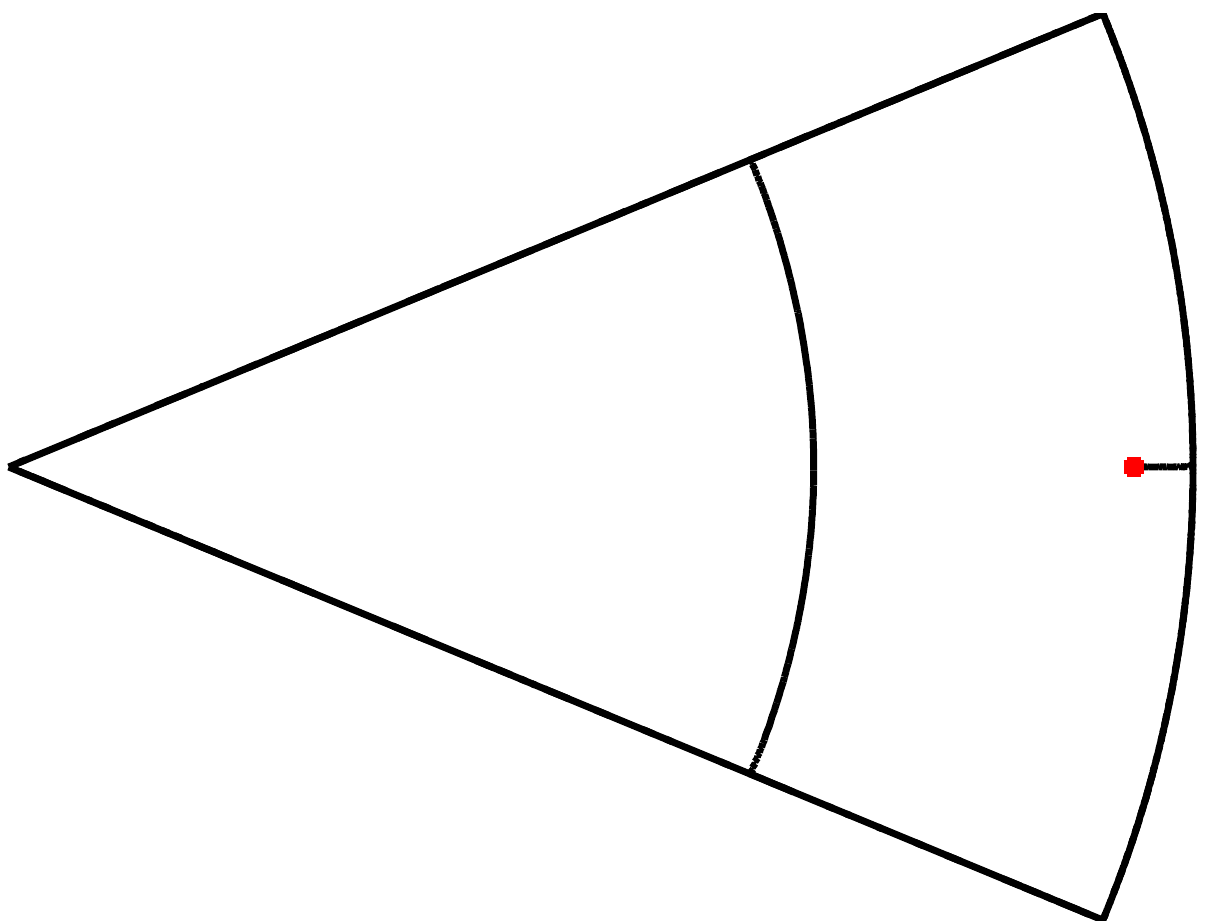}}
\subfigure[$\varphi_3$ \label{fig:phi_3}]{\includegraphics[height=3cm]{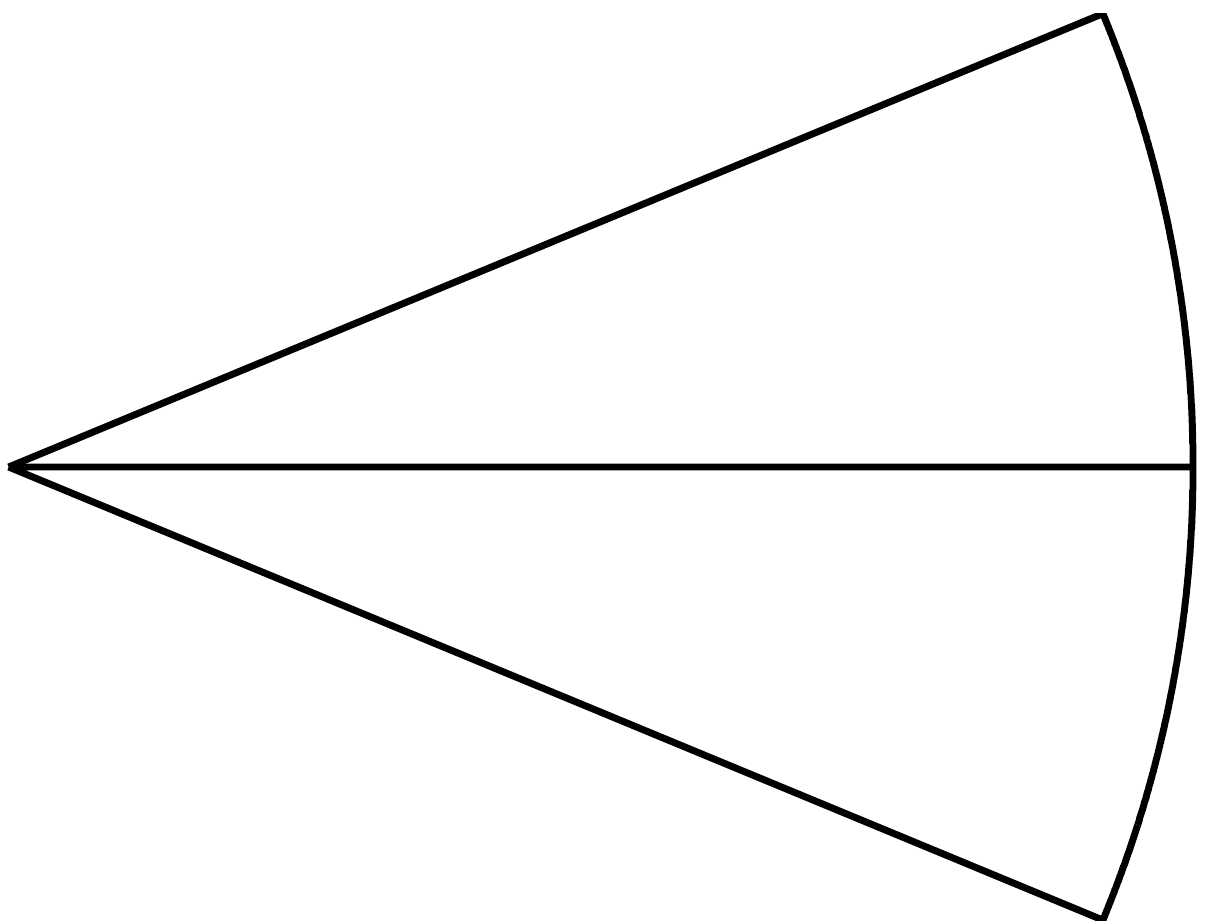}}
\subfigure[$\varphi_3^a$, $a=(0.9,0)$ \label{fig:phi_3a}]{\includegraphics[height=3cm]{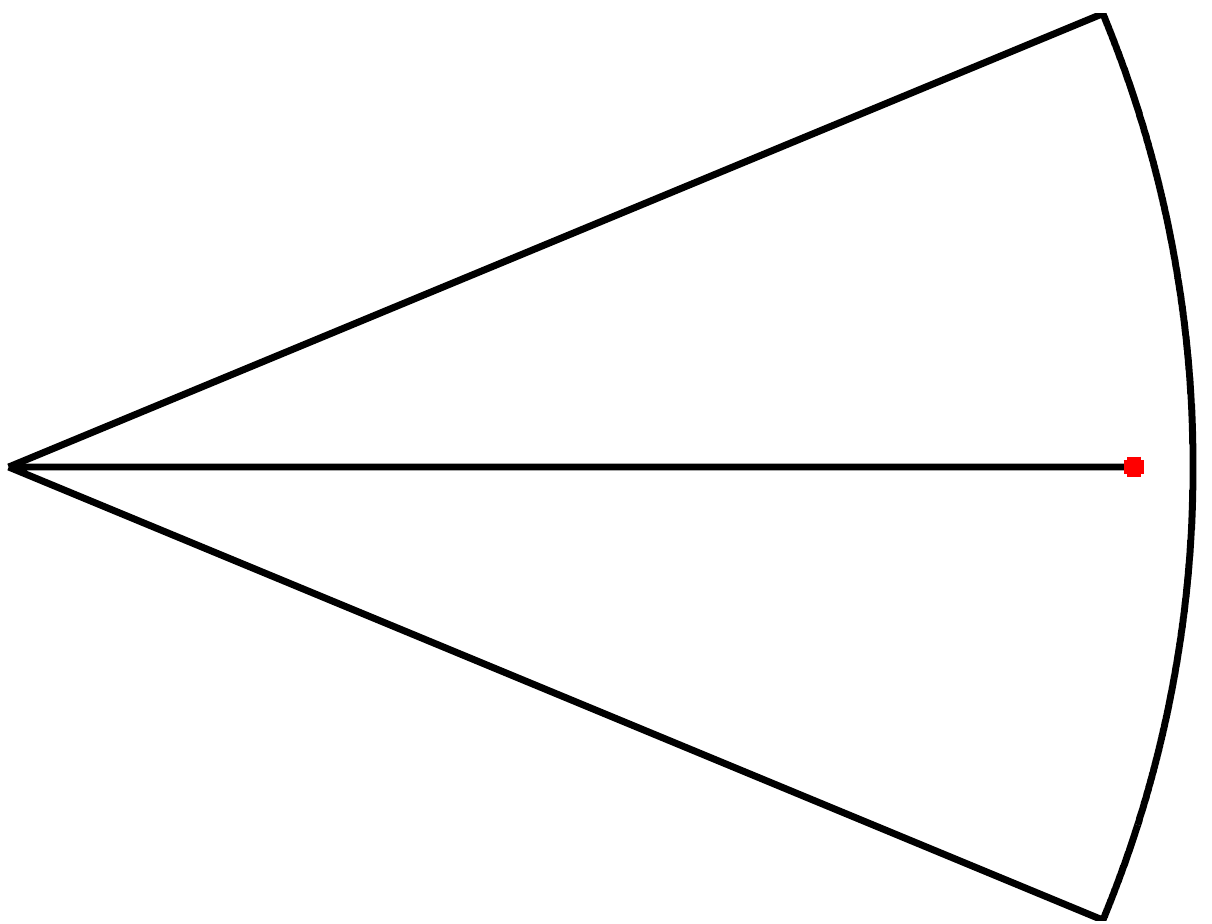}}
\caption{Nodal lines of second and third eigenfunctions in the circular sector of aperture $\pi/4$.\label{fig.pole_approach_boundary}}
\end{center}
\end{figure}

\begin{figure}[h!t]
\begin{center}
\subfigure[$a\mapsto \lambda_j^a$, $j=1,\ldots,9$, $a$ belonging to the symmetry axis \label{fig:lambda_j_a}]{\includegraphics[height=3.8cm]{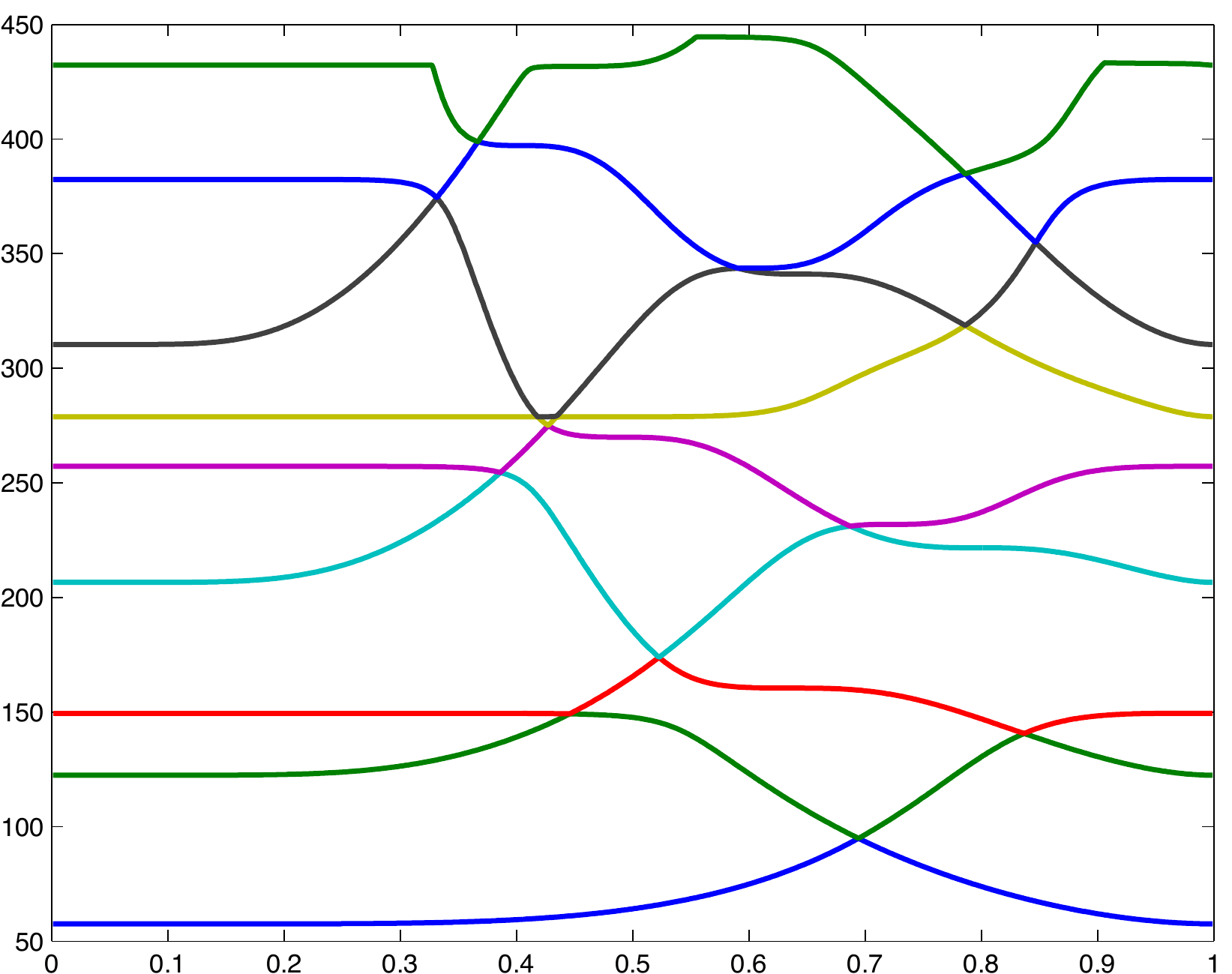}}
\hspace*{0.2cm}
\subfigure[$a\mapsto \lambda_2^a$\label{fig:lambda_2_a}]{\includegraphics[height=3.8cm]{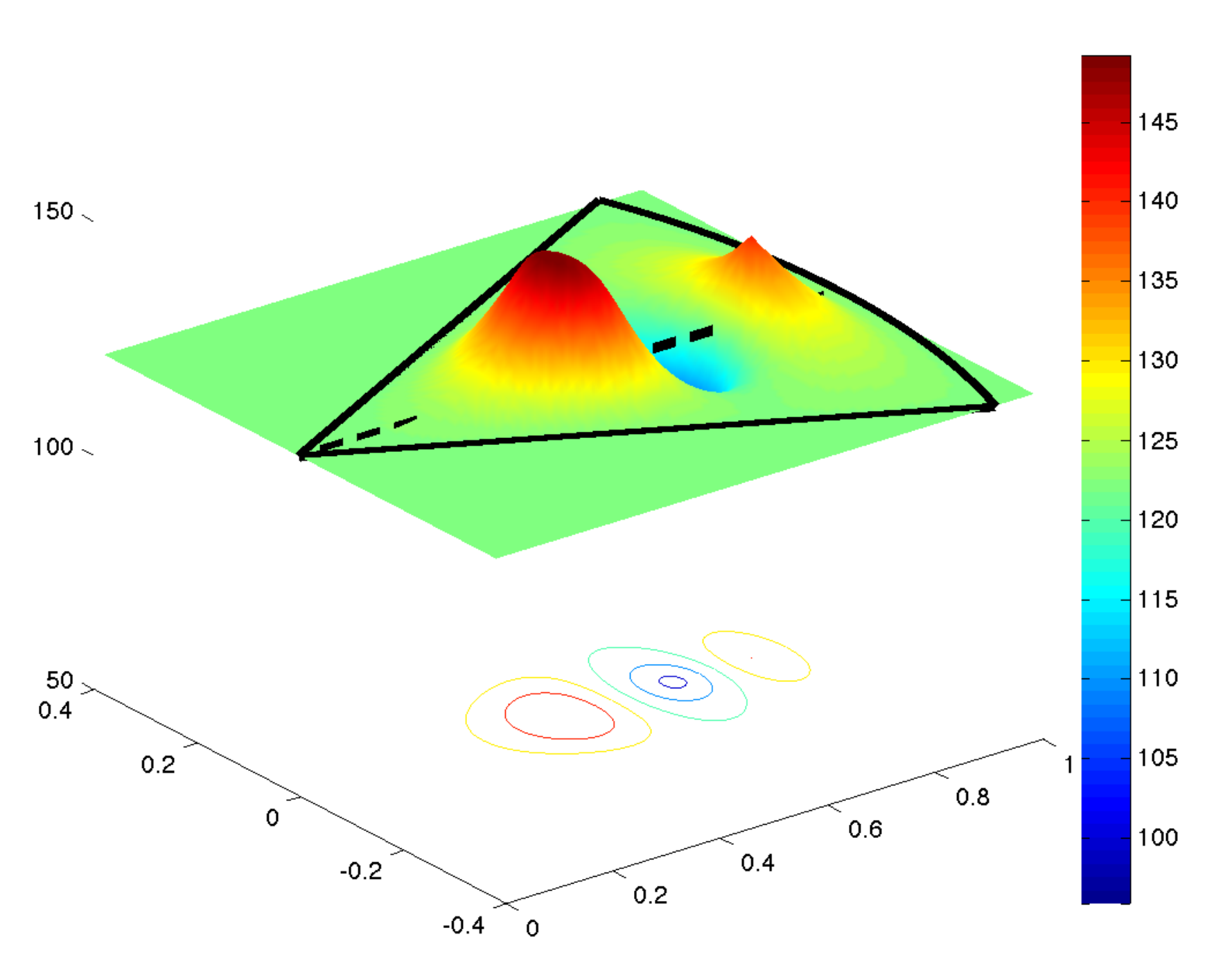}}
\hspace*{0.2cm}
\subfigure[$a\mapsto \lambda_3^a$\label{fig:lambda_3_a}]{\includegraphics[height=3.8cm]{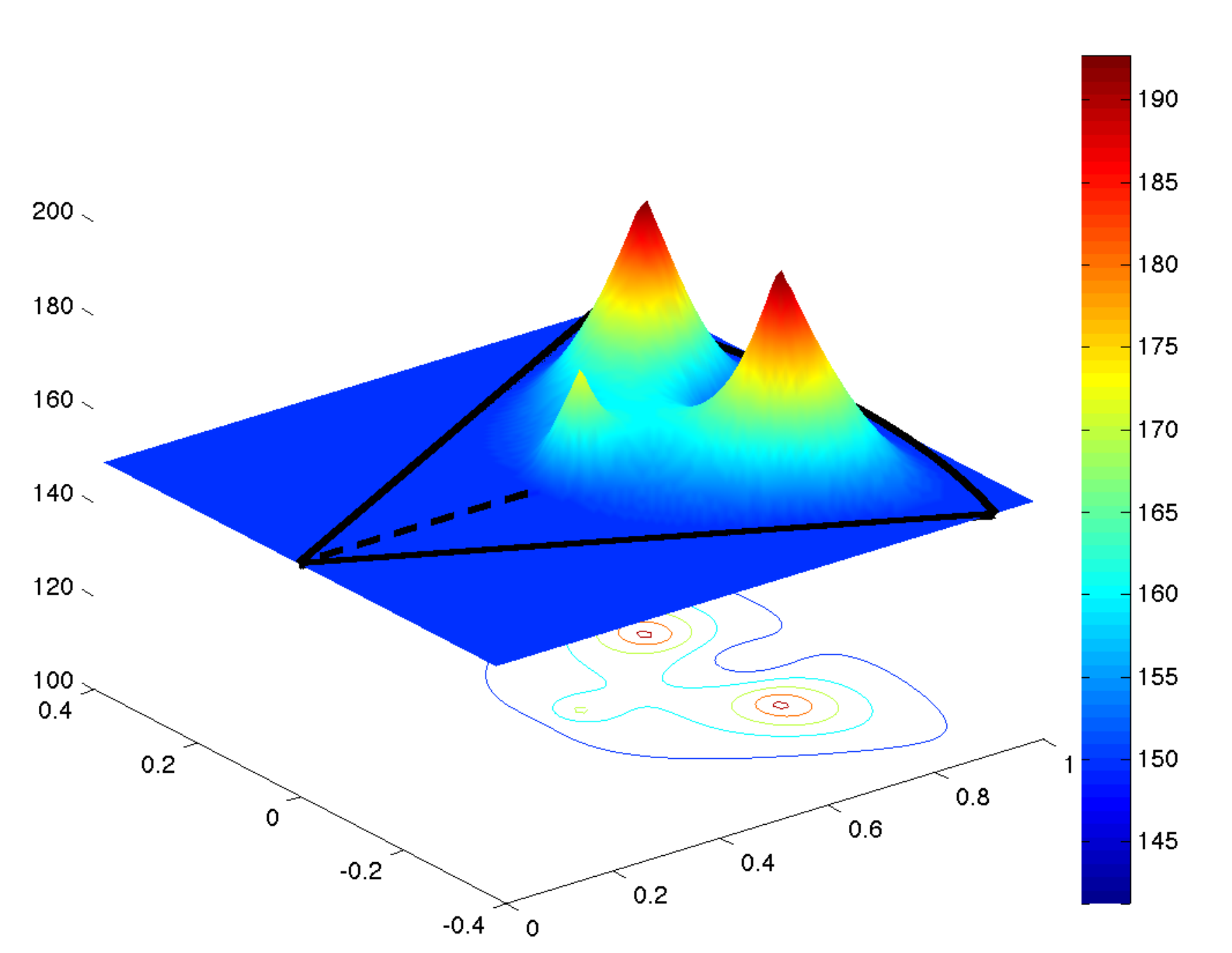}}
\caption{The map $a\mapsto \lambda_j^a$ for the circular sector of aperture $\pi/4$.\label{fig.lambda_2and3_a}}
\end{center}
\end{figure}

The second aim of this paper is to provide a theoretical justification of these facts. Before stating our main results, let us give the definition of order of vanishing at a boundary point $b\in\partial\Omega$ of a function $u$, with $u=0$ on $\partial\Omega$. As we will see, this definition makes sense only if $\partial\Omega$ is sufficiently regular (see the case of conical singularities in Appendix \ref{appendix:cone}).
\begin{definition}[Boundary zero of order $h/2$]
Let $\Omega\subset\R^2$ open, bounded and of class $C^\infty$.
Let $u:\Omega\to\C$, $u=0$ on $\partial\Omega$, $b\in\partial \Omega$ and $h$ even.

We say that $u$ has a zero of order $h/2$ at $b$ if there exists a neighborhood $U(b)$ such that $u\in C^{h/2}(U(b)\cap\Omega)$ and $u(b)=\ldots=D^{h/2-1}u(b)=0$ while $D^{h/2}u(b)\neq0$ in $U(b)\cap\Omega$.
\end{definition}
Note that, whereas for $b\in\Omega$ a zero of order $h/2$ corresponds to $h$ arcs of nodal lines meeting at $b$, for $b\in\partial\Omega$ a zero of order $h/2$ corresponds to $h/2-1$ arcs of nodal lines meeting at $b$. This is due to the fact that we are considering zero boundary conditions. Our first main result is the following.

\begin{theorem} \label{theorem:at_least_one_nodal_line}
Let $\Omega\subset\R^2$ open, bounded, simply connected and of class $C^\infty$. Let $p$ satisfy \eqref{eq:weight_assumptions}.
Suppose that $\lambda_{k-1}<\lambda_{k}$ and that there exists an eigenfunction $\varphi_{k}$ associated to $\lambda_k$ having a zero of order $h/2\geq 2$ at $b\in\partial\Omega$, i.e. at least one piece of nodal line ending at $b$. Denote by $\Gamma$ any such piece of nodal line. Then there exists $C > 0$, not depending on $a$, such that
\begin{align} \label{eq:at_least_one_nodal_line}
\lambda_{k}^{a} \leq \lambda_{k} - C |a-b|^{h } \quad \text{for } a\in\Gamma,\ a\to b.
\end{align}
\end{theorem}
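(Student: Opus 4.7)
The plan is to reduce the magnetic problem to a real Laplace problem on a slit domain via a gauge transformation, and then use min-max on a carefully chosen $k$-dimensional test subspace. Set $r:=|a-b|$ and let $\gamma_a\subset\Gamma$ be the sub-arc from $b$ to $a$. Since $\Omega\setminus\gamma_a$ is simply connected, I fix a continuous branch of the polar angle $\theta_a$ around $a$ and set $\xi_a:=e^{i\theta_a/2}$, which is single-valued on $\Omega\setminus\gamma_a$ with $\xi_a|_+=-\xi_a|_-$ on $\gamma_a$. The unitary map $\varphi\mapsto u=\overline{\xi_a}\varphi$ identifies $H^{1,a}_0(\Omega;\mathbb{C})$ with
\[
\mathcal{A}_a:=\{u\in H^1(\Omega\setminus\gamma_a):\ u=0\text{ on }\partial\Omega,\ u|_+=-u|_-\text{ on }\gamma_a\},
\]
and intertwines the magnetic quadratic form with the plain Dirichlet form. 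Thus $\lambda_k^a$ equals the $k$-th eigenvalue of $-\Delta$ on $\mathcal{A}_a$ with weight $p$, and by min-max it suffices to exhibit a $k$-dimensional $V_a\subset\mathcal{A}_a$ on which $R(u):=\int|\nabla u|^2/\int p u^2\le\lambda_k-Cr^h$.

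The key observation is that $\varphi_k\in\mathcal{A}_a$ already (because $\varphi_k\equiv 0$ on $\gamma_a\subset\Gamma$), while boundary regularity at $b$ gives the blow-up $\varphi_k(\rho,\theta)\sim c\,\rho^{h/2}\sin(h\theta/2)$ in local polar coordinates $(\rho,\theta)$ around $b$; on $\Gamma$ (meeting $b$ at some angle $\theta_\Gamma=2\pi j/h$) the normal derivative then satisfies $\partial_\nu\varphi_k\sim c(h/2)(-1)^j\rho^{h/2-1}$. I take $V_a=\mathrm{span}(\tilde\varphi_1,\dots,\tilde\varphi_{k-1},\varphi_k+\psi_a)$, where for $j<k$, $\tilde\varphi_j:=\chi_a\varphi_j$ with $\chi_a$ a smooth cut-off equal to $1$ outside $B_{2r}(b)$ and $0$ on $B_r(b)$, so $\tilde\varphi_j$ vanishes near $\gamma_a$ and hence lies in $\mathcal{A}_a$; integration by parts using $-\Delta\varphi_j=\lambda_j p\varphi_j$ gives $R(\tilde\varphi_j)=\lambda_j+O(r^2)<\lambda_k$ for $r$ small, thanks to the gap $\lambda_{k-1}<\lambda_k$. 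For the $k$-th slot, $\psi_a\in\mathcal{A}_a$ is a small antisymmetric perturbation supported in $B_r(b)$ and vanishing on $\partial\Omega$. Expanding $R(\varphi_k+\psi_a)$ and integrating $\int\nabla\varphi_k\cdot\nabla\psi_a$ by parts on $\Omega\setminus\gamma_a$, the antisymmetry of $\psi_a$ together with the continuity of $\partial_\nu\varphi_k$ across $\gamma_a$ produces a boundary jump term, and one obtains
\[
R(\varphi_k+\psi_a)=\lambda_k+\frac{\int|\nabla\psi_a|^2-\lambda_k\int p\psi_a^2-4\int_{\gamma_a}\partial_\nu\varphi_k\,\psi_a|_+\,dS}{\int p(\varphi_k+\psi_a)^2}.
\]

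I exploit this boundary term by choosing $\psi_a(x)=r^{h/2}w((x-b)/r)$, where $w$ is a fixed admissible profile on the blow-up model (the half-plane with a unit slit in direction $\theta_\Gamma$): antisymmetric across the model slit, zero on the model boundary line, supported in a unit ball, with the sign of $w|_+$ chosen so the coupling is positive. A change of variables yields $\int_{\gamma_a}\partial_\nu\varphi_k\,\psi_a|_+=c(h/2)(-1)^j r^h I(w)$ with $I(w):=\int_0^1 t^{h/2-1}w|_+(t)\,dt$, while $\int|\nabla\psi_a|^2=r^h E(w)$ and $\int p\psi_a^2=O(r^{h+2})$. Optimizing the scaling $w\mapsto\alpha w$ gives a numerator of $-C_0 r^h+O(r^{h+2})$ with $C_0=4c^2(h/2)^2 I(w)^2/E(w)>0$ an explicit model constant, hence $R(\varphi_k+\psi_a)\le\lambda_k-Cr^h$. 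A standard matrix-pencil perturbation argument then shows that the largest generalized eigenvalue of $(A_{ij},M_{ij})=(\int\nabla u_i\cdot\nabla u_j,\int pu_iu_j)$ on $V_a$ is $\le\lambda_k-C'r^h$: the block $j<k$ has top eigenvalue $\lambda_{k-1}+O(r^2)<\lambda_k$ by the gap, and the off-block couplings are $O(r^{h/2+1})$ in $A$ and smaller in $M$, yielding a correction of order $r^{h+2}/(\lambda_k-\lambda_{k-1})$, absorbed by the dominant $r^h$ term since $h\ge 4$.

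The main obstacle is the construction of the blow-up profile $w$ and the verification that $I(w)^2/E(w)$ can be made positive on the admissible class. This is a purely local, $a$-independent variational problem; the fact that $\partial_\nu\varphi_k$ has leading behavior $\rho^{h/2-1}\neq 0$ on $\Gamma$ near $b$ ensures that the linear functional $I(\cdot)$ is nontrivial on the admissible class (e.g.\ a smooth $w|_+$ that is nonnegative and positive somewhere on $(0,1)$), and scaling-optimization delivers the positive coefficient $C$ independent of $a$.
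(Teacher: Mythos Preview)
Your approach is correct and shares the overall min-max framework with the paper, but the execution is genuinely different. Both proofs rectify the problem via the gauge $e^{i\theta_a/2}$, build a $k$-dimensional test space around $\varphi_1,\dots,\varphi_k$, and finish with a matrix perturbation using the gap $\lambda_{k-1}<\lambda_k$. The difference lies in the $k$-th test function and how the negative $r^h$-term is extracted. The paper keeps the magnetic setting: it takes $v_i^{ext}=e^{i\theta_a/2}\varphi_i$ outside a half-disk $D_{2|a|}^+$ and replaces it inside by the \emph{solution} $v_i^{int}$ of the magnetic equation with the same trace; the sign of the crucial boundary term then follows from the variational characterization of $v_k^{int}$ as minimizer (with $e^{i\theta_a/2}\varphi_k$ admissible inside because $\varphi_k$ vanishes on $\Gamma$), while the exact order $|a|^h$ comes from a separate technical lemma (Lemma~4.1) involving elliptic estimates on the double covering. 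You instead work in the real slit picture and use an \emph{explicit} additive perturbation $\psi_a=r^{h/2}w((x-b)/r)$; the negative term appears directly as the cross term $4\int_{\gamma_a}\partial_\nu\varphi_k\,\psi_a|_+$, and optimizing over the scaling of $w$ yields both the sign and the order in one stroke, without Green's function estimates.

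Two technical points deserve care. First, the identification $\mathcal{D}^{1,2}_{A_a}(\Omega)\simeq\mathcal{A}_a$ requires the Hardy condition $u/|x-a|\in L^2$, which fails if your perturbation does not vanish at $a$; this is automatically handled if $w$ is supported compactly in the open unit ball (away from the slit tip), and such profiles clearly exist with $I(w)\neq 0$. Second, for small $r$ the arc $\gamma_a$ and $\partial\Omega$ are curved, so the boundary conditions for $\psi_a$ and $\tilde\varphi_j$ must be matched to the actual (curved) geometry; this is routine after a local diffeomorphism straightening both $\partial\Omega$ and $\Gamma$ at $b$ (the paper does the first via a conformal map), and only affects lower-order terms. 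With these caveats, your constructive route is a valid alternative that trades the paper's abstract minimization argument and its auxiliary Lemma~4.1 for a hands-on local model computation.
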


The study of the case when the pole approaches $\partial \Omega$ at a point where no nodal lines of $\varphi_k$ end, requires additional work. The difficulty is that, in order to prove the opposite inequality with respect to \eqref{eq:at_least_one_nodal_line}, we need some information about the behavior of $\varphi_k^a$ when $a$ is close to the boundary. In this direction, we prove the uniqueness of the following limit profile.

\begin{proposition}\label{prop:unique_limit_profile}
Let $e=(1,0)$ and let $\psi$ be a solution to
\begin{align} \label{eq:limit_profile}
\left\{ \begin{array}{ll} (i \nabla + A_e)^2 \psi = 0 \quad & \mathbb{R}^{2}_+ \\
                            \psi  = 0 \quad & \{x_1=0\},
                            \end{array} \right. 
\end{align}
satisfying the normalization condition
\begin{equation}\label{eq:limit_profile_normalization}
\lim_{r\to+\infty} \frac{r\| (i\nabla+A_e)\psi \|^2_{L^2(D_r^+(0))}}{\|\psi\|^2_{L^2(\partial D_r^+(0))}}  =1,
\end{equation}
where $D_r^+(0):=D_r(0)\cap\{x_1>0\}$. Then
\begin{itemize}
\item[(i)] $\psi$ is unique up to a multiplicative constant;
\item[(ii)] for $r>1$ we have $\displaystyle \psi(r,\theta)=C e^{i\theta_e/2}\left(r\cos\theta -\frac{\beta}{\pi}\frac{\cos\theta}{r}+ \sum_{n\geq3, n \, odd} \frac{b_n}{r^n} \cos(n\theta) \right)$, where $\beta>0$ is explicitly characterized in \eqref{eq:beta_def} and $b_n\in\R$.
\end{itemize}
\end{proposition}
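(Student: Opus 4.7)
The plan is to gauge away the magnetic potential outside the unit disk and then to exploit separation of variables together with a reflection symmetry. Since $A_e=\tfrac12\nabla\theta_e$, and the region $\{r>1\}\cap\mathbb{R}^2_+$ is simply connected and disjoint from the pole $e=(1,0)$, the phase $e^{i\theta_e/2}$ is single-valued there, so $u:=e^{-i\theta_e/2}\psi$ is harmonic on this region with Dirichlet boundary conditions on $\theta=\pm\pi/2$. Separation of variables on this infinite sector yields
\[
u(r,\theta) = \sum_{n\text{ odd}} (a_n r^n + b_n r^{-n})\cos(n\theta) + \sum_{n\text{ even},\,n\geq 2}(c_n r^n + d_n r^{-n})\sin(n\theta).
\]
The Almgren-type normalization \eqref{eq:limit_profile_normalization} forces every growing mode other than $a_1 r\cos\theta$ to vanish (a mode $r^N$ with $N\geq 2$ would push the frequency at infinity to $N$), and absorbing $a_1$ into the constant $C$ of (ii) we may assume $a_1=1$.

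For the uniqueness claim in (i), I would argue that if $\psi_1,\psi_2$ are two solutions with matching leading asymptotic $r\cos\theta$, then $\psi_1-\psi_2$ is $O(r^{-1})$ at infinity and its magnetic gradient is $O(r^{-2})$, hence square-integrable on $\mathbb{R}^2_+$. Testing the equation against $\chi_R(\psi_1-\psi_2)$ for a smooth cutoff $\chi_R$ equal to $1$ on $D_R^+$ and supported in $D_{R+1}^+$, the commutator term involving $\nabla\chi_R$ is pointwise $O(R^{-3})$ on an annulus of area $O(R)$ and therefore vanishes as $R\to\infty$. This yields $\int_{\mathbb{R}^2_+}|(i\nabla+A_e)(\psi_1-\psi_2)|^2=0$, so $(i\nabla+A_e)(\psi_1-\psi_2)\equiv 0$. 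Since $A_e$ has circulation $\pi$ around $e$, any single-valued solution of this first-order equation must vanish identically, hence $\psi_1=\psi_2$.

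To derive (ii), I would exploit the reflection $\sigma(x_1,x_2)=(x_1,-x_2)$. A direct calculation gives $\sigma^*A_e=-A_e$, so $\overline{\psi}\circ\sigma$ solves the same problem with the same normalization. By the uniqueness in (i), $\overline{\psi}\circ\sigma=e^{i\alpha}\psi$ for some $\alpha\in\mathbb{R}$, and absorbing $\alpha/2$ into the phase of $C$ we obtain $\overline{\psi}\circ\sigma=\psi$; translated to $u$, this says $u$ is even in $\theta$, killing every $\sin(2k\theta)$ mode. The specific value $b_1=-\beta/\pi$ is then recovered by matching the $r^{-1}\cos\theta$ coefficient of $u$ with a Green/Pohozaev-type integral identity (pairing $\psi$ with a suitable test profile supported near infinity), which identifies this decay coefficient with the global invariant $\beta$ defined in \eqref{eq:beta_def}.

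The main obstacle is the uniqueness step: one must carefully justify the integration by parts when the difference $\psi_1-\psi_2$ is only $O(r^{-1})$ at infinity (and hence not globally in $L^2$), and then rigorously deduce the monodromy-based vanishing. A secondary technical point is verifying that $\psi$ lies in the natural magnetic Sobolev space near $e$ so that all boundary-term estimates remain legitimate; this is standard for Aharonov--Bohm eigenfunctions but should be stated explicitly before invoking the cutoff argument.
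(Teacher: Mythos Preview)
Your uniqueness argument for (i) is genuinely different from the paper's and essentially correct. The paper argues by Almgren frequency: given two solutions $\psi,\tilde\psi$, it chooses a linear combination $t\psi+\tilde\psi$ whose Pohozaev remainder $M$ vanishes, so that $N(t\psi+\tilde\psi,r,0,0,A_e)$ is monotone; combining $N\geq 1$ (from the Dirichlet condition) with $\limsup N\leq 1$ (from the normalization) forces $N\equiv 1$, contradicting the local expansion at $e$. Your cutoff/energy argument is more elementary and avoids the frequency machinery, at the cost of needing the Laurent expansion for $r>1$ first (to get the $O(r^{-1})$, $O(r^{-2})$ decay of the difference and its magnetic gradient). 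Both routes work; yours is shorter once the expansion is in hand.

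There is, however, a real gap in your treatment of (ii). First, the reflection symmetry does not say what you claim. From $\overline{\psi}\circ\sigma=\psi$ and $\theta_e\circ\sigma=-\theta_e$ you only obtain $u(r,\theta)=\overline{u(r,-\theta)}$, which for a complex-valued $u$ means the cosine coefficients are real and the sine coefficients are purely imaginary; it does \emph{not} kill the $\sin(2k\theta)$ modes. You are implicitly assuming $u$ is real, which is exactly the content of (ii) that remains to be proved. Second, and more seriously, the identification $b_1=-\beta/\pi$ is only asserted: ``a Green/Pohozaev-type integral identity'' is not a proof. The quantity $\beta$ in \eqref{eq:beta_def} is defined through a global variational problem on the half-plane, and nothing in your outer expansion connects the decay coefficient $b_1$ to that infimum.

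The paper closes both gaps at once by \emph{constructing} a solution: it takes the minimizer $w$ of the problem defining $\beta$, extends it by even reflection, and sets $\tilde\psi=e^{i\theta_e/2}(x_1+w)$. This is manifestly of the form $e^{i\theta_e/2}\cdot(\text{real})$, so the sine modes are absent by construction; a Kelvin transform gives the expansion \eqref{eq:w_expansion}, and testing $-\Delta w=0$ against $x_1$ on $D_R^+(0)$ and letting $R\to\infty$ yields $b_1=-\beta/\pi$ directly. By your part (i), any other solution is a scalar multiple of this one, which completes (ii). In short: your route to (i) is fine, but for (ii) you cannot avoid exhibiting one explicit solution tied to the variational definition of $\beta$.
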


Once the uniqueness is established, we can prove that this limit profile provides a good description of the asymptotic behavior of $\varphi_k^a$. In the following theorem, we locally flatten the boundary of $\Omega$ near the boundary point through a conformal transformation (see Section \ref{sec:rectified_boundary} for more details), which allows to work on half-balls. Then we perform a normalized blow up, which converges to the previous profile on the half-space in the limit.

\begin{theorem} \label{theorem:blow_up_almgren}
Let $\Omega\subset\R^2$ open, bounded, simply connected and of class $C^\infty$. Let $p$ satisfy \eqref{eq:weight_assumptions}. 
Suppose that $\varphi_{k}$ has a zero of order $1$ at $b\in\partial\Omega$ (no nodal lines ending at $b$). 

Let $\Phi$ be a conformal map such that $\Phi^{-1}\in C^\infty(\overline{\Omega})$, $\Phi^{-1}(b)=0$ and, for some small $r>0$,
\[
\Phi^{-1}(\Omega)\cap D_r(0)=\{ x\in D_r(0): \, x_1>0 \}=:D_r^+(0).
\]
Then there exists $K>1$ such that, denoting by
\begin{equation*}
a'=(a_1',a_2')=\Phi^{-1}(a), \qquad
\psi_k^a(y)=\frac{\sqrt{Ka'_1}}{\|\varphi_k^a\circ \Phi\|_{L^2(\partial D^+_{Ka'_1}(0,a'_2))} } \varphi_k^a(\Phi(a'_1 y_1, a'_1 y_2 + a'_2))
\end{equation*}
we have
\[
\psi_k^a\to\psi \quad\text{in } H^1_{A_{e},loc}(\R^2_+) \text{ as } a\to b,
\]
where $e = (1,0)$ and $\psi$ is the unique solution to \eqref{eq:limit_profile}-\eqref{eq:limit_profile_normalization} with multiplicative constant $C$ given explicitely in \eqref{eq:constante_normalisation}.
\end{theorem}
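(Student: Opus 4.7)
The plan is to carry out a normalized blow-up at $b$ after the conformal flattening, extract a limit by compactness, and identify it via Proposition~\ref{prop:unique_limit_profile}. Since $\Phi$ is conformal, pulling the eigenvalue problem~\eqref{eq:main_with_weight} back through $\Phi$ yields a magnetic Schr\"odinger problem on $D_r^+(0)$ whose Aharonov--Bohm potential differs from $A_{a'}$ by a smooth gauge (since $\arg(\Phi(w)-\Phi(a'))=\arg(w-a')+\text{smooth}$) and whose weight $\widetilde p=p\circ\Phi\,|\Phi'|^2$ is smooth and positive; the smooth gauge does not affect the spectrum. The affine rescaling $y\mapsto(a'_1 y_1,a'_1 y_2+a'_2)$ then sends $a'$ to $e=(1,0)$ and $\{x_1=0\}$ to $\{y_1=0\}$, producing
\begin{equation*}
(i\nabla+A_e)^2\psi_k^a=(a'_1)^2\lambda_k^a\,\widetilde p(a'_1 y_1,a'_1 y_2+a'_2)\,\psi_k^a
\end{equation*}
on an expanding half-domain. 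A direct change-of-variables computation shows that the chosen prefactor forces $\|\psi_k^a\|^2_{L^2(\partial D_K^+(0))}=K$ uniformly in $a$, which will prevent triviality of the limit.

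For compactness I would apply the Almgren-type frequency formula for magnetic eigenfunctions developed in~\cite{BonnaillieNorisNysTerracini2013,TN1} to $\widetilde\varphi_k^a:=\varphi_k^a\circ\Phi$, centered at $a'$. By Theorem~\ref{theorem:continuity} and the assumption that $\varphi_k$ has a zero of order exactly $1$ at $b$, the perturbed frequency $N(r;\widetilde\varphi_k^a,a')$ is uniformly bounded above as $a\to b$ for $r$ in a compact subrange of $(0,r_0)$. Under the rescaling this produces uniform bounds on $\psi_k^a$ in $H^1_{A_e}(D_R^+(0))$ for every fixed $R>1$. A diagonal extraction then yields $\psi_k^a\rightharpoonup\psi$ in $H^1_{A_e,loc}(\R^2_+)$, with strong $L^2$ convergence on compact sets and on each arc $\partial D_R^+(0)$. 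Since $(a'_1)^2\lambda_k^a\to 0$, the limit solves~\eqref{eq:limit_profile}, it vanishes on $\{y_1=0\}$ by trace continuity, and the normalization $\|\psi\|^2_{L^2(\partial D_K^+(0))}=K$ is inherited. An energy identity combined with strong $L^2$ convergence on circles upgrades the weak convergence to strong $H^1_{A_e,loc}$ convergence.

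The main obstacle is to verify the normalization~\eqref{eq:limit_profile_normalization} for $\psi$, which is what unlocks the uniqueness statement in Proposition~\ref{prop:unique_limit_profile}. The idea is to transfer the frequency information from $\widetilde\varphi_k$ near $b$ to $\psi$ at infinity through the rescaling. Since $b$ is a zero of order one with Dirichlet boundary data, the local expansion of $\widetilde\varphi_k$ at $0$ has leading term $\alpha\, x_1$ for some $\alpha\neq 0$, whence $\lim_{r\to 0^+}N(r;\widetilde\varphi_k,0)=1$. Combining Theorem~\ref{theorem:continuity} with the scaling covariance of $N$ and the strong convergence already established, one obtains $\lim_{r\to+\infty}N(r;\psi,0)=1$, which is precisely~\eqref{eq:limit_profile_normalization}. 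Uniqueness in Proposition~\ref{prop:unique_limit_profile} then pins $\psi$ down up to a multiplicative constant, and matching $\alpha$ with the coefficient of $r\cos\theta$ in part~(ii) of that proposition identifies $C$ as in~\eqref{eq:constante_normalisation}. Since the limit is unique, the whole family $\psi_k^a$ converges, not merely a subsequence.
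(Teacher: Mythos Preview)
Your overall scaffolding is right, but there is a genuine gap in the step that is supposed to give compactness and, more importantly, the normalization~\eqref{eq:limit_profile_normalization}. You assert that the frequency $N(r;\widetilde\varphi_k^a,\cdot)$ is uniformly bounded for $r$ in a \emph{compact} subrange of $(0,r_0)$, but after rescaling this only gives control of $N(R;\psi_k^a,0)$ for $R\in[c/a_1',r_0/a_1']$, a window sliding off to infinity; it yields no bound for any fixed $R$, hence no $H^1_{A_e}(D_R^+(0))$ compactness and no information about $\lim_{R\to\infty}N(R;\psi,0)$. The difficulty is that the magnetic frequency centered on the boundary is \emph{not} monotone: the Pohozaev identity produces a remainder $M_a$ coming from the pole (see Lemmas~\ref{lemma:pohozaev}--\ref{lemma:derivativeofE}), so one cannot simply propagate the bound $N(r_\eps;\widetilde\varphi_k^a)\le 1+\eps/2$ down to scales $r\sim a_1'$. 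The paper handles this by proving $|M_a|/H(\varphi^a,ka_1,\pi(a))\le C/k^2$ (Lemmas~\ref{lemma:M_a_bound}--\ref{lemma:M_a/H}) and integrating the resulting differential inequality (Lemma~\ref{lemma:boundN1}); this yields $N(\varphi^a,r,\pi(a),\lambda^a,A_a)\le 1+\eps$ uniformly on the full intermediate range $k_\eps a_1<r<r_\eps$, which is exactly what the blow-up needs.

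Your argument for~\eqref{eq:limit_profile_normalization} has the same diagonal-limit problem: from $\lim_{r\to0}N(r;\widetilde\varphi_k,0)=1$ and the convergence $\widetilde\varphi_k^a\to\widetilde\varphi_k$ on fixed balls one cannot conclude $\lim_{R\to\infty}N(R;\psi,0)=1$, because the two limits ($a\to b$ and $R\to\infty$) do not commute without the uniform control above. In the paper this is resolved by a separate blow-down step: one shows (Lemma~\ref{lemma:d_limit}) that $d:=\lim_{R\to\infty}N(R;\psi,0)$ exists, then rescales $w_R(x)=\psi(Rx)/\sqrt{H(\psi,R,0)}$ and proves that $e^{-i\theta_{e/R}/2}w_R$ converges to a harmonic function on $\R^2_+$ with constant frequency $d$, forcing $w=Cr^d\cos(d\theta)$ with $d$ an odd positive integer; the bound $d\le 1+\eps$ then gives $d=1$. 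You should either reproduce this two-scale analysis or explain how you intend to control the non-monotone frequency across the full range $[k_\eps a_1',r_\eps]$.
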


The previous two results are obtained by exploiting an Almgren-type frequency formula \cite{AlmgrenBook,Almgren} for magnetic eigenfunctions, see Definition \ref{def:EHN}. This tool has been introduced in the context of magnetic operators in \cite{FFT2011} to obtain, among other results, sharp regularity results for Aharonov-Bohm eigenfunctions. 

The asymptotic analysis above allows us to prove the last main result of the paper.

\begin{theorem} \label{theorem:no_nodal_lines}
Let $\Omega\subset\R^2$ open, bounded, simply connected and of class $C^\infty$. Let $p$ satisfy \eqref{eq:weight_assumptions}.
Suppose that $\lambda_{k}$ is simple and that $\varphi_{k}$ has a zero of order $1$ at $b\in\partial\Omega$ (no nodal lines ending at $b$). Then there exists $C > 0$, not depending on $a$, such that
\begin{align}\label{eq:no_nodal_lines}
\lambda_{k}^a \geq \lambda_{k} + C \left(\text{dist}(a,\partial \Omega)\right)^2 \quad \text{as } a\to b.
\end{align}
\end{theorem}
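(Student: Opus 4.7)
The plan is to quantify the convergence $\lambda_k^a\to\lambda_k$ of Theorem~\ref{theorem:continuity} by coupling an integral identity between $\varphi_k^a$ and $\varphi_k$ with the blow-up analysis of Theorem~\ref{theorem:blow_up_almgren} and the explicit form of the limit profile from Proposition~\ref{prop:unique_limit_profile}. First I would test the magnetic equation $(i\nabla+A_a)^2\varphi_k^a=\lambda_k^a\,p\,\varphi_k^a$ against the real Laplacian eigenfunction $\varphi_k$ and integrate by parts on $\Omega\setminus D_\eta(a)$. The boundary terms on $\partial\Omega$ vanish by the Dirichlet condition, while those on $\partial D_\eta(a)$ vanish as $\eta\to 0$ thanks to the Aharonov--Bohm behaviour $\varphi_k^a(x)=O(|x-a|^{1/2})$ near the pole. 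Combined with $\nabla\cdot A_a\equiv 0$ on $\Omega\setminus\{a\}$ and $-\Delta\varphi_k=\lambda_k\,p\,\varphi_k$, this yields
\begin{equation*}
(\lambda_k^a-\lambda_k)\int_\Omega p\,\varphi_k^a\,\varphi_k\dx=\int_\Omega\bigl(2iA_a\cdot\nabla\varphi_k^a+|A_a|^2\varphi_k^a\bigr)\varphi_k\dx.
\end{equation*}

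I would then fix the global phase of $\varphi_k^a$ via the gauge decomposition $\varphi_k^a=e^{i\theta_a/2}w_a$, with $\theta_a(x)=\arg(x-a)$ and $w_a$ the single-valued function on the associated cut domain. Simplicity of $\lambda_k$ and Theorem~\ref{theorem:continuity} give $w_a\to\varphi_k$ in $H^1_{\mathrm{loc}}(\overline\Omega\setminus\{b\})$ under this gauge, so the left-hand pairing has a nonzero limit. The bulk of the argument then consists in extracting from the right-hand side a leading contribution of order $(a_1')^2$, with $a_1'=(\Phi^{-1}(a))_1\simeq\mathrm{dist}(a,\partial\Omega)$, and in showing that its coefficient is strictly positive. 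To this end, after flattening the boundary via the conformal map $\Phi$ of Theorem~\ref{theorem:blow_up_almgren}, I would split $\Omega$ into the inner half-disk $\Phi(D_{Ka_1'}^+(0,a_2'))$ and its complement. On the inner region, the change of variables $y=(\Phi^{-1}(\cdot)-(0,a_2'))/a_1'$ together with $\psi_k^a\to\psi$ from Theorem~\ref{theorem:blow_up_almgren}, the expansion $\varphi_k\circ\Phi(a_1'y_1,a_1'y_2+a_2')=C_0\,a_1'\,y_1+O((a_1')^2)$ (with $C_0=\partial_{x_1}(\varphi_k\circ\Phi)(0)\neq 0$ by the simple-zero assumption), the rescaling $A_a(\Phi(\cdot))=A_e(y)/a_1'+O(1)$, and the $(a_1')^2$ area element produce a term of order $(a_1')^2$ with universal prefactor. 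On the outer region, the convergence $w_a\to\varphi_k$ combined with the divergence-free identity $\int A_b\cdot\nabla(\varphi_k^2)\dx=0$ shows that the two summands on the right-hand side cancel up to order $(a_1')^2$. An intermediate-scale matching driven by the expansion $\psi(r,\theta)=Ce^{i\theta_e/2}\bigl(r\cos\theta-\tfrac{\beta}{\pi}\tfrac{\cos\theta}{r}+O(r^{-3})\bigr)$ of Proposition~\ref{prop:unique_limit_profile}(ii) identifies the surviving coefficient as a positive multiple of $\beta\,|\nabla(\varphi_k\circ\Phi)(0)|^2$, strictly positive thanks to $\beta>0$ (cf.\ \eqref{eq:beta_def}) and the simple-zero hypothesis on $\varphi_k$.

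The main obstacle will be making the inner/outer matching rigorous and controlling the cancellations between the two summands on the right-hand side. Each summand is individually of order one---$\int|A_b|^2\varphi_k^2\dx$ is finite and nonzero, and the first summand concentrates near $a$ in a way that depends on the position-dependent phase of $\varphi_k^a$---yet their sum must vanish at the rate $(a_1')^2$. Disentangling this cancellation requires using not only the leading $r\cos\theta$ part but also the subleading $\beta$-correction in the expansion of $\psi$, and controlling the error via the $H^1_{A_e,\mathrm{loc}}$ convergence in Theorem~\ref{theorem:blow_up_almgren}; this is the technical heart of the argument and is where the uniqueness and explicit form of the limit profile from Proposition~\ref{prop:unique_limit_profile} play their essential role.
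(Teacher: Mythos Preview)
Your route is genuinely different from the paper's. The paper argues variationally: it builds a $k$-dimensional subspace of $H^1_0(\Omega)$ by setting $v_i^{ext}=e^{-i\theta_a/2}\varphi_i^a$ outside a half-disc $D^+_{Ka_1}(\pi(a))$ and replacing it inside by the harmonic extension $v_i^{int}$ of the same boundary trace, then applies the min-max characterization to bound $\lambda_k$ from above by the top eigenvalue of a $k\times k$ matrix. All the information is concentrated in the single boundary integral $\int_{\partial D^+_{Ka_1}}\nabla(v_k^{int}-v_k^{ext})\cdot\nu\,v_k\,d\sigma$; after rescaling and invoking the $H^1_{A_e}$-convergence of Theorem~\ref{theorem:blow_up_almgren} and the explicit series of Proposition~\ref{prop:unique_limit_profile}(ii), this integral equals $-\beta+O(K^{-2})$ times $H(\varphi_k^a,Ka_1,\pi(a))\sim a_1^2$ (the latter by Lemma~\ref{lemma:H_ka}). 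A perturbative matrix computation in the spirit of Lemma~\ref{lemma:greatesteigenvalue2} then gives $\lambda_k\le\lambda_k^a-C_k a_1^2+o(a_1^2)$. The point is that the min-max localises the comparison to one boundary term from the start, so no bulk cancellations are needed.

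Your identity, by contrast, forces exactly such cancellations, and this is where a real gap lies. Both sides of your identity are complex: $\varphi_k^a$ carries the phase $e^{i\theta_a/2}$, so $\int p\,\varphi_k^a\varphi_k$ and the right-hand side are complex numbers whose ratio happens to be real; to get a \emph{sign}, you must control the phases, not just the moduli. More seriously, in the gauge $\varphi_k^a=e^{i\theta_a/2}w_a$ the right-hand integrand becomes $e^{i\theta_a/2}\bigl(2iA_a\cdot\nabla w_a-|A_a|^2 w_a\bigr)\varphi_k$, and even with $w_a\to\varphi_k$, each of $\int e^{i\theta_a/2}|A_a|^2\varphi_k^2$ and $\int e^{i\theta_a/2}A_a\cdot\nabla(\varphi_k^2)$ has a nonzero $O(1)$ limit as $a\to b$. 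The divergence-free identity $\int A_b\cdot\nabla(\varphi_k^2)=0$ that you invoke for the outer cancellation is destroyed by the factor $e^{i\theta_a/2}$, so the mechanism you propose does not apply. Extracting the $\beta$-coefficient from this setup would require an expansion of $\varphi_k^a$ accurate to order $a_1'$ (relative to the leading term) on \emph{all} of $\Omega$, with matched errors across scales; neither Theorem~\ref{theorem:blow_up_almgren} (a statement at the single scale $Ka_1'$) nor the $H^1$-convergence $w_a\to\varphi_k$ (a zeroth-order statement) supplies this. The paper's competitor construction sidesteps the issue entirely.
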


\begin{remark}\label{rem:three_remarks}
(i) The behavior as $a\to b\in\partial\Omega$, $b$ being the endpoint of one or more nodal lines of $\varphi_k$, but $a$ not belonging to any such nodal line, remains an open problem. \\
(ii) The assumption of regularity of $\Omega$ can be weakened: it is enough to have $\partial\Omega\in C^{2,\gamma}$ for some $\gamma>0$, see Remark \ref{rem:regularity_Omega}. \\
(iii) If $\Omega$ presents a conical singularity, estimates \eqref{eq:at_least_one_nodal_line} and \eqref{eq:no_nodal_lines} do not hold at the vertex of the cone. This can be observed in the numerical simulations: we see in Figure \ref{fig.lambda_2and3_a} that the curve $a\mapsto\lambda_k^a$ is flat as $a$ approaches the acute angle of the circular sector. Concerning for example $\lambda_2^a$, we see that relation \eqref{eq:no_nodal_lines} does not hold, despite the absence of nodal lines of $\varphi_2$. We treat this topic in Appendix \ref{appendix:cone}. If we particularize Theorem \ref{theorem:cone_at_least_one_nodal_line} to the case of the second eigenvalue in the cone of aperture $\pi/4$, we obtain for example
\[
\lambda_2^a \geq \lambda_2+C|a|^8,
\]
as $a$ moves along the angle bisector.
\end{remark}

The paper is organized as follows. 
In Section \ref{sec:preliminaries} we define two function spaces related with the magnetic operator, $\mathcal{D}^{1,2}_{A_a}(\Omega)$ and $H^{1}_{A_a}(\Omega)$, respectively of functions with and without zero boundary conditions on $\partial \Omega$. We also recall a Hardy-type inequality and the asymptotic behavior of the eigenfunctions $\varphi_k$ and $\varphi_k^a$ around zeros of order $h/2$. 
In Section \ref{sec:rectified_boundary} we use a Riemann mapping theorem in  order to flatten locally the boundary $\partial \Omega$ around the point $0$. By doing this, we recover an equation similar to \eqref{eq:main_with_weight}, the new weight verifying the same assumptions as the old one, see \eqref{eq:weight_assumptions}, thanks to the regularity hypothesis on $\partial \Omega$. Then, we prove some Poincaré-type inequalities in half balls. 
In Section \ref{sec:pole_approaching_nodal} we prove Theorem \ref{theorem:at_least_one_nodal_line}. 
In Section \ref{sec:freq_formula} we introduce the Almgren's function for the eigenfunctions $\varphi_k^a$ and we study some properties of this object. 
In Section \ref{sec:prop_limit_profile} we prove Proposition \ref{prop:unique_limit_profile} and
in Section \ref{sec:pole_approaching_not_nodal} we prove Theorem \ref{theorem:no_nodal_lines}. 
In Appendix \ref{appendix:cone} we treat the case of a Lipschitz domain with isolated conical singularities.
Finally, in Appendix \ref{appendx:green} we prove an estimate on the Green's function for a perturbed Laplace operator.

\section{Preliminaries}\label{sec:preliminaries}

We introduce two function spaces related with the magnetic operator, one with vanishing conditions on $\partial \Omega$ and the other without any boundary conditions. We will denote them respectively by $\mathcal{D}^{1,2}_{A_a}(\Omega)$ and $H^{1}_{A_a}(\Omega)$. The first space $\mathcal{D}^{1,2}_{A_a}(\Omega)$ is defined as the completion of $C^{\infty}_{0}(\Omega \backslash \{a\},\C)$ with respect to the norm
\[
\| u \|_{\mathcal{D}^{1,2}_{A_a}(\Omega)} := \| (i \nabla + A_{a}) u \|_{L^{2}(\Omega)},
\]
while the second space $H^1_{A_a}(\Omega)$ is defined as the completion of the set
\begin{align*}
\left\{ u  \in C^{\infty}(\Omega,\C) \, , \, u \text{ vanishes in a neighborhood of } a \right\}
\end{align*}
with respect to the norm
\begin{align*}
\| u \|_{H^1_{A_a}(\Omega)} := \left( \| (i \nabla + A_a) u \|_{L^2(\Omega)}^2 + \| u \|_{L^2(\Omega)}^2 \right)^{1/2}.
\end{align*}
As proved for example in \cite[Lemma 2.1]{TN1}, we have the equivalent characterizations
\begin{align*} 
\mathcal{D}^{1,2}_{A_a}(\Omega)  = \left\{ u \in H^{1}_{0}(\Omega)  \, : \, \frac{u}{|x-a|} \in L^{2}(\Omega) \right\}
\end{align*}
and
\begin{align*}
H^{1}_{A_a}(\Omega)  = \left\{ u \in H^{1}(\Omega)  \, : \, \frac{u}{|x-a|} \in L^{2}(\Omega) \right\},
\end{align*}
and moreover we have that $\mathcal{D}^{1,2}_{A_a}(\Omega)$ (respectively $H^1_{A_a}(\Omega)$) is continuously embedded in $H^1_0(\Omega)$ (respectively $H^1(\Omega)$) : there exists a constant $C>0$ such that for every $u \in \mathcal{D}^{1,2}_{A_a}(\Omega)$ and $u \in H^1_{A_a}(\Omega)$ we have
\begin{equation}\label{eq:H_1_A_a_characterization}
\|u\|_{H_0^1(\Omega)}\leq C \|u\|_{\mathcal{D}^{1,2}_{A_a}(\Omega) } \quad \text{ and } \quad \|u\|_{H^1(\Omega)}\leq C \|u\|_{H^{1}_{A_a}(\Omega) }.
\end{equation}
This is proved by making use of a Hardy-type inequality, which was obtained by Laptev and Weidl \cite{LaWe} for functions in $\mathcal{D}^{1,2}_{A_a}(\Omega)$ and has been extended to functions in $H^{1}_{A_a}(\Omega)$ in \cite[Lemma 7.4]{MOR} (see also \cite{MORerratum}). If $\Omega$ is simply connected and of class $C^\infty$, there exists a constant $C>0$ such that for every $u \in H^1_{A_a}(\Omega)$ the following holds
\begin{equation}\label{eq:hardy}
\left\|\frac{u}{|x-a|}\right\|_{L^2(\Omega)} \leq C 
\| (i \nabla + A_{a}) u \|_{L^{2}(\Omega)}.
\end{equation}
The constant $C$ in \eqref{eq:H_1_A_a_characterization} and \eqref{eq:hardy} depends only on the circulation of the magnetic potential $A_a$ and remains finite whenever the circulation is not an integer.

As a consequence of the continuous embedding, we have the following.
\begin{lemma} \label{lemma:compactnessinversemagneticlaplacian}
Let $Im$ be the compact immersion of $\mathcal{D}^{1,2}_{A_a}(\Omega) $ into $(\mathcal{D}^{1,2}_{A_a}(\Omega) )^{\prime}$. Then, the operator $((i \nabla + A_{a})^{2})^{-1} \circ Im : \mathcal{D}^{1,2}_{A_a}(\Omega) \rightarrow \mathcal{D}^{1,2}_{A_a}(\Omega) $ is compact.
\end{lemma}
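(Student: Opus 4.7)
The plan is to decompose the operator $((i\nabla+A_a)^{2})^{-1}\circ Im$ as the composition of a compact immersion and a bounded inverse, relying on the standard Hilbert-space variational setup. First I would verify that the sesquilinear form
\[
B(u,v)=\int_{\Omega} (i\nabla+A_a)u\cdot \overline{(i\nabla+A_a)v}\dx
\]
is continuous, Hermitian, and coercive on $\mathcal{D}^{1,2}_{A_a}(\Omega)$: this is immediate because $B(u,u)=\|u\|^{2}_{\mathcal{D}^{1,2}_{A_a}(\Omega)}$ by the very definition of the norm on this completion. By the Lax--Milgram theorem, for every $F\in(\mathcal{D}^{1,2}_{A_a}(\Omega))'$ there exists a unique $u\in\mathcal{D}^{1,2}_{A_a}(\Omega)$ with $B(u,v)=\langle F,v\rangle$ for all test functions $v$; this identifies $((i\nabla+A_a)^{2})^{-1}$ as a bounded isomorphism from $(\mathcal{D}^{1,2}_{A_a}(\Omega))'$ onto $\mathcal{D}^{1,2}_{A_a}(\Omega)$.

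Next I would establish compactness of $Im$. The immersion of $\mathcal{D}^{1,2}_{A_a}(\Omega)$ into its dual is realized concretely through the $L^{2}$ pairing, so the plan is to factor
\[
\mathcal{D}^{1,2}_{A_a}(\Omega)\hookrightarrow H^{1}_{0}(\Omega)\hookrightarrow L^{2}(\Omega)\hookrightarrow (\mathcal{D}^{1,2}_{A_a}(\Omega))'.
\]
The first arrow is continuous by the embedding estimate \eqref{eq:H_1_A_a_characterization}, which itself follows from the Hardy-type inequality \eqref{eq:hardy}. The second arrow is compact by the Rellich--Kondrachov theorem, using that $\Omega$ is open and bounded. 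The third arrow is continuous by duality (it is the adjoint of the continuous embedding $\mathcal{D}^{1,2}_{A_a}(\Omega)\hookrightarrow L^{2}(\Omega)$). Composing a continuous, a compact and a continuous map yields a compact operator, so $Im$ is compact.

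Finally, since the composition of a compact operator with a bounded operator is compact, the map $((i\nabla+A_a)^{2})^{-1}\circ Im:\mathcal{D}^{1,2}_{A_a}(\Omega)\to \mathcal{D}^{1,2}_{A_a}(\Omega)$ is compact, which concludes the argument.

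The main technical ingredient in this plan is the continuous inclusion $\mathcal{D}^{1,2}_{A_a}(\Omega)\hookrightarrow H^{1}_{0}(\Omega)$; all the rest is bookkeeping. That inclusion is not automatic because the magnetic norm controls only the combination $(i\nabla+A_a)u$ and $A_a$ is singular at $a$, so the Hardy-type inequality \eqref{eq:hardy}, whose validity requires the half-integer circulation hypothesis built into the definition of $A_a$, is the load-bearing step. As this is already stated in the preliminaries I would simply invoke it; the only place I would want to be careful is in checking that the constant in \eqref{eq:hardy} is finite in the present setting, which is guaranteed since the circulation of $A_a$ equals $1/2\notin \mathbb{Z}$.
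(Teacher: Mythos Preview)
Your proposal is correct and matches the paper's approach: the paper does not give a detailed proof but simply states the lemma ``as a consequence of the continuous embedding'' $\mathcal{D}^{1,2}_{A_a}(\Omega)\hookrightarrow H^1_0(\Omega)$, and your argument spells out exactly the standard factorization (Lax--Milgram for the bounded inverse, Rellich--Kondrachov for compactness of $Im$) that this remark is gesturing at.
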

As $((i \nabla + A_{a})^{2})^{-1}$ is also self-adjoint and positive in $(\mathcal{D}^{1,2}_{A_a}(\Omega))^{\prime} $, we deduce that the spectrum of $(i \nabla + A_{a})^{2}$ in $\mathcal{D}^{1,2}_{A_a}(\Omega) $ consists of a diverging sequences of real positive eigenvalues, having finite multiplicity. 

We recall a result in \cite{HHHO1999} stating that the magnetic operator \eqref{eq:magnetic_operator_def} is equivalent to the standard Laplacian in the double covering.
\begin{lemma}[{\cite[Lemma 3.3]{HHHO1999}}]\label{lemma:gauge_invariance}
Suppose that $A_a$ has the form \eqref{eq:magnetic_potential_definition}. Then the function
\[
e^{-i\theta(y)}\varphi_j^a(y^2+a) \quad \text{defined in } \{y\in\C:\ y^2+a\in\Omega\},
\]
(here $\theta$ is the angle of the polar coordinates) is real valued and solves the following equation on its domain
\[
-\Delta(e^{-i\theta(y)}\varphi_j^a(y^2+a))=\lambda_j^a p'(y) e^{-i\theta(y)}\varphi_j^a(y^2+a), \qquad p'(y)=4|y|^2 p(y^2+a) .
\] 
\end{lemma}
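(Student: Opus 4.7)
The plan is to realize the magnetic operator $(i\nabla+A_a)^2$ as the ordinary Laplacian after a combined \emph{gauge transformation} and \emph{change of variable} to the branched double cover of $\Omega$ ramified at $a$. The key observation is that $A_a$ in \eqref{eq:magnetic_potential_definition} is nothing else than $\tfrac12 \nabla_x\theta_a(x)$, where $\theta_a$ is the (multi-valued) polar angle of $x-a$; the half-integer circulation of $A_a$ around $a$ is precisely the obstruction to gauging $A_a$ away on $\Omega\setminus\{a\}$. Passing to the cover via $\pi:y\mapsto y^2+a$ resolves this obstruction because $\theta_a(\pi(y))=2\theta(y)$, so $\theta(y)$ provides a single-valued lift of $\theta_a/2$ on the $y$-plane (minus the branch $\{y=0\}$).

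Concretely, I would proceed in three short steps. First, recall the elementary gauge identity
\[
(i\nabla_x+A_a)(e^{i\phi}u)=e^{i\phi}\bigl(i\nabla_x+A_a-\nabla_x\phi\bigr)u,
\]
which, applied formally with $\phi=\theta_a/2$, yields $(i\nabla_x+A_a)^2\varphi_j^a = e^{i\theta_a/2}(-\Delta_x)(e^{-i\theta_a/2}\varphi_j^a)$. Second, since $\pi$ is holomorphic with Jacobian $|2y|^2=4|y|^2$, the Laplacian transforms by pullback as $\Delta_x(u\circ\pi^{-1})=(4|y|^2)^{-1}\Delta_y u$ on the cover, so that the eigenvalue equation becomes
\[
-\Delta_y\bigl(e^{-i\theta(y)}\varphi_j^a(y^2+a)\bigr)=\lambda_j^a\,4|y|^2\,p(y^2+a)\,e^{-i\theta(y)}\varphi_j^a(y^2+a),
\]
which is the advertised identity with $p'(y)=4|y|^2 p(y^2+a)$. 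Third, for the real-valuedness, observe that the deck transformation $y\mapsto -y$ satisfies $\theta(-y)=\theta(y)+\pi$ and $\pi(-y)=\pi(y)$; hence the lifted function $\tilde v(y):=e^{-i\theta(y)}\varphi_j^a(y^2+a)$ is odd, $\tilde v(-y)=-\tilde v(y)$. The equation for $\tilde v$ is a real elliptic eigenvalue problem with Dirichlet boundary conditions on $\pi^{-1}(\partial\Omega)$ and odd symmetry; in this antisymmetric subspace complex conjugation commutes with the operator, so the eigenfunction $\tilde v$ can be selected real by fixing the residual $S^1$-phase freedom in $\varphi_j^a$.

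The only delicate point is ensuring that the formal manipulations above are justified in the relevant function space. The gauge phase $e^{-i\theta(y)}$ is only locally defined near $y=0$, but because $\varphi_j^a$ lies in $\mathcal{D}^{1,2}_{A_a}(\Omega)$, the Hardy-type inequality \eqref{eq:hardy} guarantees that $\varphi_j^a(y^2+a)/|y|\in L^2$ on the lifted domain, which is exactly what is needed for $\tilde v\in H^1$ across the ramification point; then standard elliptic regularity promotes $\tilde v$ to a classical solution. I expect this compatibility check on the branch point $y=0$ to be the main obstacle, while all the rest reduces to the two routine calculations in paragraph two.
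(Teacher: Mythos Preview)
The paper does not supply a proof of this lemma; it is quoted verbatim from \cite[Lemma 3.3]{HHHO1999} and used as a black box. Your outline is the standard argument and is correct: the identity $A_a=\tfrac12\nabla\theta_a$ together with the conformal pullback under $y\mapsto y^2+a$ (which makes $\theta_a/2$ single-valued as $\theta(y)$ and multiplies the Laplacian by $4|y|^2$) gives the equation, and the reality of the coefficients on the double cover plus the odd parity under $y\mapsto -y$ force $\tilde v$ to be real once the free $S^1$-phase of $\varphi_j^a$ is fixed, exactly as you say. One small clarification worth making explicit in a write-up: the assertion ``is real valued'' in the lemma should indeed be read as ``can be chosen real valued''; this is how the paper uses it (see the real coefficients $c_h,d_h$ in Proposition~\ref{proposition:asymptotic_expansion_eigenfunction}).
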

As a consequence, the magnetic eigenfunctions behave, up to a complex phase, as the laplacian eigenfunctions far from the singular point $a$. The behavior near $a$  is, up to a complex phase, that of the square root of an elliptic eigenfunction. Since we are interested in the shape of the nodal lines, let us first recall the known results concerning the elliptic eigenfunctions in the plane.

\begin{proposition}[{\cite[equation (5'')]{HW}, \cite[Theorem 2.1]{HHT2009}}]\label{proposition:asymptotic_laplacian_eigenfunctions}
Let $\Omega\subset\R^2$ open, bounded, simply connected and of class $C^\infty$. Let $p$ satisfy \eqref{eq:weight_assumptions}.
If $\varphi_j$ has a zero of order $h/2$ at $0\in\overline{\Omega}$, then $h$ is even and we have
\[
\varphi_j(r,\theta)=r^{h/2} \left[ c_h\cos\left(\frac{h}{2}\theta\right) + d_h \sin\left(\frac{h}{2}\theta\right) \right] + g(r,\theta),
\]
with $x=re^{i\theta}\in\Omega$, $c_h^2+d_h^2\neq0$ and
\begin{equation}\label{eq:g_remainder}
\lim_{r\to0} \frac{\|g(r,\cdot)\|_{C^1(\partial D_r)}}{r^{h/2}}=0.
\end{equation}
In addition, there is a positive radius $R$ such that 
\begin{itemize}
\item[(i)] if $0\in\Omega$ then $(\varphi_j)^{-1}(\{0\})\cap D_R(0)$ consists of $h$ arcs of class $C^\infty$, whose tangent lines divide the disk into $h$ equal sectors;
\item[(ii)] if $0\in\partial\Omega$ then $(\varphi_j)^{-1}(\{0\})\cap D_R(0)\cap\Omega$ consists of $h/2-1$ arcs of class $C^\infty$, whose tangent lines divide the half disk into $h/2$ equal sectors.
\end{itemize}
\end{proposition}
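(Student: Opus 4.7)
The plan is to combine classical elliptic regularity, a Taylor expansion argument à la Hartman--Wintner to isolate the leading harmonic polynomial, and the implicit function theorem applied at each simple angular zero of that polynomial to obtain the nodal portrait. Since $p\in C^\infty(\overline\Omega)$ and, in case (ii), $\partial\Omega\in C^\infty$ with Dirichlet condition, interior and boundary Schauder estimates yield $\varphi_j\in C^\infty$ in a (half-)neighborhood of $0$. Writing the Taylor decomposition $\varphi_j(x)=\sum_{k\ge h/2}Q_k(x)$ with $Q_k$ homogeneous of degree $k$ and $Q_{h/2}\not\equiv0$ (which forces $h$ to be even), I insert into $-\Delta\varphi_j=\lambda_j p\varphi_j$ and match orders: the right-hand side vanishes to order $h/2$ at $0$, so the lowest contribution on the left, $\Delta Q_{h/2}$ of degree $h/2-2$, must vanish. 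Homogeneous harmonic polynomials of degree $n=h/2$ on $\R^2$ span the two-dimensional space generated by $\operatorname{Re}(z^n)$ and $\operatorname{Im}(z^n)$, so
\[
Q_{h/2}(r,\theta)=r^{h/2}\left[c_h\cos(h\theta/2)+d_h\sin(h\theta/2)\right], \qquad (c_h,d_h)\neq(0,0).
\]
Setting $g:=\varphi_j-Q_{h/2}=\sum_{k>h/2}Q_k$, Taylor's theorem applied term by term (in polar coordinates, where $\theta$ is the tangential variable on $\partial D_r$) gives $|g(r,\theta)|+|\partial_\theta g(r,\theta)|\le Cr^{h/2+1}$, hence $\|g(r,\cdot)\|_{C^1(\partial D_r)}/r^{h/2}=O(r)\to0$, establishing \eqref{eq:g_remainder}.

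For case (ii) I would pick a $C^\infty$ chart $\Psi$ sending a neighborhood of $0$ in $\overline\Omega$ onto a half-disk $D_R^+=D_R(0)\cap\{y_1>0\}$ with $\Psi(0)=0$, $\Psi(\partial\Omega\cap U)=\{y_1=0\}$, and $D\Psi(0)$ orthogonal. The pullback $\tilde\varphi_j:=\varphi_j\circ\Psi^{-1}$ then satisfies a uniformly elliptic equation on $D_R^+$ with $C^\infty$ coefficients and Dirichlet datum on $\{y_1=0\}$. Rather than reflect the coefficients (which would require Bers' theorem for Lipschitz coefficients), I apply the interior argument directly: $\tilde\varphi_j\in C^\infty(\overline{D_R^+})$ by boundary Schauder, so its Taylor expansion at $0$ exists, the leading $Q_{h/2}$ is again a harmonic polynomial by the matching-of-orders argument, and the Dirichlet condition $Q_{h/2}(0,y_2)\equiv0$ forces the coefficient of $\operatorname{Re}(z^{h/2})$ to vanish (in the flattened coordinates), leaving $Q_{h/2}(r,\theta)=d\,r^{h/2}\sin(h\theta/2)$. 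Since $D\Psi(0)$ is orthogonal, pulling back to the original polar coordinates centered at $0$ restores the general expression $r^{h/2}[c_h\cos(h\theta/2)+d_h\sin(h\theta/2)]$, with $(c_h,d_h)$ constrained so that the leading polynomial vanishes on the tangent line to $\partial\Omega$ at $0$; the remainder estimate transfers because $\Psi$ and $\Psi^{-1}$ are $C^\infty$.

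The nodal portrait then follows from an analysis of $f(\theta):=c_h\cos(h\theta/2)+d_h\sin(h\theta/2)$. Every zero is simple: if $f(\theta_0)=f'(\theta_0)=0$, the $2\times2$ linear system in $(c_h,d_h)$ has determinant $\cos^2(h\theta_0/2)+\sin^2(h\theta_0/2)=1$, forcing $(c_h,d_h)=0$, contradicting $(c_h,d_h)\neq(0,0)$. Hence $f$ has exactly $h$ equally spaced simple zeros on $[0,2\pi)$ in case (i); in case (ii), once the tangent direction to $\partial\Omega$ is accounted for, exactly $h/2-1$ zeros remain strictly inside the half-disk. Now set $F(r,\theta):=\varphi_j(r\cos\theta,r\sin\theta)/r^{h/2}$; the Taylor expansion shows that $F$ extends to a $C^1$ function on $[0,\varepsilon)\times S^1$ with $F(0,\cdot)=f$ and $\partial_\theta F(0,\cdot)=f'$. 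At each simple zero $\theta_0$ of $f$ we have $F(0,\theta_0)=0$ and $\partial_\theta F(0,\theta_0)=f'(\theta_0)\neq0$, so the implicit function theorem yields a $C^\infty$ curve $\theta=\theta(r)$, $\theta(0)=\theta_0$, tangent at the origin to the ray $\theta=\theta_0$. Outside arbitrarily small $\theta$-neighborhoods of these zeros, $|f|$ is bounded below while the perturbation is $O(r)$, so $F$ does not vanish there for small $r$; this accounts for all local nodal points. Unwinding the polar coordinates (and, in case (ii), the chart $\Psi$) produces the claimed $C^\infty$ nodal arcs with equidistributed tangent directions.

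The most delicate step is the $C^1$ regularity of $F$ up to $r=0$: one must justify that the formal series $F=f(\theta)+rf_1(\theta)+r^2 f_2(\theta)+\cdots$ can be differentiated once in $(r,\theta)$ with the remainder after the first two terms being $O(r^2)$ with $O(r)$ derivative in $r$ and $O(r^2)$ derivative in $\theta$; this follows from the smooth Taylor expansion but is notationally finicky. The secondary pitfall is the boundary reduction, which I bypass as above by invoking boundary Schauder regularity and the Dirichlet trace directly, thereby avoiding any need to extend the pulled-back coefficients across $\{y_1=0\}$.
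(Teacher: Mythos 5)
Your proof is essentially correct, but it follows a genuinely different route from the paper's. The paper does not reprove this statement: the interior expansion and the nodal picture (i) are quoted from \cite{HW} and \cite[Theorem 2.1]{HHT2009}, and case (ii) is obtained from (i) by rectifying $\partial\Omega$ locally with the conformal map of Lemma \ref{lemma:Riemannmapping} (which keeps the equation in the form $-\Delta w=\lambda p' w$ with a weight still satisfying \eqref{eq:weight_assumptions}) and then performing an odd reflection across the flattened boundary, so that the boundary zero becomes an interior zero and (i) applies; this exploits conformal invariance of the Laplacian and the fact that the quoted Hartman--Wintner-type results tolerate the reflected weight, which is only Lipschitz across $\{x_1=0\}$. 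You instead give a self-contained argument (Taylor expansion and matching of orders to identify the leading harmonic, $C^1$ control of $F(r,\theta)=\varphi_j/r^{h/2}$, implicit function theorem at the simple angular zeros), and in the boundary case you avoid reflection by using boundary Schauder regularity in a flattening chart with orthogonal differential at $0$ and imposing the Dirichlet trace on the leading harmonic. The paper's reduction buys brevity and a constant-coefficient principal part; your route buys independence from the reflection step and from the cited references, at the price of two checks you should make explicit: that the frozen-coefficient matching still gives $\Delta Q_{h/2}=0$ in the chart (it does, since the coefficients are $\delta_{ij}+O(|y|)$, so the extra terms have degree at least $h/2-1$), and that the zeros of the angular factor at $\theta=\pm\pi/2$ are accounted for by the boundary itself (uniqueness in the implicit function theorem, or monotonicity of $F$ in $\theta$ near $\pm\pi/2$), so that no interior nodal arc emanates tangentially to $\partial\Omega$.

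Two small repairs: in the half-plane $\{y_1>0\}$ the harmonic $r^{h/2}\sin(h\theta/2)$ vanishes on $\{y_1=0\}$ only when $h/2$ is even; for $h/2$ odd the surviving harmonic is $r^{h/2}\cos(h\theta/2)$, the invariant statement being that it is a multiple of $r^{h/2}\sin\left(\frac{h}{2}\left(\theta-\frac{\pi}{2}\right)\right)$, which does not change the count of $h/2-1$ interior zero directions. Also, since $\varphi_j$ is smooth but not assumed analytic, the decomposition $\sum_{k\geq h/2}Q_k$ must be used as a finite Taylor expansion (say to order $h/2+1$) with a $C^1$-controlled remainder, not as a convergent series; this is exactly the point you flag as delicate, and it is what yields \eqref{eq:g_remainder} with the $C^1(\partial D_r)$ norm taken in the angular variable.
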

The behavior near the boundary at point (ii) above can be deduced from point (i). Indeed, since the boundary is regular, it can be locally rectified as described in Lemma \ref{lemma:Riemannmapping} below. By performing an odd extension, we transform the boundary point into an interior point of type (i).

We summarize below the local properties of the magnetic eigenfunction near the pole. The proofs can be found in \cite[Theorem 1.3]{FFT2011}, \cite[Theorem 2.1]{HHHO1999} and \cite[Theorem 1.5]{TN1}.

\begin{proposition}\label{proposition:asymptotic_expansion_eigenfunction}
There exists an odd integer $h\geq1$ such that $\varphi_j^a$ has a zero of order $h/2$ at $a$. Moreover, the following asymptotic expansion holds near $a$
\begin{equation}\label{eq:varphia_asymptotic_expansion}
\varphi_j^a(|x-a|,\theta_a)=e^{i\frac{\theta_a}{2}}|x-a|^{h/2} \left[ c_h(a)\cos\left(h\frac{\theta_a}{2}\right) + d_h(a) \sin\left(h\frac{\theta_a}{2}\right) \right] + g(|x-a|,\theta_a)
\end{equation}
where $x-a=|x-a|e^{i\theta_a}$, $c_h(a)^2+d_h(a)^2\neq0$ and $g$ satisfies \eqref{eq:g_remainder}.
In addition, there is a positive radius $R$ such that $(\varphi_j^a)^{-1}(\{0\})\cap D_R(a)$ consists of $h$ arcs of class $C^\infty$. If $h\geq 3$ then the tangent lines to the arcs at the point $a$ divide the disk into $h$ equal sectors.
\end{proposition}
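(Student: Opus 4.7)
The plan is to reduce to a scalar elliptic problem via the gauge transformation of Lemma \ref{lemma:gauge_invariance} and then invoke the Hartman--Wintner-type expansion from Proposition \ref{proposition:asymptotic_laplacian_eigenfunctions}. Define
\[
v(y) := e^{-i\theta(y)}\,\varphi_j^a(y^2+a)
\]
on a punctured disk around $0$ in the $y$-plane. By Lemma \ref{lemma:gauge_invariance}, $v$ is real valued and satisfies $-\Delta v = \lambda_j^a\, p'(y)\, v$ with the smooth weight $p'(y) = 4|y|^2 p(y^2+a)$. Because $y\mapsto y^2$ is invariant under $y\mapsto -y$ while $\theta(-y) = \theta(y)+\pi \pmod{2\pi}$, $v$ enjoys the antisymmetry $v(-y) = -v(y)$, so in particular $v(0)=0$ by continuity.

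Next, I would apply Proposition \ref{proposition:asymptotic_laplacian_eigenfunctions}(i) to $v$ at the interior point $0$: the Hartman--Wintner argument behind that proposition only requires the right-hand side to be bounded, so the vanishing of $p'$ at $0$ is immaterial. This yields an integer $n\geq 1$, real constants $\tilde c,\tilde d$ with $\tilde c^2+\tilde d^2 > 0$, and a remainder $\tilde g$ obeying the analogue of \eqref{eq:g_remainder}, such that
\[
v(r,\phi) = r^{n}\bigl[\tilde c\cos(n\phi) + \tilde d\sin(n\phi)\bigr] + \tilde g(r,\phi).
\]
The antisymmetry $v(r,\phi+\pi) = -v(r,\phi)$ forces $(-1)^n = -1$, so $h := n$ is odd. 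Choosing the branch $y = |x-a|^{1/2}\,e^{i\theta_a/2}$ (so $r = |x-a|^{1/2}$ and $\phi = \theta_a/2$) and unfolding $\varphi_j^a(x) = e^{i\theta_a/2}\,v(y)$ produces \eqref{eq:varphia_asymptotic_expansion} with $c_h(a)=\tilde c$, $d_h(a) = \tilde d$, and
\[
g(|x-a|,\theta_a) := e^{i\theta_a/2}\,\tilde g\bigl(|x-a|^{1/2},\theta_a/2\bigr).
\]
The estimate \eqref{eq:g_remainder} for $g$ reduces to the corresponding one for $\tilde g$ via the chain rule under $r_y = r_x^{1/2}$, using that the relevant tangential derivative picks up only a harmless factor $1/2$ from $\phi = \theta_a/2$.

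Finally, the nodal description is obtained by projecting the zero set of $v$ through the 2-to-1 covering $y\mapsto y^2+a$. Proposition \ref{proposition:asymptotic_laplacian_eigenfunctions}(i) applied to $v$ gives, in some $D_R(0)$, exactly $2h$ smooth nodal half-arcs emanating from $0$ with equispaced tangent half-lines; the antisymmetry of $v$ organises these $2h$ half-arcs into $h$ antipodal pairs. The squaring map collapses each antipodal pair to a single smooth arc emanating from $a$, yielding precisely $h$ nodal arcs of $\varphi_j^a$ of class $C^\infty$ near $a$. When $h\geq 3$, the doubling of angles sends the equispaced tangent directions in the $y$-plane to equispaced tangent directions at $a$, dividing $D_{R'}(a)$ into $h$ equal sectors, which is the last assertion. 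The main technical nuisance I anticipate is precisely the bookkeeping in the remainder estimate under the square-root change of variable and the verification that no nodal arc in the $y$-plane is self-antipodal (which would contradict $h$ being odd); both are handled by direct computations.
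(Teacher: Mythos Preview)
The paper does not give its own proof of this proposition: it merely collects the statement and cites \cite[Theorem 1.3]{FFT2011}, \cite[Theorem 2.1]{HHHO1999} and \cite[Theorem 1.5]{TN1}. Your argument is correct and is precisely the standard route taken in those references --- pass to the double covering via Lemma \ref{lemma:gauge_invariance}, use the antisymmetry $v(-y)=-v(y)$ to force the vanishing order to be odd, invoke the Hartman--Wintner expansion, and then push the nodal picture back through $y\mapsto y^2+a$. The only cosmetic point is that Proposition \ref{proposition:asymptotic_laplacian_eigenfunctions} is stated for strictly positive weights, whereas $p'(y)=4|y|^2p(y^2+a)$ vanishes at $0$; as you note, the underlying Hartman--Wintner argument only needs the potential to be bounded, so this causes no difficulty.
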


\section{Equation on a domain with locally rectified boundary}\label{sec:rectified_boundary}
The local analysis near $0\in\partial\Omega$ is easier if the boundary is locally flat at $0$, i.e. if there exists $r>0$ such that
 \begin{align}\label{eq:rectified_domain}
\Omega \cap D_r(0) = \left\{ x \in D_r(0) \, : \, x_1 > 0 \right\} =: D_r^+(0) .
\end{align}
If $\partial\Omega$ is sufficiently regular, we can locally rectify it without altering the nature of the problem, as we show in the following lemma. This is not the case when $\partial\Omega$ presents an angle, as for the angular sector presented in the Introduction, see Appendix \ref{appendix:cone}.

\begin{lemma} \label{lemma:Riemannmapping}
Let $\Omega,\Omega^{\prime} \subset \mathbb{C}$ be open, bounded and simply connected domains with $C^{\infty}$ boundary. There exist a conformal transformation $\Phi \, : \, \Omega^{\prime} \rightarrow \Omega$, $\Phi \in C^{\infty}(\overline{\Omega^{\prime}})$ and a function $\chi \in C^{\infty}(\overline{\Omega^{\prime}})$ such that, letting $w_k^{a'} = e^{-i \chi} (\varphi_k^a \circ \Phi)$,
we have
\begin{align*}
(i \nabla + A_{a'})^2 w_k^{a'} = \lambda_k^a p'(x) w_k^{a'}, \quad 
w_k^{a'} \in \mathcal{D}^{1,2}_{A_{a'}}(\Omega^{\prime}),
\end{align*}
where $\Phi(a') = a$ and $p'$ satisfies \eqref{eq:weight_assumptions}.
\end{lemma}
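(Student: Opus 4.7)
My plan is to build the map $\Phi$ by the Riemann mapping theorem, pull back the eigenvalue equation through it, and then absorb the discrepancy between the pulled-back magnetic potential and $A_{a'}$ into a gauge transformation $e^{-i\chi}$.

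\textbf{Step 1 (Construction of $\Phi$).} By the Riemann mapping theorem there is a conformal bijection $\Omega' \to \Omega$; post-composing with an automorphism of the unit disk I may prescribe the image of one interior point, so I choose $\Phi$ with $\Phi(a') = a$ for a fixed $a' \in \Omega'$. Since both $\partial \Omega$ and $\partial \Omega'$ are $C^\infty$, the Kellogg--Warschawski theorem gives $\Phi \in C^\infty(\overline{\Omega'})$ and $\Phi'(y) \neq 0$ for every $y \in \overline{\Omega'}$; the same regularity holds for $\Phi^{-1}$.

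\textbf{Step 2 (Pullback of the equation).} Setting $\tilde u(y) = \varphi_k^a(\Phi(y))$, the chain rule gives $\nabla_y \tilde u = (D\Phi)^T (\nabla_x \varphi_k^a)\circ\Phi$, so that, with $B_j(y) := \sum_k \partial_j \Phi_k(y)\, A_{a,k}(\Phi(y))$, i.e. $B\cdot dy = \Phi^\ast(A_a\cdot dx)$,
\[
(i\nabla_y + B)\tilde u = (D\Phi)^T \bigl[(i\nabla_x+A_a)\varphi_k^a\bigr]\circ\Phi.
\]
The conformality identity $(D\Phi)^T D\Phi = |\Phi'|^2 I$ then yields the pointwise relation $|(i\nabla+B)\tilde u|^2 = |\Phi'|^2\, |(i\nabla+A_a)\varphi_k^a|^2\!\circ\!\Phi$. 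Writing the eigenvalue equation weakly and changing variables (the Jacobian on both sides is $|\Phi'|^2$), I get
\[
(i\nabla+B)^2\tilde u = \lambda_k^a \, p'(y)\, \tilde u, \qquad p'(y)=|\Phi'(y)|^2\, p(\Phi(y)),
\]
and $p'$ satisfies \eqref{eq:weight_assumptions} because $p$ does and $\Phi'$ is smooth and non-vanishing up to $\partial\Omega'$.

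\textbf{Step 3 (Gauge transformation).} Interpreting $A_a\cdot dx$ as the closed singular $1$-form $\tfrac12 d\arg(z-a)$, $z=x_1+ix_2$, I write
\[
\Phi(w) - \Phi(a') = (w-a')\, f(w),
\]
with $f$ holomorphic on $\Omega'$, smooth up to $\overline{\Omega'}$, and $f(a')=\Phi'(a')\neq 0$. For every $w\in\partial\Omega'$ we have $\Phi(w)\in\partial\Omega$ while $\Phi(a')\in\Omega$, hence $f(w)\neq 0$ on $\overline{\Omega'}$. Simple connectedness of $\Omega'$ then supplies a holomorphic branch $g = \log f \in C^\infty(\overline{\Omega'})$, and a direct computation gives
\[
\Phi^\ast\!\left(\tfrac12 d\arg(z-a)\right) = \tfrac12 d\arg(w-a') + \tfrac12 d(\operatorname{Im} g),
\]
so that $B = A_{a'} + \nabla\chi$ with $\chi:=\tfrac12 \operatorname{Im} g \in C^\infty(\overline{\Omega'})$. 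Setting $w_k^{a'} = e^{-i\chi}\tilde u$ and using the elementary identity $(i\nabla+A_{a'})(e^{-i\chi}\tilde u) = e^{-i\chi}(i\nabla + A_{a'}+\nabla\chi)\tilde u = e^{-i\chi}(i\nabla+B)\tilde u$, I conclude
\[
(i\nabla+A_{a'})^2 w_k^{a'} = \lambda_k^a\, p'(y)\, w_k^{a'}.
\]
Membership of $w_k^{a'}$ in $\mathcal{D}^{1,2}_{A_{a'}}(\Omega')$ follows because $\Phi$ is a smooth diffeomorphism with bounded (and bounded below) derivatives, so zero boundary values and the weighted $L^2$ condition $\tilde u/|w-a'|\in L^2(\Omega')$ transfer from the analogous statement for $\varphi_k^a$.

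\textbf{Main obstacle.} The delicate point is producing $\chi$ as a genuinely $C^\infty(\overline{\Omega'})$ function: one needs the conformal map to be smooth up to the boundary (Kellogg), the quotient $f = (\Phi(\cdot)-\Phi(a'))/(\cdot-a')$ to be non-vanishing on the \emph{closure} (which rests on injectivity of $\Phi$ on $\overline{\Omega'}$ together with $a'$ being strictly interior), and simple connectedness of $\Omega'$ to produce a single-valued logarithm; everything else is straightforward book-keeping.
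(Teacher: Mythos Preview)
Your proof is correct and follows the same overall architecture as the paper---Riemann map, pullback of the equation, then gauge away the discrepancy---but the implementation of the third step differs. The paper obtains $\chi$ by an existence argument: it verifies that the pulled-back potential $B_{a'}$ is curl-free on $\Omega'\setminus\{a'\}$ and has circulation $1/2$ around $a'$ (by an explicit line-integral computation), whence $B_{a'}-A_{a'}$ is closed with zero circulation and therefore exact, yielding $\chi\in C^\infty(\overline{\Omega'})$. You instead produce $\chi$ constructively: writing $\Phi(w)-\Phi(a')=(w-a')f(w)$ with $f$ holomorphic and non-vanishing on $\overline{\Omega'}$, you obtain $\chi=\tfrac12\operatorname{Im}\log f$ explicitly. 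Your route is more elementary---it avoids the circulation computation and uses only the factorization of holomorphic functions with a simple zero---and has the small bonus of giving a formula for $\chi$; the paper's route is more in the spirit of gauge theory and would generalize more easily to potentials not of Aharonov--Bohm type. You also address explicitly the membership $w_k^{a'}\in\mathcal D^{1,2}_{A_{a'}}(\Omega')$, which the paper leaves implicit.
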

\begin{proof}
The existence of a regular conformal map is ensured by the Riemann Mapping Theorem. We note that $\Phi$ is regular up to the boundary thanks to the assumption that $\Omega,\Omega^{\prime}$ have $C^\infty$ boundary.

First define $v_k^{a'} = \varphi_k^a \circ \Phi$. This function solves
\begin{align*}
(i \nabla + B_{a'})^2 v_k^{a'} = \lambda_k^a p'(x) v_k^{a'},
\end{align*}
where the weight is given by $p'(x) = |\Phi^{\prime}|^2 p \circ \Phi(x)$, $B_{a'} = ( \Phi^{\prime \, t} \cdot A_a) \circ \Phi$ and where $\Phi^{\prime}$ is the matrix of the derivatives of $\Phi$. Indeed, remember that the magnetic potential $A_a$ is defined as $A_a = \nabla \theta_a /2$ almost everywhere in $\Omega$. Under the conformal transformation, the magnetic potential will transform as
\begin{align*}
B_{a'}  = \frac{\nabla ( \theta_a \circ \Phi)}{2}=(\Phi^{\prime \, t} \cdot A_a)\circ \Phi.
\end{align*}

Next, we can verify explicitly that this new magnetic potential $B_{a'}$ has the same circulation than $A_a$. Indeed, if we consider a closed path $\gamma$, winding once around the point $a'$ in $\Omega^{\prime}$, we obtain
\begin{align*}
& \frac{1}{2 \pi} \oint_{x \equiv \gamma} B_{a'}^t(x)|_{x=\gamma(t)} \cdot \gamma^{\prime}(t) \, \mathrm{d}t = \frac{1}{2 \pi} \oint_{x \equiv \gamma} (A_a^t \cdot \Phi^{\prime}) (\Phi(x))|_{x = \gamma(t)} \cdot \gamma^{\prime} (t) \, \mathrm{d}t \\
& = \frac{1}{2 \pi} \oint_{y \equiv \sigma} (A_a^t \cdot \Phi^{\prime} )(y) |_{y = \sigma(t)} \cdot \Phi^{\prime \, -1} \sigma^{\prime}(t) \, \mathrm{d}t = \frac{1}{2\pi} \oint_{y \equiv \sigma} A_a^t(y)|_{y=\sigma(t)} \cdot \sigma^{\prime}(t) \, \mathrm{d}t = \frac{1}{2},
\end{align*}
where we used the change of variable $y = \Phi(x)$. The new path $\sigma = \Phi \circ \gamma$ is a closed path winding once around $a$ in $\Omega$ and its derivative is $\gamma^{\prime}= \Phi^{\prime \, -1} \cdot \sigma^{\prime}$. Moreover, we can verify that $\nabla \times B_{a'} =0$ everywhere in $\Omega^{\prime} \backslash \{ a' \}$. Then, there exists $\chi \in C^{\infty}(\overline{\Omega^{\prime}})$ such that $B_{a'} = A_{a'} + \nabla \chi$, where $A_{a'}$ has the form given in \eqref{eq:magnetic_potential_definition}.

Finally, we can gauge away the term $\nabla \chi$ by letting $w_k^{a'} = e^{-i\chi} v_k^{a'}$.
\end{proof}

As a consequence, by simply renaming the weight $p(x)$ in equation \eqref{eq:main_with_weight}, we will work in a new domain satisfying \eqref{eq:rectified_domain}. 

\begin{remark}\label{rem:regularity_Omega}
In fact, in what follows, it will be enough to have a weight $p$ of class $C^1$. Thanks to \cite[Theorem 5.2.4]{KrantzBook2006}, to this aim it is sufficient to assume $\partial\Omega\in C^{2,\gamma}$ for some positive $\gamma$. This ensures that $\Phi$ in the previous lemma is $C^2(\overline{\Omega'})$, so that the transformed weight $p'(x) = |\Phi^{\prime}|^2 p \circ \Phi(x)$ is $C^1(\overline{\Omega'})$. 
\end{remark}

In Sections \ref{sec:freq_formula} and \ref{sec:pole_approaching_not_nodal}, we will study the behavior of the eigenfunctions $\varphi_j^a$ as $a = (a_1, a_2)$ approaches $0 \in \partial \Omega$. As we will see later, the significant parameter will be the distance of $a$ from the boundary $\partial \Omega$. Such distance is $a_1$ if $\partial \Omega$ is locally flat at $0$ and $|a|$ is sufficiently small. We will perform the analysis in half balls centered at $(0,a_2)$; all the future estimates will be independent from $a_2$. To this aim, it will be useful to have some general inequalities for functions $u \in H^1_{A_a}(D_r^+(0,a_2))$ with $u=0$ on $\{ x_1=0 \}$, $a \in D_r^+(0,a_2)$. 
To simplify the notation, we write 
\[
\pi(a)=(0,a_2),
\]
$\pi$ corresponding then to the projection onto the $x_2$-axis, so that
\[
D_r^+(\pi(a))=\{ (x_1,x_2)\in\R^2:\, x_1^2+(x_2-a_2)^2<r^2, \, x_1>0 \}.
\]

\begin{lemma}[Poincaré inequality] \label{lemma:Poincaré}
Let $a \in D_r^+(\pi(a))$. For all $u \in H^{1}_{A_a}(D_{r}^{+}(\pi(a)))$, with $u=0$ on $\{x_1=0\}$, the following inequality is verified
\begin{align} \label{eq:Poincaré}
\frac{1}{r^{2}} \int_{D_{r}^{+}(\pi(a))} |u|^{2} \, \mathrm{d}x \leq \frac{1}{r} \int_{\partial D_{r}^{+}(\pi(a))} |u|^{2} \, \mathrm{d} \sigma + \int_{D_{r}^{+}(\pi(a))} |(i \nabla + A_a) u|^{2} \, \mathrm{d}x.
\end{align}
\end{lemma}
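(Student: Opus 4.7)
The plan is to reduce to a real-valued Poincaré inequality via the diamagnetic inequality, and then prove the resulting real inequality by a Rellich-type integration by parts using a radial vector field centered at $\pi(a)$, which is tangential on the flat portion of $\partial D_r^+(\pi(a))$ so as to kill the boundary term there.

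\textbf{Step 1 (Diamagnetic reduction).} Set $w=|u|$. From the continuous embedding $H^{1}_{A_a}\hookrightarrow H^{1}$ recalled in \eqref{eq:H_1_A_a_characterization} we have $w\in H^{1}(D_r^+(\pi(a)))$, the trace of $w$ on $\{x_1=0\}$ vanishes, and the diamagnetic inequality yields
\[
|\nabla w|\ \le\ |(i\nabla+A_a)u|\qquad \text{a.e. in } D_r^+(\pi(a)),
\]
so $\int_{D_r^+(\pi(a))}|\nabla w|^2 \le \int_{D_r^+(\pi(a))}|(i\nabla+A_a)u|^2$. It therefore suffices to prove
\begin{equation}\label{eq:real_goal}
\frac{1}{r^{2}}\int_{D_r^+(\pi(a))} w^{2}\dx\ \le\ \frac{1}{r}\int_{\partial D_r^+(\pi(a))} w^{2}\dsigma+\int_{D_r^+(\pi(a))}|\nabla w|^{2}\dx
\end{equation}
for real-valued $w\in H^{1}(D_r^+(\pi(a)))$ vanishing on $\{x_1=0\}$.

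\textbf{Step 2 (Rellich-type identity).} Introduce the radial vector field $X(x)=x-\pi(a)$, which satisfies $\mathrm{div}\,X=2$ and $|X|\le r$ on $D_r^+(\pi(a))$. Crucially, on the flat portion $\{x_1=0\}\cap\overline{D_r^+(\pi(a))}$ the outer normal is $-e_1$ while $X=(0,x_2-a_2)$, so $X\cdot\nu=0$ there; on the curved portion $X\cdot\nu=r$. Since $w$ is additionally zero on the flat piece, integrating $\mathrm{div}(w^{2}X)=2w^{2}+2wX\cdot\nabla w$ and applying the divergence theorem gives
\[
r\int_{\partial D_r^+(\pi(a))} w^{2}\dsigma \;=\; 2\int_{D_r^+(\pi(a))} w^{2}\dx + 2\int_{D_r^+(\pi(a))} w\,X\cdot\nabla w \dx.
\]

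\textbf{Step 3 (Young's inequality to close up).} Estimating the cross term by Cauchy--Schwarz and Young with parameter $1$, together with $|X|\le r$, one has
\[
\bigl|2\,w\,X\cdot\nabla w\bigr|\ \le\ w^{2}+|X|^{2}|\nabla w|^{2}\ \le\ w^{2}+r^{2}|\nabla w|^{2}.
\]
Plugging this into the identity of Step~2 and rearranging yields
\[
\int_{D_r^+(\pi(a))} w^{2}\dx\ \le\ r\int_{\partial D_r^+(\pi(a))} w^{2}\dsigma+r^{2}\int_{D_r^+(\pi(a))}|\nabla w|^{2}\dx,
\]
which upon division by $r^{2}$ is exactly \eqref{eq:real_goal}. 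Combining with Step~1 gives \eqref{eq:Poincaré}.

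No serious obstacle is expected; the only point to be careful about is the trace on $\{x_1=0\}$, which is why working with $w=|u|$ (belonging to $H^{1}$ with vanishing trace on $\{x_1=0\}$) is essential, and which is also what makes the choice $X=x-\pi(a)$ the right one, since its vanishing normal component on $\{x_1=0\}$ means the flat part of the boundary contributes nothing to the divergence identity.
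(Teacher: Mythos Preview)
Your proof is correct and follows essentially the same approach as the paper: integrate the divergence identity for $|u|^2(x-\pi(a))$ and close with Young's inequality. The only difference is that the paper writes the identity directly in terms of the magnetic gradient, $\nabla\cdot(|u|^2(x-\pi(a)))=2\,\mathrm{Re}\big(iu\,\overline{(i\nabla+A_a)u\cdot(x-\pi(a))}\big)+2|u|^2$, and applies Young to the magnetic cross term, whereas you first pass to $w=|u|$ via the diamagnetic inequality and then run the real-valued argument; the two routes yield the same estimate.
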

\begin{proof}
By explicit calculation we see that, for every $u \in H^1_{A_a}(D_r^+(\pi(a)))$, we have
\begin{equation}\label{eq:divergence_u_square_x}
\nabla \cdot (|u|^{2} (x - \pi(a))) = 2 Re \left( i u \, \overline{ (i \nabla + A_b) u \cdot (x - \pi(a)) } \right) + 2 |u|^{2} \qquad \text{a.e. }x\in D_r^+(\pi(a)).
\end{equation}
Then
\begin{align*}
\int_{D_{r}^{+}(\pi(a))} |u|^{2} \dx & = - Re \left( i \int_{D_{r}^{+}} u \, \overline{ (i\nabla + A_a) u \cdot (x - \pi(a)) } \dx \right) + \frac{r}{2} \int_{\partial D_{r}^{+}(\pi(a))} |u|^{2} \dsigma \\
                                      & \leq \frac{1}{2} \int_{D_{r}^{+}(\pi(a))} |u|^{2}\dx + \frac{r^2}{2} \int_{D_r^+(\pi(a))}|(i \nabla + A_a) u|^{2} \dx + \frac{r}{2} \int_{\partial D_{r}^{+}(\pi(a))} |u|^{2} \dsigma,
\end{align*}
where we used the Young inequalities and the fact that $(x - \pi(a)) = r \nu$ on $\partial D_r^+(\pi(a))$. This proves the statement.
\end{proof}

Similarly, we can prove the similar Poincaré inequality for all functions $v$ in $H^1(D_r^+(0),\R)$ with zero boundary conditions on $\{x_1 = 0\}$,
\begin{align} \label{eq:Poincaré2}
\frac{1}{r^{2}} \int_{D_{r}^{+}(0)} |v|^{2} \, \mathrm{d}x \leq \frac{1}{r} \int_{\partial D_{r}^{+}(0)} |v|^{2} \, \mathrm{d} \sigma + \int_{D_{r}^{+}(0)} | \nabla v|^{2} \, \mathrm{d}x.
\end{align}

\begin{lemma} \label{lemma:inequality2}
Let $a \in D_r^+(\pi(a))$. For $u \in H^{1}_{A_a}(D_{r}^{+}(\pi(a)))$, with $u=0$ on $\{x_1=0\}$,
the following holds
\begin{align} \label{eq:inequality2}
\frac{1}{r} \int_{\partial D_{r}^{+}(\pi(a))} |u|^{2}\dsigma \leq \int_{D_{r}^{+}(\pi(a))} |(i \nabla + A_a) u|^{2}\dx.
\end{align}
\end{lemma}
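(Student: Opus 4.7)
The plan is to combine the diamagnetic inequality with the sharp mixed Steklov--Dirichlet inequality on the half-disk, whose first eigenvalue is exactly $1/r$ (realized by the harmonic function $x_1=\rho\cos\alpha$). The diamagnetic inequality gives $|u|\in H^1(D_r^+(\pi(a)))$, $|u|=0$ on $\{x_1=0\}$, and $|\nabla|u||\leq|(i\nabla+A_a)u|$ almost everywhere. Since the boundary trace of the modulus equals the modulus of the boundary trace, it suffices to prove the real-valued inequality
\[
\frac{1}{r}\int_{\partial D_r^+(\pi(a))} v^2\,\dsigma \;\leq\; \int_{D_r^+(\pi(a))} |\nabla v|^2\,\dx
\]
for every real $v\in H^1(D_r^+(\pi(a)))$ with $v=0$ on $\{x_1=0\}$.

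I would prove this real inequality by separation of variables in polar coordinates $(\rho,\alpha)$ centered at $\pi(a)$: the half-disk becomes $\{\rho\in(0,r),\,\alpha\in(-\pi/2,\pi/2)\}$ and $\{x_1=0\}$ corresponds to $\alpha=\pm\pi/2$. Let $\{e_k\}_{k\geq 1}$ be the $L^2(-\pi/2,\pi/2)$-orthonormal basis of Dirichlet eigenfunctions of $-\partial_\alpha^2$, with eigenvalues $\mu_k\geq\mu_1=1$ (the smallest eigenvalue $\mu_1=1$ corresponds to $e_1\propto\cos\alpha$). Writing $v(\rho,\alpha)=\sum_{k\geq 1}a_k(\rho)e_k(\alpha)$, orthonormality yields
\[
\int_{D_r^+(\pi(a))}|\nabla v|^2\,\dx=\sum_k\int_0^r\!\Bigl[\rho\,a_k'(\rho)^2+\frac{\mu_k}{\rho}\,a_k(\rho)^2\Bigr]\drho,\qquad \int_{\partial D_r^+(\pi(a))}v^2\,\dsigma=r\sum_k a_k(r)^2.
\]
Finiteness of the Dirichlet energy together with $\mu_k\geq 1$ forces $\rho^{-1}a_k(\rho)^2$ to be integrable at $\rho=0$, which forces $a_k(0^+)=0$. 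Then $a_k(r)^2=\int_0^r 2 a_k a_k'\,\drho$, and Young's inequality $2|a_k a_k'|\leq\rho\,a_k'(\rho)^2+\rho^{-1}a_k(\rho)^2$ combined with $\mu_k\geq 1$ gives $a_k(r)^2\leq\int_0^r[\rho\,a_k'^2+\mu_k\rho^{-1}a_k^2]\,\drho$. Summing over $k$ and multiplying by $r$ produces the desired Steklov-type inequality.

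The main technical point is to make the series expansion and the condition $a_k(0^+)=0$ rigorous for arbitrary $v\in H^1$. This is handled by a standard density argument: smooth functions vanishing in a neighborhood of $\{x_1=0\}$ (hence in a neighborhood of the corner point $\pi(a)$) are dense in the admissible class, the inequality is immediate for such test functions (each $a_k$ is smooth and supported away from $\rho=0$), and the estimate passes to the limit since both sides are continuous under $H^1$-convergence via the trace theorem.
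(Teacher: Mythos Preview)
Your proof is correct and follows the same overall reduction as the paper --- apply the diamagnetic inequality to reduce to the real-valued Steklov--Dirichlet inequality on the half-disk --- but the way you establish that real inequality is genuinely different. The paper proceeds variationally: it rescales to $D_1^+(0)$, shows by the direct method that the infimum $\beta=\inf\{\|\nabla w\|_{L^2}^2:\|w\|_{L^2(\partial D_1^+)}=1,\ w=0\text{ on }\{x_1=0\}\}$ is attained, writes the Euler--Lagrange equation as a mixed Steklov--Dirichlet problem, and then identifies $\beta=1$ by expanding the harmonic minimizer in boundary Fourier modes $r^k\cos(k\theta)$. You instead work directly: expand $v$ in the angular Dirichlet eigenbasis $\{e_k\}$, use $\mu_k\ge\mu_1=1$, and obtain the radial inequality $a_k(r)^2\le\int_0^r[\rho a_k'^2+\mu_k\rho^{-1}a_k^2]\,\mathrm d\rho$ from the fundamental theorem of calculus and Young's inequality. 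Your route is more elementary --- it avoids the existence argument for the minimizer and the Steklov characterization --- while the paper's route is more structural in that it explicitly identifies the sharp constant as the first Steklov eigenvalue and exhibits the extremal $\bar w=\sqrt{2/\pi}\,x_1$. One small slip: after summing over $k$ you already have $\sum_k a_k(r)^2\le\int|\nabla v|^2$, which \emph{is} the desired inequality (since $\sum_k a_k(r)^2=r^{-1}\int_{\partial D_r^+}v^2$); no further multiplication by $r$ is needed.
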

\begin{proof}
We will prove the following statement: for all $v \in H^1(D_r^+(\pi(a)),\R)$, with $v = 0$ on $\{x_1 = 0\}$, we have
\begin{align} \label{eq:Poincaretype}
\frac{1}{r} \int_{\partial D_{r}^{+}(\pi(a))} |v|^{2}\dsigma \leq \int_{D_{r}^{+}(\pi(a))} |\nabla v|^{2}\dx.
\end{align}
The lemma follows from it by taking $v=|u|$ and by applying the diamagnetic inequality.
It is sufficient to prove \eqref{eq:Poincaretype} in the ball $D_1^+(0)$ since the general case can be recovered by performing a translation and a dilation.
Let
\begin{align*}
\beta = \inf \left\{ \int_{D_{1}^{+}(0)} | \nabla w |^{2}\dx \, : \, \int_{\partial D_{1}^{+}(0)} w^{2}\dsigma = 1 \, , \, w \in H^1(D_1^+(0),\mathbb{R}) \, , \, w= 0 \text{ on } \{x_1=0\} \right\}.
\end{align*}
Let $w_{k}$ be a minimizing sequence. Then $\sup_{k} \int_{D_{1}^{+}(0)} |\nabla w_{k}|^{2}\dx \leq C$ and by the Poincaré inequality \eqref{eq:Poincaré2} we have $\sup_{k} \| w_{k} \|_{H^{1}(D_{1}^{+}(0),\mathbb{R})} \leq C'$. Hence there exists a function $\bar{w} \in H^{1}(D_{1}^{+}(0),\mathbb{R})$ such that, up to a subsequence,
\begin{align*}
w_{k} \rightharpoonup \bar{w} \ \mbox{ in } \ H^{1}(D_{1}^{+}(0),\mathbb{R})
\quad \text{ and } \quad w_{k} \rightarrow \bar{w} \ \mbox{ in } \ L^{2}(D_{1}^{+}(0),\mathbb{R})
\end{align*}
by the compact injection. Using the lower semi-continuity of the $H^1$-norm we obtain
\begin{align*}
\int_{D_{1}^{+}(0)} |\nabla \bar{w} |^{2}\dx \leq  \liminf_{k \to + \infty} \int_{D_{1}^{+}(0)} |\nabla w_{k}|^{2}\dx = \beta.
\end{align*}
By the compact embedding $H^1(\Omega)\hookrightarrow L^2(\partial \Omega)$ (see e.g. \cite{AdamsFournierBook2003}) we also have $\int_{\partial D_{1}^{+}(0)} |\bar{w}|^{2}\dsigma = 1$.
Therefore $\bar{w}$ is a minimizer and solves the associated Euler-Lagrange equation
\begin{align*}
\left\{ \begin{aligned}  - \Delta \bar{w} & = 0 \quad & D_{1}^{+}(0) \\
                         \frac{\partial \bar{w}}{\partial \nu} & = \beta \bar{w} \quad & \partial D_{1}^{+}(0) \\
                         \bar{w}&=0 \quad & x_1=0.
                        \end{aligned} \right.
\end{align*}
If we decompose the boundary trace in Fourier series $\bar{w}(1,\theta)=\sum_{k\text{ odd}} \alpha_k \cos(k\theta)$, $\theta\in [-\frac{\pi}{2},\frac{\pi}{2}]$, then $\bar{w}(r,\theta)=\sum_{k\text{ odd}} \alpha_k r^k \cos(k\theta)$. The boundary conditions imply $k\alpha_k=\beta \alpha_k$ for every $k\geq1$. We deduce that $\bar{w}(r,\theta)=\alpha_\beta r^\beta e^{i\beta\theta}$ for some integer $\beta\geq 0$. Since $\bar{w}$ can not vanish because of the constraint, the infimum is assumed by $\beta=1$ and $\bar{w}=\sqrt{\frac{2}{\pi}}x_1$.
\end{proof}

\section{Pole approaching the boundary on a nodal line of $\varphi_k$}\label{sec:pole_approaching_nodal}

In this section we prove Theorem \ref{theorem:at_least_one_nodal_line}. We use some of the ideas introduced in \cite[Section 6]{BonnaillieNorisNysTerracini2013}. As we already mentioned, we modify the argument therein both in order to avoid the use of local inversion methods and in order to prove that the convergence $\lambda_k^a\to\lambda_k$ takes place from below.

We adopt the standard notation $f(x)=o( g(x))$ as $x\to x_0$ if $\lim_{x\to x_0} f(x)/g(x)$ is zero, $f(x)=O(g(x))$ as $x\to x_0$ if $\limsup_{x\to x_0} |f(x)/g(x)|$ is finite, $f(x)\sim g(x)$ as $x\to x_0$ if $\lim_{x\to x_0} f(x)/g(x)$ is finite and different from zero.

\begin{lemma}\label{lemma:v_i_int_estimates}
Let $\lambda>0$ and $p$ satisfy \eqref{eq:weight_assumptions}. Let $a \to 0$ so that $\frac{a}{|a|} \to e \notin \{ x_1 = 0 \}$. Consider the following set of equations in the parameter $a$
\begin{equation}\label{eq:v}
\left\{\begin{array}{ll}
(i\nabla+A_a)^2v=\lambda p(x) v \quad & \text{in } D_{2|a|}^+(0) \\
v=0 & \text{on } \{x_1=0\} \\
v=e^{i\frac{\theta_a}{2}}  \left\{ |a|^{1+n} f+g(2|a|,\cdot) \right\} \quad &\text{on } \partial D_{2|a|}^+(0)\cap \R^2_+,
\end{array}\right.
\end{equation}
where $f,g(2|a|,\cdot)\in H^1(\partial D_{2|a|}^+(0)\cap\R^2_+)$ are real valued, vanish at $-\pi/2$ and at $\pi/2$, $f\not\equiv0$ and, for some $n\in \N$,
\[
\lim_{|a|\to0} \frac{\|g(2|a|,\cdot)\|_{H^1(\partial D_{2|a|}^+(0)\cap\R^2_+)}}{|a|^{1+n}}=0.
\]
Then for $|a|$ sufficiently small there exists a unique solution of \eqref{eq:v}, which moreover satisfies
\begin{gather*}
\|v\|_{L^2(\partial D^+_{2|a|}(0))}\sim |a|^{\frac{3}{2}+n },\
\|v\|_{L^2(D^+_{2|a|}(0))}\sim |a|^{2+n }, \\
\|(i\nabla+A_a)v\cdot\nu\|_{L^2(\partial D^+_{2|a|}(0)\cap\R^2_+)} \sim |a|^{\frac{1}{2}+n },\ 
\left| \int_{\partial D_{2|a|}^+(0)} (i\nabla+A_a)v\cdot\nu v\dsigma \right| \sim |a|^{2n+2}.
\end{gather*}
\end{lemma}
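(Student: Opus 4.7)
My plan is to perform a blow-up analysis at scale $|a|$. Setting $\tilde v(y):=v(|a|y)/|a|^{n+1}$ for $y\in D_2^+(0)$ and using the scalings $A_a(|a|y)=|a|^{-1}A_{a/|a|}(y)$ together with $\theta_a(|a|y)=\theta_{a/|a|}(y)$, a direct computation shows that $\tilde v$ solves
\begin{equation*}
\left\{\begin{array}{ll}
(i\nabla+A_{a/|a|})^2\tilde v=|a|^2\lambda\, p(|a|y)\,\tilde v & \text{in }D_2^+(0),\\
\tilde v=0 & \text{on }\{y_1=0\},\\
\tilde v=e^{i\theta_{a/|a|}/2}(f+o(1)) & \text{on }\partial D_2^+(0)\cap\R^2_+,
\end{array}\right.
\end{equation*}
where the $o(1)$ vanishes in $H^1$ of the boundary thanks to the hypothesis on $g$. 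Since $a/|a|\to e\notin\{y_1=0\}$, the candidate limit problem is posed on the fixed domain $D_2^+(0)$ with pole $e$, operator $(i\nabla+A_e)^2$, zero right-hand side, and Dirichlet datum $e^{i\theta_e/2}f$.

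First I will establish existence and uniqueness of $v$ for $|a|$ small by Lax--Milgram. After lifting the prescribed outer trace, the remainder lies in $\mathcal{D}^{1,2}_{A_a}(D_{2|a|}^+(0))$, where the sesquilinear form $(u,w)\mapsto\int(i\nabla+A_a)u\cdot\overline{(i\nabla+A_a)w}-\lambda p u\bar w$ is coercive: the Dirichlet Poincaré inequality on the small half-ball combined with the diamagnetic inequality gives $\|u\|_{L^2}^2\leq C|a|^2\|(i\nabla+A_a)u\|_{L^2}^2$, which absorbs the zero-order term once $|a|$ is small. The limit problem is well-posed by the same argument with $\lambda=0$, and uniqueness there will be used below.

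Next I will prove that $\{\tilde v\}$ is uniformly bounded in $H^1_{A_{a/|a|}}(D_2^+(0))$ via the energy estimate applied to the rescaled equation. A crucial point is that the constant in the Hardy inequality \eqref{eq:hardy} depends only on the circulation (fixed at $1/2$), so it is uniform as the pole $a/|a|$ moves. Rellich-type compactness together with compactness of the trace operator then yield $\tilde v\to w$ strongly in $L^2(D_2^+(0))$ and in $L^2$ of both components of $\partial D_2^+(0)$, and weakly in $H^1_{A_e,\mathrm{loc}}$, the limit $w$ solving the limit problem. Uniqueness of $w$ promotes this subsequential convergence to the convergence of the whole family.

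Finally, the four asymptotics follow from the change of variables $x=|a|y$, which gives
\begin{align*}
\|v\|_{L^2(\partial D^+_{2|a|}(0))}&=|a|^{n+3/2}\|\tilde v\|_{L^2(\partial D^+_2(0))},\\
\|v\|_{L^2(D^+_{2|a|}(0))}&=|a|^{n+2}\|\tilde v\|_{L^2(D^+_2(0))},\\
\|(i\nabla+A_a)v\|_{L^2(\partial D^+_{2|a|}(0)\cap\R^2_+)}&=|a|^{n+1/2}\|(i\nabla+A_{a/|a|})\tilde v\|_{L^2(\partial D^+_2(0)\cap\R^2_+)},
\end{align*}
and analogously $\bigl|\int_{\partial D_{2|a|}^+(0)}(i\nabla+A_a)v\cdot\nu\,v\,\dsigma\bigr|=|a|^{2n+2}\bigl|\int_{\partial D_2^+(0)}(i\nabla+A_{a/|a|})\tilde v\cdot\nu\,\tilde v\,\dsigma\bigr|$. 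Since $f\not\equiv0$, the limit trace of $w$ is nontrivial, yielding the first estimate; the second and third follow because $w\not\equiv0$ in $D_2^+(0)$ and the rescaled magnetic gradient has nonzero $L^2$-boundary norm (otherwise $w$ would be boundary-harmonic with zero data, contradicting non-triviality). For the fourth, the magnetic integration-by-parts identity applied to $w$ --- using $(i\nabla+A_e)^2 w=0$ and $w=0$ on $\{y_1=0\}$ --- expresses the corresponding limit integral in terms of $\int_{D_2^+(0)}|(i\nabla+A_e)w|^2\,\dx$, which is strictly positive. The main obstacle is the uniform control along the blow-up in presence of the shifting magnetic pole $a/|a|\to e$; this is handled by the circulation being fixed at the half-integer value $1/2$, making the Poincaré, Hardy, and trace constants independent of $a$.
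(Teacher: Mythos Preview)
Your blow-up strategy matches the paper's first move (rescale by $|a|$, normalize by $|a|^{1+n}$), but the paper then proceeds differently: instead of a compactness argument with moving pole $a/|a|\to e$, it composes with a conformal map $\Phi_a$ sending $a/|a|$ to the fixed point $e$, and passes to the double covering $\tilde\Omega=\{y:y^2+e\in D_2^+(0)\}$. This reduces the problem to ordinary elliptic equations on a \emph{fixed} domain in a \emph{fixed} function space, where the solution is split as $z_1+z_2$ with $z_1$ harmonic matching the boundary data and $z_2=O(|a|^2)$; all four asymptotics then follow from explicit elliptic estimates on $\tilde\Omega$.

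Your compactness route has a genuine gap at the third and fourth estimates. From weak $H^1$ convergence and strong $L^2$ convergence (interior and trace) you cannot conclude that $\|(i\nabla+A_{a/|a|})\tilde v\cdot\nu\|_{L^2(\partial D_2^+\cap\R^2_+)}$ converges, nor even that it stays bounded: the boundary normal derivative is only controlled in $H^{-1/2}$ under weak $H^1$ convergence, and you need it in $L^2$. For the fourth estimate, rewriting the boundary integral via the equation as $\int_{D_2^+}|(i\nabla+A_{a/|a|})\tilde v|^2-|a|^2\lambda\int p|\tilde v|^2$, the first term is only lower semicontinuous under weak convergence, so you obtain the lower bound but not the matching upper bound required by ``$\sim$''. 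To rescue your approach you would need uniform $H^2$ bounds on an annulus near $\partial D_2^+(0)$ (feasible, since the pole $a/|a|$ stays away from the outer boundary and standard elliptic regularity applies there), and then upgrade to strong $H^1$ convergence near the boundary; this is a nontrivial extra step that you have not supplied. A secondary looseness: writing ``weakly in $H^1_{A_e,\mathrm{loc}}$'' is not quite well-posed since $\tilde v\in H^1_{A_{a/|a|}}$, not $H^1_{A_e}$; the correct statement is weak convergence in ordinary $H^1$ (via the uniform embedding), with the limit then shown to lie in $H^1_{A_e}$. The paper's conformal map plus double covering sidesteps both issues by making the leading-order problem independent of $a$.
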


\begin{proof}
Notice that the boundary trace is continuous on $\partial D^+_{2|a|}(0)$. Indeed, we can choose $\theta_a$ to be discontinuous on the segment joining $a$ with the origin, so that $e^{i\theta_a/2}$ restricted to the boundary is discontinuous only at the origin, where the boudary trace vanishes.
The existence and uniqueness of the minimizer follow plainly by the fact that the quadratic form
\begin{equation}\label{eq:quadratic_form}
\int_{D_{2|a|}^+(0)} \left[ |(i\nabla+A_a)v|^2-\lambda p(x) |v|^2 \right]\dx
\end{equation}
is coercive for $|a|$ sufficiently small. 

The estimate on the $L^2(\partial D^+_{2|a|}(0))$-norm is immediate. In order to prove the remaining estimates, we wish to proceed as in \cite[Lemma 6.1]{BonnaillieNorisNysTerracini2013}. To this aim, we first make the rescaling $x/|a|$ as well as a normalization of the functions by the factor $|a|^{1+n}$. Next, in order to have a fixed singularity, not depending on the parameter $a$, we apply a conformal transformation. Once the singularity is fixed, by going on the double covering, we obtain a family of elliptic equations defined on the same domain. More precisely, we consider the family of conformal transformations $\Phi_a$ in the parameter $a$ satisfying the properties
\begin{align*}
\Phi_a^{-1} \in C(\overline{D_2^+(0)}), \quad \Phi_a^{-1}(D_2^+(0))=D_2^+(0), \quad 
 \text{ and} \quad \Phi_a^{-1}\left(\frac{a}{|a|}\right) = e. 
\end{align*}
Since $a/|a| \to e$, we have that $\Phi_a$ is a small perturbation of the identity. Moreover, thanks to the fact that $e \notin \{ x_1 = 0\}$, we obtain that 
\begin{align} \label{eq:unifrom_bound_Phi}
| \Phi_a^{\prime}| \leq C \text{ uniformly in }  a.
\end{align}
We decompose $v$ in the following way
\[
v(x)=|a|^{1+n} z_1\left(\Phi_a^{-1}\left(\frac{x}{|a|}\right)\right) + |a|^{1+n} z_2\left(\Phi_a^{-1}\left(\frac{x}{|a|}\right)\right),
\]
where 
\[
(i\nabla+A_e)^2 z_1=0 \text{ in } D^+_2(0), \quad z_1=|a|^{-(1+n)}v(|a|\Phi_a) \text{ on } \partial D_2^+(0),
\]
\[
(i\nabla+A_e)^2 z_2 = \lambda \, p(|a|\Phi_a)\, |\Phi_a'|^2 \,|a|^2 (z_1 + z_2) \text{ in } D^+_2(0) , \quad
z_2=0  \text{ on } \partial D_2^+(0).
\]
Now we have two equations with variable coefficients on a fixed domain and with fixed singularity, so that the double covering
\begin{align*}
\tilde{\Omega} = \left\{ y \in \mathbb{C} \, : \, y^2 + e \in D_2^+(0) \right\}
\end{align*}
does not depend on $a$.
Since $e^{i \theta_a(|a|\Phi_a)/2} = e^{i \chi} e^{i \theta_e/2}$, for some regular function $\chi$, Lemma \ref{lemma:gauge_invariance} implies that the new functions 
\begin{align*}
\tilde{z}_i(y)  = e^{-i\frac{\theta_e}{2} - i \chi} z_i(y^2 +e)
\end{align*}
solve an elliptic equation in $\tilde{\Omega}$. Proceeding as in the proof of \cite[Lemma 6.1]{BonnaillieNorisNysTerracini2013} and using \eqref{eq:unifrom_bound_Phi}, one can show that $\tilde{z}_2$ provides a negligible contribution in the sense that
\[
\|\tilde{z}_2\|_{H^1(\tilde{\Omega})}+\|\nabla \tilde{z}_2\cdot\nu\|_{L^2(\partial \tilde{\Omega})} \leq C|a|^2,
\]
and moreover that
\[
c_1 \leq \|\tilde{z}_1\|_{H^1(\tilde{\Omega})} \leq c_2 , \qquad 
c_1 \leq \| \nabla \tilde{z}_1 \cdot \nu \|_{L^2(\partial \tilde{\Omega})} \leq c_2,
\]
for positive constants $c_1, c_2$. This also provides
\[
c_1 \leq \left|\int_{\partial \tilde{\Omega}} \nabla (\tilde{z}_1+\tilde{z}_2)\cdot\nu(\tilde{z}_1+\tilde{z}_2) \dsigma \right| \leq c_2.
\]
Going back to the original domain $D_{2|a|}^+(0)$, we obtain the statement.
\end{proof}

\begin{lemma} \label{lemma:greatesteigenvalue2}
Let $k\geq 2$ and let $M = M(|a|) = (m_{ij})$ be $k \times k$ real valued matrices depending in a smooth way on the parameter $|a|$. Suppose that there exist $n\in\N_0$ and  $C_k > 0$ such that, as $|a| \to 0$, we have
\begin{align*}
& m_{ii} = \lambda_i + O(|a|^2), \, i = 1, \ldots, k-1,  \quad m_{kk} = \lambda_k - C_{k}|a|^{2n + 2} + o(|a|^{2n+2}), \\
& m_{ij} = O(|a|^2), \, i \neq j, \, i,j = 1, \ldots, k-1, \quad m_{ij} = O(|a|^{n+2}), \, i \neq j \text{ and } i =k \text{ or } j = k. 
\end{align*}
If $\lambda_{k-1} < \lambda_k$, then the greatest eigenvalue of $M$ satisfies
\begin{align*}
\lambda_{max}(M) = \lambda_k - C_k|a|^{2n+2} + o(|a|^{2n+2}) \text{ as } a \to 0.
\end{align*}
\end{lemma}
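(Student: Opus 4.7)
The plan is to treat $M$ as a perturbation of $\mathrm{diag}(\lambda_1,\dots,\lambda_k)$ and use a Schur-complement/block-decomposition argument to isolate the behaviour of the eigenvalue near $\lambda_k$. Split
\[
M = \begin{pmatrix} A & b \\ b^T & c \end{pmatrix},
\]
where $A$ is the $(k-1)\times(k-1)$ principal minor, $b\in\mathbb{R}^{k-1}$ is the vector $(m_{1k},\dots,m_{k-1,k})^T$ and $c=m_{kk}$. From the hypotheses, $A = \mathrm{diag}(\lambda_1,\dots,\lambda_{k-1}) + O(|a|^2)$, $\|b\| = O(|a|^{n+2})$ and $c = \lambda_k - C_k|a|^{2n+2}+o(|a|^{2n+2})$.

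First I would fix a spectral gap $\delta := \tfrac12(\lambda_k-\lambda_{k-1})>0$ and use continuity of eigenvalues with respect to matrix entries: since at $|a|=0$ the spectrum of $M$ is $\{\lambda_1,\dots,\lambda_k\}$ with $\lambda_k$ isolated above, for $|a|$ small the largest eigenvalue $\mu := \lambda_{\max}(M)$ lies in $(\lambda_k-\delta,\lambda_k+\delta)$, while every eigenvalue of $A$ is bounded above by $\lambda_{k-1}+C|a|^2 \le \lambda_k-\delta$ (use e.g. Weyl's inequality on the symmetric perturbation of a diagonal matrix). Consequently $A-\mu I$ is invertible with $\|(A-\mu I)^{-1}\|\le 2/\delta$ uniformly in $|a|$ small.

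Next, I would apply the standard Schur-complement identity
\[
\det(M-\mu I) = \det(A-\mu I)\,\bigl[(c-\mu) - b^T(A-\mu I)^{-1}b\bigr].
\]
Since $\mu$ is an eigenvalue of $M$ and $A-\mu I$ is invertible, the second factor vanishes, so
\[
\mu = c - b^T(A-\mu I)^{-1}b.
\]
The correction term is estimated by
\[
\bigl|b^T(A-\mu I)^{-1}b\bigr| \le \|(A-\mu I)^{-1}\|\,\|b\|^2 \le \frac{2}{\delta}\,C\,|a|^{2(n+2)} = O(|a|^{2n+4}).
\]
Since $2n+4>2n+2$, this is $o(|a|^{2n+2})$. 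Combining with the expansion of $c$ gives
\[
\mu = \lambda_k - C_k|a|^{2n+2} + o(|a|^{2n+2}),
\]
which is exactly the statement. The one step that requires mild care is verifying that the eigenvalue of $M$ we pick up through the Schur-complement equation is indeed the largest one; this is a consequence of the continuous dependence of eigenvalues on $|a|$, together with the spectral gap, as above. No other obstacles arise.
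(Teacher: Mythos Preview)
Your argument is correct and takes a genuinely different route from the paper. The paper expands $\det(M-tI)$ directly as a sum over permutations, grouping terms according to the number of fixed points and whether or not $k$ is fixed; this yields an expression of the form $(\lambda_k-C_k\varepsilon^{n+1}+o(\varepsilon^{n+1})-t)\,Q_{k-1}(t,\varepsilon)+\text{lower order}$ in $\varepsilon=|a|^2$, to which the implicit function theorem is applied at $(t,\varepsilon)=(\lambda_k,0)$, followed by computing $n{+}1$ successive derivatives to extract the expansion of the root. Your Schur-complement approach is more structural and shorter: it isolates the scalar fixed-point equation $\mu=c-b^{T}(A-\mu I)^{-1}b$ and bounds the correction in one line as $O(|a|^{2n+4})=o(|a|^{2n+2})$, bypassing both the permutation combinatorics and the iterated differentiation.

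One point to tidy up: the lemma does not assume $M$ symmetric, and in the paper it is in fact applied to $N^{-1}M$, which is not symmetric. So you should write the block form as $\begin{pmatrix} A & b\\ d^{T} & c\end{pmatrix}$ with possibly $d\neq b$ (both are still $O(|a|^{n+2})$), and replace the appeal to Weyl's inequality and the normal-matrix resolvent bound by the Neumann-series estimate: writing $A=D+E$ with $D=\mathrm{diag}(\lambda_1,\dots,\lambda_{k-1})$ and $\|E\|=O(|a|^2)$, one has $(A-\mu I)^{-1}=(D-\mu I)^{-1}\bigl(I+E(D-\mu I)^{-1}\bigr)^{-1}$, which is uniformly bounded for $\mu$ in a $\delta$-neighbourhood of $\lambda_k$. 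The Schur identity $\det(M-\mu I)=\det(A-\mu I)\bigl[(c-\mu)-d^{T}(A-\mu I)^{-1}b\bigr]$ and the estimate $|d^{T}(A-\mu I)^{-1}b|=O(|a|^{2n+4})$ then go through unchanged.
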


\begin{proof}
In order to estimate the eigenvalues of $M$, we compute the determinant of the matrix $B=M-tId=(b_{ij})$. We have
\begin{equation}\label{eq:determinant}
\text{det}(B)=\sum_{\sigma\in P_k} \text{sgn}(\sigma) \prod_{i=1}^k
b_{i\sigma(i)},
\end{equation}
where $\sigma$ is a permutation of the set $\{1,\ldots,k\}$, $P_k$ is the set of all such permutations, $\text{sgn}(\sigma)$ is the sign of $\sigma$ and $\sigma(i)$ is the image of the element $i\in \{1,\ldots,k\}$ under the action of $\sigma$. We recall that a fixed point of $\sigma$ is an element $i$ such that $\sigma(i)=i$. We define, for $r=0,\ldots,k$,
\[
P_{k,r}=\{\sigma\in P_k: \ \sigma \text{ has exactly } r \text{ fixed points} \}.
\]
Notice that $P_{k,k}=\{Id\}$ and $P_{k,k-1}=\emptyset$.
We split the sum in \eqref{eq:determinant} in the following way:
\[
\text{det}(B)=\prod_{i=1}^k b_{ii} + 
\sum_{r=1}^{k-2}\sum_{\substack{\sigma\in P_{k,r} \\ \sigma(k)=k}} \text{sgn}(\sigma) \prod_{i=1}^k b_{i\sigma(i)} 
+ \sum_{r=0}^{k-2}\sum_{\substack{\sigma\in P_{k,r} \\ \sigma(k)\neq k}} \text{sgn}(\sigma) \prod_{i=1}^k b_{i\sigma(i)}.
\]
Due to the specific form of $M$ we can estimate each piece as follows
\[
\prod_{i=1}^k b_{ii}=(\lambda_k-C_k|a|^{2n +2}+o(|a|^{2n+2})-t) \prod_{i=1}^{k-1} (\lambda_i+O(|a|^{2})-t),
\]
\[
\sum_{\substack{\sigma\in P_{k,r} \\ \sigma(k)=k}} \text{sgn}(\sigma) \prod_{i=1}^k b_{i\sigma(i)} = (\lambda_k-C_k|a|^{2n +2}-t) O\left(|a|^{2(k-r)}\right) Q_{r-1}(t,|a|^2), \quad r=1,\ldots,k-2,
\]
\[
\sum_{\substack{\sigma\in P_{k,r} \\ \sigma(k)\neq k}} \text{sgn}(\sigma) \prod_{i=1}^k b_{i\sigma(i)} = O\left(|a|^{2(n+2)}\right) O\left(|a|^{2(k-r-2)}\right) Q_r(t,|a|^2), \quad r=0,\ldots,k-2,
\]
where $Q_{r}(t,|a|^2)$ denotes a polynomial of degree $r$ in the variable $t$, which depends on $|a|$ with terms of order $O(|a|^2)$. More explicitly, $Q_{r}(t,|a|^2)$ is given by the sum over any possible choice of $r$ numbers in the set $\{1,\ldots,k-1\}$ of a product of $r$ terms of the form $(\lambda_i + O(|a|^2) - t)$, for some $i \neq k$. We can also define $Q_{k-1}(t,|a|^2) = \prod_{i=1}^{k-1} (\lambda_i+O(|a|^{2})-t)$ and remark that 
\begin{align}  \label{eq:Q_k-1_not_vanish}
Q_{k-1}(\lambda_k, 0) = \prod_{i=1}^{k-1} (\lambda_i - \lambda_k) \neq 0,
\end{align}
where we used the assumption that $\lambda_{k-1} < \lambda_k$.

Let $\varepsilon=|a|^{2}$. We rewrite the determinant in terms of $\varepsilon$, obtaining
\[
\begin{split}
\text{det}(B)=(\lambda_k-C_k\varepsilon^{n +1}+o(\varepsilon^{n+1})-t) 
\left\{ Q_{k-1}(t,\varepsilon)+\sum_{r=1}^{k-2} O\left(\varepsilon^{k-r}\right) Q_{r-1}(t,\varepsilon) \right\} \\
+ \sum_{r=0}^{k-2} O\left(\varepsilon^{n+k - r}\right) Q_r(t,\varepsilon) =: f(t,\varepsilon).
\end{split}
\]
The assumptions of the implicit function theorem hold for $f(t,\varepsilon)$ at the point $(\lambda_k,0)$. Indeed, $f(\lambda_k, 0) = 0$, $f$ is at least $C^{n+1}$ in a neighbourhood of $(\lambda_k, 0)$, and $\frac{\partial f}{\partial t}(\lambda_k,0) = - Q_{k-1}(\lambda_k,0) \neq 0$ thanks to  \eqref{eq:Q_k-1_not_vanish}. Then there exists a function $\lambda(\varepsilon) \in C^{n+1}$, defined in a neighbourhood of $\varepsilon = 0$, such that $f(\lambda(\varepsilon), \varepsilon)=0$. 

Let us first differentiate this relation with respect to $\varepsilon$ and estimate it in $(\lambda_k, 0)$
\begin{align*}
\frac{\partial f}{\partial t}(\lambda_k,0) \, \lambda'(0) + \frac{\partial f}{\partial \varepsilon} (\lambda_k,0) = 0.
\end{align*}
Since $n \geq 1$, $\frac{\partial f}{\partial \varepsilon}(\lambda_k,0) = 0$ and we conclude that $\lambda'(0) = 0$. We can differentiate $n+1$ times the identity $f(\lambda(\varepsilon), \varepsilon)=0$ and each time use the relations obtained in the previous step. We have
\begin{align*}
\frac{\partial f}{\partial t}(\lambda_k,0) \, \lambda^{(j)}(0) + \frac{\partial^{j}f}{\partial \varepsilon^j}(\lambda_k,0) =0, \ j=1,\ldots,n+1.
\end{align*}
Thanks to the fact that $n + k - r > n + 1$ for all $r=0,\ldots,k-2$, and using \eqref{eq:Q_k-1_not_vanish}, we deduce
\begin{align*}
\frac{\partial^{j}f}{\partial \varepsilon^j}(\lambda_k,0) =0, \ j=1,\ldots,n, \qquad
\frac{\partial^{n+1}f}{\partial \varepsilon^{n+1}} (\lambda_k,0) = - C_k (n+1)! Q_{k-1}(\lambda_k,0) \neq 0.
\end{align*}
Then
\begin{align*}
\lambda^{(j)}(0) = 0 , \ j=1,\ldots,n, \qquad \lambda^{(n+1)}(0) = - C_k (n+1)! 
\end{align*}
and we conclude that $\lambda(\varepsilon) = \lambda_k - C_k \varepsilon^{n+1} + o(\varepsilon^{n+1})$ as $\varepsilon \to 0$.
\end{proof}

\begin{proof}[Proof of Theorem \ref{theorem:at_least_one_nodal_line}]
We can assume without loss of generality that $b=0$ and moreover, by Lemma \ref{lemma:Riemannmapping}, that $\Omega$ satisfies \eqref{eq:rectified_domain}. Let $\varphi_k$ have a zero of order $h /2\geq2$ at $0$, corresponding to
\begin{equation}\label{eq:n_h_relation}
n =\frac{h }{2} -1 \geq 1
\end{equation}
arcs of nodal line ending at $0$. Denote by $\Gamma$ any such piece of nodal line and let $a\in\Gamma$. We shall take advantage of the min-max characterization of eigenvalues, which we exploit by constructions suitable finite dimensional spaces of competitor functions. 

{\bf Step 1.} Construction of the space of competitors.
As shown in \cite[Lemma 4.1]{BonnaillieNorisNysTerracini2013}, we can choose the discontinuity of $\theta_a$ on the piece of $\Gamma$ connecting $a$ with the origin, so that
\[
\frac{\nabla \theta_a}{2}=A_a \quad \text{ globally in } \Omega\setminus D_{|a|}^+(0).
\]
For $i=1, \ldots, k$ we define
\begin{align}\label{eq:v_ext_def}
v_{i}^{ext}(x) = e^{i \frac{\theta_{a}}{2}(x)} \varphi_{i}(x), \qquad x\in \Omega\setminus D_{2|a|}^+(0).
\end{align}
Since $e^{i \frac{\theta_{a}}{2}}$  is univalued and regular in $\Omega \backslash D^+_{|a|}(0)$, the gauge invariance implies
\begin{equation}\label{eq:equationuext2}
 (i \nabla + A_{a})^{2} v_{i}^{ext} = \lambda_{i} p(x) v_{i}^{ext} \quad \text{in } \Omega \backslash D^+_{2|a|}(0).
 \end{equation}
In the interior of the small disk we take the solution of the magnetic equation having the same boundary trace, that is, for $i=1,\ldots,k$,
\begin{align}\label{eq:v_k_int_equation}
(i \nabla + A_{a})^{2} v_{i}^{int} = \lambda_{i} p(x) v_{i}^{int} \quad \text{in }  D^+_{2|a|}(0), \qquad 
v_{i}^{int} = e^{i \frac{\theta_{a}}{2}} \varphi_{i} \quad \text{on } \partial D^+_{2|a|}(0).
\end{align}
By uniqueness, $v_i^{int}$ can also be characterized as the function which achieves
\begin{align}\label{eq:v_int_def}
\inf\left\{ \int_{D_{2|a|}^+(0)} \left[ |(i\nabla+A_a)v|^2-\lambda_i p(x) |v|^2 \right]\dx : \ v\in H^1_{A_a}(D^+_{2|a|}(0)), \ v=e^{i \frac{\theta_{a}}{2}} \varphi_{i} \text{ on } \partial D^+_{2|a|}(0) \right\}.
\end{align}

Though $v_i^{int}$ and $v_i^{ext}$ solve the same equation on the respective domains, the competitor functions defined as
\begin{align}\label{eq:v_i_def}
v_{i} = \left\{ \begin{aligned} & v_{i}^{int} \quad & D^+_{2|a|}(0) \\
                                & v_{i}^{ext} \quad & \Omega \backslash D^+_{2|a|}(0)  \end{aligned} \right.
\end{align}
do not solve the equation in $\Omega$. Indeed, we have, for all $\phi \in \mathcal{D}^{1,2}_{A_a}(\Omega)$,
\begin{align}\label{eq:v_i_equation}
\int_{\Omega} \left[ (i \nabla + A_{a}) v_{i} \cdot \overline{(i \nabla + A_{a}) \phi} - \lambda_{i} p(x) v_{i} \overline{\phi}\right]\dx 
= i \int_{\partial D^+_{2|a|}(0)} (i \nabla + A_{a}) (v_{i}^{ext} - v_{i}^{int} ) \cdot \nu \overline{\phi} \dsigma,
\end{align}
where we used the formula of integration by parts, \eqref{eq:equationuext2} and \eqref{eq:v_k_int_equation}.

{\bf Step 2.} Estimates on the single competitor functions. By Proposition \ref{proposition:asymptotic_laplacian_eigenfunctions}, $\varphi_i$ has the following behavior on $\partial D^+_{2|a|}(0)$, for $i<k$,
\[
\varphi_i|_{\partial D^+_{2|a|}(0)}= |a| c_1 \cos\theta + o(|a|),\quad \text{as } a\to0,
\]
with $c_1$ eventually $0$, whereas for $\varphi_k$ we have
\[
\varphi_k|_{\partial D^+_{2|a|}(0)}= |a|^{1+n } \left(c_{1+n } \cos[(1+n )\theta]+d_{1+n }\sin[(1+n )\theta]\right) + o(|a|^{1+n }), \quad \text{as } a\to0,
\]
with $c_{1+n}\neq0, d_{1+n}=0$ if $n$ is even and $c_{1+n}=0, d_{1+n}\neq0$ if $n$ is odd.
Since $a$ belongs to one of the nodal lines of $\varphi_k$, $\Gamma$, and the tangents to the nodal lines divide $\pi$ into equal angles, we have that $a/|a|\to e \not\in \{x_1=0\}$ and we recover the property \eqref{eq:unifrom_bound_Phi}. Hence Lemma \ref{lemma:v_i_int_estimates} applies providing the following estimates
\begin{equation}\label{eq:v_i_int_estimates}
\begin{split}
\|v_i^{int}\|_{L^2(\partial D^+_{2|a|}(0))}= O(|a|^{\frac{3}{2}}),\
\|v_i^{int}\|_{L^2(D^+_{2|a|}(0))}= O(|a|^2), \\
\|(i\nabla+A_a)v_i^{int}\cdot\nu\|_{L^2(\partial D^+_{2|a|}(0)\cap\Omega)}= O(|a|^{\frac{1}{2}}),
\end{split}
\end{equation}
for $i=1,\ldots,k-1$, and
\begin{gather}
\|v_k^{int}\|_{L^2(\partial D^+_{2|a|}(0))}\sim |a|^{\frac{3}{2}+n },\
\|v_k^{int}\|_{L^2(D^+_{2|a|}(0))}\sim |a|^{2+n }, \label{eq:v_k_estimate1} \\
\|(i\nabla+A_a)v_k^{int}\cdot\nu\|_{L^2(\partial D^+_{2|a|}(0)\cap\Omega)} \sim |a|^{\frac{1}{2}+n }, \ 
\int_{\partial D_{2|a|}^+(0)} (i\nabla+A_a)v_k^{int} \cdot\nu v_k^{int} \dsigma \sim |a|^{2n+2}. \label{eq:v_k_estimate2}
\end{gather}

{\bf Step 3.} We claim that there exists a constant $C_{k} > 0$ such that
\begin{align}\label{eq:boundary_integral_v_k}
i \int_{\partial D^+_{2|a|}(0)} (i \nabla + A_{a}) (v_{k}^{ext} - v_{k}^{int} ) \cdot \nu \overline{v}_k \dsigma 
= - C_{k} |a|^{2 n +2} +o(|a|^{2 n +2}).
\end{align}
The asymptotic behavior is consequence of \eqref{eq:v_k_estimate2}.
Let us prove that the quantity we want to estimate is negative. To this aim, we extend the function $v_{k}^{ext}=e^{i\frac{\theta_a}{2}}\varphi_k$ to all $\Omega$. Such extension is continuous in $D^+_{2|a|}(0)$, since $\varphi_k$ vanishes on $\Gamma$ and $\theta_a$ is regular outside $\Gamma$, and solves
\begin{align*}
\left\{ \begin{aligned} (i \nabla + A_{a})^{2} v_{k}^{ext} & = \lambda_{k} p(x) v_{k}^{ext} \quad & \Omega \backslash \Gamma \\
               v_{k}^{ext} & = 0 \quad & \partial \Omega . \end{aligned} \right.
\end{align*} 
Since $v_k^{ext}=0$ on $\Gamma$, we can test this equation by $v_k^{ext}$ itself in $D^+_{2|a|}(0)$ and apply the formula of integration by parts to obtain
\[
i \int_{\partial D^+_{2|a|}(0)} (i \nabla + A_{a}) v_{k}^{ext} \cdot \nu \overline{v_{k}} \dsigma=-\int_{D^+_{2|a|}(0)} \left[ |(i\nabla+A_a)v_k^{ext}|^2-\lambda_k p(x) |v_k^{ext}|^2 \right]\dx.
\] 
On the other hand, by testing \eqref{eq:v_k_int_equation} by $v_k^{int}$ we obtain the same expression for $v_k^{int}$. By subtracting the two equalities, and recalling the characterization of $v_k^{int}$ in \eqref{eq:v_int_def}, we obtain that the boundary integral in \eqref{eq:boundary_integral_v_k} is negative.

{\bf Step 4.} Estimate of the eigenvalue.
Let
\begin{align*}
F_{k} = \left\{ \Phi = \sum_{i=1}^{k} \alpha_{i} v_{i}: \ \alpha=(\alpha_1,\ldots,\alpha_k)\in\R^k \right\} \subset \mathcal{D}^{1,2}_{A_a}(\Omega),
\end{align*}
where $v_i$ are the competitor functions defined in \eqref{eq:v_i_def}.
By \eqref{eq:v_i_int_estimates}, \eqref{eq:v_k_estimate1} we have, for $i\neq j$,
\[
\left| \int_\Omega p(x) v_i\bar{v}_j\dx \right| = \left| \int_\Omega p(x)\varphi_i\bar{\varphi}_j\dx +
\int_{D_{2|a|}^+(0)} p(x) (v_i^{int}\bar{v}^{int}_j-\varphi_i\bar{\varphi}_j)\dx \right| \leq C|a|^4
\]
(the last estimate improves to $|a|^{4+n}$ in case $i=k$ or $j=k$ ).
Therefore $F_k$ is $k$-dimensional subspace of $\mathcal{D}^{1,2}_{A_a}(\Omega)$ for $|a|$ sufficiently small and we have
\[
\lambda_k^a\leq \sup_{\Phi\in F_k} \frac{\|\Phi\|^2_{D^{1,2}_{A_a}(\Omega)}}{\int_\Omega p(x) |\Phi|^2\dx}.
\]
Relation \eqref{eq:v_i_equation} provides
\[
\|\Phi\|^2_{D^{1,2}_{A_a}(\Omega)}= \sum_{i,j=1}^k \alpha_i\alpha_j
\left\{ \lambda_i\int_{\Omega}p(x) \, v_i\bar{v}_j\dx + i \int_{\partial D_{2|a|}^+(0)}(i\nabla+A_a)(v_i^{ext}-v_i^{int})\cdot\nu \bar{v}_j \dsigma \right\}.
\]
Thus we can write
\[
\lambda_k^a 
\leq \sup_{\alpha\in\R^k} \frac{\alpha^T M \alpha}{\alpha^T N \alpha}
= \lambda_{max}(N^{-1}M),
\]
where $\alpha^T$ denotes the transposed of the vector $\alpha$, $\lambda_{max}(\cdot)$ is the largest eigenvalue of a matrix and $M,N$ are $k\times k$ matrices with entries
\[
m_{ij}=\lambda_i\int_{\Omega}p(x) \, v_i\bar{v}_j\dx + i \int_{\partial D_{2|a|}^+(0)}(i\nabla+A_a)(v_i^{ext}-v_i^{int})\cdot\nu \bar{v}_j \dsigma,
\qquad n_{ij}=\int_\Omega p(x) v_i\bar{v}_j \dx.
\]
By exploiting \eqref{eq:v_i_int_estimates}-\eqref{eq:v_k_estimate2}, we see that $M$ has the form in Lemma \ref{lemma:greatesteigenvalue2} and 
\[
N=\begin{pmatrix}
1+O(|a|^{4}) &           &  O( |a|^{4}) & O(|a|^{n +4})  \\
     O(|a|^4)            &  \ddots   &           & \vdots \\
                       &           & 1 + O( |a|^4)  & O(|a|^{n +4}) \\
O(|a|^{n +4})         &   \ldots  & O(|a|^{n +4}) & 1+  O( |a|^{2n +4}) 
\end{pmatrix}.
\]
By writing $N=Id+\mathcal{E}(|a|)$ we have $N^{-1}=\sum_{j=0}^{\infty} (-1)^j \mathcal{E}(|a|)^j \sim Id -\mathcal{E}(|a|)$ as $a\to0$, so that $N^{-1}$ has the same form as $N$.
Therefore $N^{-1}M$ has the same form as $M$ and we can apply  Lemma \ref{lemma:greatesteigenvalue2} obtaining
\[
\lambda_k^a\leq \lambda_k-C_k |a|^{2n+2} +o(|a|^{2n+2}).
\]
The result follows recalling that $2n+2=h$.
\end{proof}

\section{Frequency formula for magnetic eigenfunctions at boundary points}\label{sec:freq_formula}

Throughout this section we assume that $\Omega$ is regular and that $p(x)$ satisfies \eqref{eq:weight_assumptions}. Given a pole $b=(b_1,b_2)\in\mathbb{R}^2$, we recall the following notation from Section \ref{sec:rectified_boundary}
\[
\pi(b)=(0,b_2), \quad D_r^+(\pi(b))=\{ (x_1,x_2)\in\R^2:\, x_1^2+(x_2-b_2)^2<r^2, \, x_1>0 \}.
\]
We define a Almgren-type frequency function in $D_r^+(\pi(b))$ as follows.

\begin{definition} \label{def:EHN}
Let $b\in\C$, $r>0$ such that $b \in D_r^+(\pi(b))$ and $u \in H^1_{A_b}(D_r^+(\pi(b)))$ with $u=0$ on $\{ x_1=0 \}$. Let $\lambda \in\R$, and $p(x)$ satisfy \eqref{eq:weight_assumptions} in $D_r^+(\pi(b))$. We define
\begin{align} \label{eq:E}
E(u,r,\pi(b),\lambda,A_b) = \int_{D_{r}^{+}(\pi(b))} \left( |(i \nabla + A_b) u|^{2} - \lambda p(x)|u|^{2} \right) \, \mathrm{d}x,
\end{align}
\begin{align} \label{eq:H}
H(u,r,\pi(b)) = \frac{1}{r} \int_{\partial D_{r}^{+}(\pi(b))} |u|^{2} \, \mathrm{d} \sigma,
\end{align}
and the frequency function
\begin{align} \label{eq:N}
N(u,r,\pi(b),\lambda,A_b) = \frac{E(u,r,\pi(b),\lambda,A_b)}{H(u,r,\pi(b))}.
\end{align}
\end{definition}

In the notation above we keep track of all the parameters involved, apart from the weight $p$, since we will need to let them change from section to section. The weight is not explicitly mentioned because it does not play a significant role, as long as it satisfies \eqref{eq:weight_assumptions}.

In particular, in this section we will estimate the frequency function for $u=\varphi_k^a$ and $\lambda=\lambda_k^a$. We shall omit the index $k$ since we will work with a fixed $k$ from now up to Subsection \ref{subsec:theorem_no_nodal_lines}. By Lemma \ref{lemma:Riemannmapping}, we can assume that $\partial \Omega$ is locally flat near the origin, so that we consider the following equation
\begin{align} \label{eq:probleminhalfball}
\left\{ \begin{aligned} (i \nabla + A_{a})^{2} \varphi^a & = \lambda^a p(x) \varphi^a  \qquad & D_{2r_0}^{+}(0)  \\
                        \varphi^a & = 0  \qquad & \{x_1 = 0\} .
                        \end{aligned} \right.
\end{align}
Here $r_0$ is chosen such that
\begin{equation}\label{eq:r_0}
r_0 < (2\lambda^a \|p\|_{L^\infty})^{-1/2} \text{ for $|a|$ sufficiently small},
\end{equation}
which is possible due to the fact that $p$ is bounded and that $\lambda^a\to\lambda$ as $a\to0$, as recalled in Theorem \ref{theorem:continuity}. For $r<r_0$ and $|a|<r$ we have that $D_r^+(\pi(a))\subset D_{2r_0}^+(0)$ so that, for such $r$ and $a$, the frequency function for solutions of \eqref{eq:probleminhalfball} is well defined.

\subsection{Estimates on $H(\varphi^a,r,\pi(a))$}

We can compute the derivative of $H$ with respect to $r$ similarly to the standard frequency function for non-magnetic eigenfunctions.
\begin{lemma} \label{lemma:derivativeofH}
If $\varphi^a$ is a solution of \eqref{eq:probleminhalfball}, for $a_1 < r<r_0$ we have
\begin{align}\label{eq:derivative_H}
\frac{\mathrm{d}}{\mathrm{d}r} H(\varphi^a,r,\pi(a)) = -\frac{2 i}{r} \int_{\partial D_{r}^{+}(\pi(a))} (i \nabla + A_a) \varphi^a \cdot \nu \overline{\varphi^a} \, \mathrm{d} \sigma = \frac{2}{r} E(\varphi^a,r,\pi(a),\lambda^a, A_a).
\end{align} 
\end{lemma}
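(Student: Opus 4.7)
The plan is to compute the derivative of $H$ directly from its definition via polar coordinates centered at $\pi(a)$, then convert the emerging ordinary gradient into the magnetic gradient, and finally apply the magnetic Green's identity together with the eigenvalue equation.

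\textbf{Step 1 (Differentiate $H$ in polar coordinates).} I parametrize a neighborhood of $\pi(a)=(0,a_2)$ by $x=\pi(a)+\rho e^{i\theta}$ with $\theta\in(-\pi/2,\pi/2)$, so that the curved part of $\partial D_r^+(\pi(a))$ is $\{\pi(a)+re^{i\theta}:\theta\in(-\pi/2,\pi/2)\}$ and its outward unit normal at such a point equals $\nu=e^{i\theta}$. Since $\varphi^a=0$ on $\{x_1=0\}$, the flat part of $\partial D_r^+(\pi(a))$ contributes nothing to $H$, hence
\[
H(\varphi^a,r,\pi(a))=\int_{-\pi/2}^{\pi/2}\bigl|\varphi^a(\pi(a)+re^{i\theta})\bigr|^2 d\theta.
\]
Differentiating under the integral sign (which is legitimate since $\varphi^a$ is smooth in $D^+_{2r_0}(0)\setminus\{a\}$, and for $a_1<r<r_0$ we have $a\notin\partial D_r^+(\pi(a))$) and using $\partial_\rho\varphi^a=\nabla\varphi^a\cdot\nu$ on the curve, I obtain
\[
\frac{d}{dr}H(\varphi^a,r,\pi(a))=\frac{2}{r}\int_{\partial D_r^+(\pi(a))}\mathrm{Re}\bigl(\overline{\varphi^a}\,\nabla\varphi^a\cdot\nu\bigr)\,d\sigma.
\]

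\textbf{Step 2 (Pass to the magnetic gradient).} From $(i\nabla+A_a)\varphi^a\cdot\nu=i\nabla\varphi^a\cdot\nu+(A_a\cdot\nu)\varphi^a$ I get the identity
\[
\overline{\varphi^a}\,\nabla\varphi^a\cdot\nu = -i\,\overline{\varphi^a}\,(i\nabla+A_a)\varphi^a\cdot\nu + i\,(A_a\cdot\nu)\,|\varphi^a|^2.
\]
The second summand is purely imaginary and drops out under $\mathrm{Re}$, while $\mathrm{Re}(-iz)=\mathrm{Im}(z)$ converts the first summand. Since the left-hand side of Step~1 is real, the resulting boundary integrand has vanishing imaginary part when summed over the whole boundary (as will also be confirmed in Step~3), and one may write
\[
\frac{d}{dr}H(\varphi^a,r,\pi(a))=-\frac{2i}{r}\int_{\partial D_r^+(\pi(a))}(i\nabla+A_a)\varphi^a\cdot\nu\,\overline{\varphi^a}\,d\sigma,
\]
which is the first claimed equality.

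\textbf{Step 3 (Magnetic Green's identity and the equation).} Testing \eqref{eq:probleminhalfball} against $\overline{\varphi^a}$ on $D_r^+(\pi(a))$ and integrating by parts through the standard magnetic Green formula
\[
\int_{\Omega'}(i\nabla+A_a)^2 u\,\bar v\,dx=\int_{\Omega'}(i\nabla+A_a)u\cdot\overline{(i\nabla+A_a)v}\,dx+i\int_{\partial\Omega'}(i\nabla+A_a)u\cdot\nu\,\bar v\,d\sigma,
\]
applied with $u=v=\varphi^a$ and $\Omega'=D_r^+(\pi(a))$, yields
\[
\int_{D_r^+(\pi(a))}\lambda^a p(x)|\varphi^a|^2\,dx=\int_{D_r^+(\pi(a))}|(i\nabla+A_a)\varphi^a|^2\,dx+i\int_{\partial D_r^+(\pi(a))}(i\nabla+A_a)\varphi^a\cdot\nu\,\overline{\varphi^a}\,d\sigma,
\]
where the flat boundary part drops since $\varphi^a=0$ there. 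Rearranging gives
\[
-i\int_{\partial D_r^+(\pi(a))}(i\nabla+A_a)\varphi^a\cdot\nu\,\overline{\varphi^a}\,d\sigma=E(\varphi^a,r,\pi(a),\lambda^a,A_a),
\]
which confirms that the boundary integral is purely imaginary (so the formula in Step~2 is consistent) and produces the second equality in \eqref{eq:derivative_H} after multiplying by $2/r$.

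\textbf{Main obstacle.} The only subtlety is justifying the integration by parts in Step~3 in spite of the Aharonov--Bohm singularity at the pole $a\in D_r^+(\pi(a))$. I would handle it by excising a small disk $D_\varepsilon(a)$, applying the identity on $D_r^+(\pi(a))\setminus\overline{D_\varepsilon(a)}$ (where $\varphi^a$ is smooth), and letting $\varepsilon\to 0^+$: the boundary contribution on $\partial D_\varepsilon(a)$ vanishes thanks to the behavior $\varphi^a\sim |x-a|^{1/2}$ and $|(i\nabla+A_a)\varphi^a|=O(|x-a|^{-1/2})$ from Proposition \ref{proposition:asymptotic_expansion_eigenfunction}, together with the Hardy-type inequality \eqref{eq:hardy}. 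Similarly, the differentiation under the integral sign in Step~1 is valid since $r\mapsto\varphi^a(\pi(a)+re^{i\theta})$ is smooth for $a_1<r<r_0$ and uniform $L^1$ domination holds on a neighborhood of each admissible $r$.
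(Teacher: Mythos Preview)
Your proof is correct and follows essentially the same route as the paper: rewrite $H$ so that the $1/r$ factor disappears (you via polar coordinates, the paper via the rescaling $y=(x-\pi(a))/r$), differentiate under the integral sign to produce $\mathrm{Re}(\overline{\varphi^a}\,\nabla\varphi^a\cdot\nu)$, convert to the magnetic gradient using that $(A_a\cdot\nu)|\varphi^a|^2$ has vanishing real part, and then test the equation by $\varphi^a$ to identify the boundary term with $E$. Your added discussion of excising $D_\varepsilon(a)$ to justify the integration by parts is a detail the paper leaves implicit in this lemma.
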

\begin{proof}
By the change of variables $y = (x- \pi(a))/r$ we have
\begin{align*}
H(\varphi^a,r,\pi(a))=\frac{1}{r} \int_{\partial D_{r}^{+}(\pi(a))} |\varphi^a|^{2}(x) \, \mathrm{d}\sigma(x) = \int_{\partial D_{1}^{+}(0)} |\varphi^a|^{2}(ry + \pi(a)) \, \mathrm{d}\sigma(y).
\end{align*}
By taking the derivative with respect to $r$ we obtain
\begin{align*}
\frac{\mathrm{d}}{\mathrm{d}r} H(\varphi^a,r,\pi(a)) & = 2 Re \int_{\partial D_{1}^{+}(0)} \nabla \varphi^a (ry+\pi(a)) \cdot y\, \overline{\varphi^a(ry+\pi(a))} \, \mathrm{d} \sigma(y) \\
& = \frac{2}{r} Re \int_{\partial D_{r}^{+}(\pi(a))} \nabla \varphi^a \cdot \nu \, \overline{\varphi^a} \, \mathrm{d}\sigma(x) \\
 & = \frac{2}{r} Re \left\{-i\int_{\partial D_r^+(\pi(a))} (i\nabla + A_a) \varphi^a \cdot \nu \overline{\varphi^a} \, \mathrm{d}\sigma\right\},
\end{align*}
where we used the fact that $Re(-i|\varphi^a|^2 A_a\cdot\nu)=0$.
On the other hand, by testing equation \eqref{eq:probleminhalfball} by $\varphi^a$ and integrating by parts, we see that
\begin{align} \label{eq:newE}
E(\varphi^a, r,\pi(a), \lambda^a, A_a) = - i \int_{\partial D_r^+(\pi(a))} (i \nabla + A_a) \varphi^a \cdot \nu \, \overline{\varphi^a}\dsigma \quad \in\R,
\end{align}
which concludes the proof.
\end{proof}

We can prove the following estimate.
\begin{lemma} \label{lemma:inferiorboundH}
Let $\varphi^a$ be a solution of \eqref{eq:probleminhalfball} and $r_0$ be as in \eqref{eq:r_0}.
If $a_1 < r_1 < r_2 < r_{0}$ then
\begin{align} \label{eq:inferiorboundH}
\frac{H(\varphi^a,r_2,\pi(a))}{H(\varphi^a,r_1,\pi(a))} \geq e^{-C r_0^2}\ \left(\frac{r_2}{r_1}\right)^{2}.
\end{align}
If $|a|$ is sufficiently small, we can choose $C=4 \lambda \|p\|_{L^\infty}$, where $\lambda$ is the limit of $\lambda^a$ as $a\to0$. 
\end{lemma}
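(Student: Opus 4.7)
\begin{proof}[Proof plan of Lemma~\ref{lemma:inferiorboundH}]
The strategy is to derive a pointwise lower bound for $N(\varphi^a,r,\pi(a),\lambda^a,A_a)$ on the interval $(a_1,r_0)$ and then integrate the logarithmic derivative of $H$. Throughout I shall assume $H(\varphi^a,r,\pi(a))>0$ on the relevant range of $r$, which is automatic once the desired differential inequality is in place (if $H$ vanishes at some interior radius, one argues on each subinterval where $H>0$ and passes to the limit).

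\textbf{Step 1: differential identity.} By Lemma~\ref{lemma:derivativeofH},
\[
\frac{\mathrm{d}}{\mathrm{d}r}\log H(\varphi^a,r,\pi(a))=\frac{2}{r}\,N(\varphi^a,r,\pi(a),\lambda^a,A_a),
\]
so that everything reduces to estimating $N$ from below.

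\textbf{Step 2: bounding $E$ from below using the two inequalities from Section~\ref{sec:rectified_boundary}.} Write
\[
E(\varphi^a,r,\pi(a),\lambda^a,A_a)=\int_{D_r^+(\pi(a))}|(i\nabla+A_a)\varphi^a|^2\dx-\lambda^a\int_{D_r^+(\pi(a))}p(x)|\varphi^a|^2\dx.
\]
Since $\varphi^a$ vanishes on $\{x_1=0\}$, the Poincaré inequality \eqref{eq:Poincaré} applied to $\varphi^a$ gives
\[
\int_{D_r^+(\pi(a))}|\varphi^a|^2\dx\leq r^2 H(\varphi^a,r,\pi(a))+r^2\int_{D_r^+(\pi(a))}|(i\nabla+A_a)\varphi^a|^2\dx.
\]
Multiplying by $\lambda^a\|p\|_{L^\infty}$ and inserting in the expression for $E$ yields
\[
E\geq\bigl(1-\lambda^a\|p\|_{L^\infty}r^2\bigr)\int_{D_r^+(\pi(a))}|(i\nabla+A_a)\varphi^a|^2\dx-\lambda^a\|p\|_{L^\infty}r^2\,H(\varphi^a,r,\pi(a)).
\]
Thanks to \eqref{eq:r_0} the coefficient $1-\lambda^a\|p\|_{L^\infty}r^2$ is nonnegative, so I can invoke \eqref{eq:inequality2} (Lemma~\ref{lemma:inequality2}) to replace $\int|(i\nabla+A_a)\varphi^a|^2\dx$ by the smaller quantity $H(\varphi^a,r,\pi(a))$. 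This gives
\[
E(\varphi^a,r,\pi(a),\lambda^a,A_a)\geq\bigl(1-2\lambda^a\|p\|_{L^\infty}r^2\bigr)H(\varphi^a,r,\pi(a)),
\]
i.e.\ $N(\varphi^a,r,\pi(a),\lambda^a,A_a)\geq 1-2\lambda^a\|p\|_{L^\infty}r^2$.

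\textbf{Step 3: integration.} Combining with Step~1,
\[
\frac{\mathrm{d}}{\mathrm{d}r}\log H(\varphi^a,r,\pi(a))\geq\frac{2}{r}-4\lambda^a\|p\|_{L^\infty}r.
\]
Integrating over $[r_1,r_2]\subset(a_1,r_0)$ and using $r_2^2-r_1^2\leq r_0^2$ we obtain
\[
\log\frac{H(\varphi^a,r_2,\pi(a))}{H(\varphi^a,r_1,\pi(a))}\geq 2\log\frac{r_2}{r_1}-2\lambda^a\|p\|_{L^\infty}r_0^2,
\]
which is exactly \eqref{eq:inferiorboundH}. The constant $C=4\lambda\|p\|_{L^\infty}$ claimed in the statement then follows from $\lambda^a\to\lambda$ as $a\to 0$ (Theorem~\ref{theorem:continuity}), which ensures $2\lambda^a\|p\|_{L^\infty}\leq 4\lambda\|p\|_{L^\infty}$ for $|a|$ sufficiently small.
\end{proof}

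The only delicate point is Step~2: one must arrange the two test-function inequalities (Poincaré and the boundary trace estimate) in the right order so that the sign of the coefficient $1-\lambda^a\|p\|_{L^\infty}r^2$ is correct before applying \eqref{eq:inequality2}; this is where condition \eqref{eq:r_0} on $r_0$ is used.
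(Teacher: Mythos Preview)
Your proof is correct and follows essentially the same route as the paper: both arguments use Lemma~\ref{lemma:derivativeofH} to reduce to a lower bound on $E$ (equivalently $N$), combine the Poincar\'e inequality \eqref{eq:Poincaré} with the trace inequality \eqref{eq:inequality2} to obtain $N\geq 1-2\lambda^a\|p\|_{L^\infty}r^2$, and then integrate. The only cosmetic difference is that the paper first packages \eqref{eq:Poincaré} and \eqref{eq:inequality2} into the single inequality $\frac{1}{r^2}\int_{D_r^+}|\varphi^a|^2\dx\leq 2\int_{D_r^+}|(i\nabla+A_a)\varphi^a|^2\dx$ before inserting it into $E$, whereas you keep the $H$ term from Poincar\'e explicit and apply \eqref{eq:inequality2} afterwards; the resulting bound and constants are identical.
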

\begin{proof}
By combining Lemmas \ref{lemma:Poincaré} and \ref{lemma:inequality2} we see that
\begin{align} \label{eq:inequality3}
 \frac{1}{r^{2}} \int_{D_{r}^{+}(\pi(a))} |\varphi^a|^{2}\dx \leq 2 \int_{D_{r}^{+}(\pi(a))} |(i\nabla +A_a) \varphi^a |^{2}\dx.
\end{align}
Next we apply Lemma \ref{lemma:derivativeofH} and, in order, the inequalities \eqref{eq:inequality3} and \eqref{eq:inequality2} in the following way
\begin{align*}
\frac{\mathrm{d}}{\mathrm{d}r}H(\varphi^a,r,\pi(a)) & 
= \frac{2}{r} \int_{D_{r}^{+}(\pi(a))}(|(i \nabla + A_a) \varphi^a |^{2} - \lambda^a p(x) |\varphi^a|^{2})\dx \\
          & \geq \frac{2}{r} (1 - 2 \lambda^a \|p\|_{L^\infty} r^{2}) \int_{D_{r}^{+}(\pi(a))} |(i \nabla + A_a) \varphi^a|^{2} dx \\
          & \geq \frac{2}{r} (1 - 2 \lambda^a \|p\|_{L^\infty} r^{2}) H(\varphi^a,r,\pi(a))).
\end{align*}
Integrating the last inequality between $r_1$ and $r_2$ we obtain 
\begin{align*}
\log \left(\frac{H(\varphi^a,r_2,\pi(a))}{H(\varphi^a,r_1,\pi(a))}\right) & \geq \log \left( \frac{r_2}{r_1}\right)^2 - 2 \lambda^a \|p\|_{L^\infty} ( r_2^2 - r_1^2) \\
         & \geq \log \left( \frac{r_2}{r_1}\right)^2 - 2 \lambda^a \|p\|_{L^\infty} r_0^2.
\end{align*}
Taking the exponential of both sides and recalling that $\lambda^a\to\lambda$ we obtain the statement. 
\end{proof}

\begin{remark}\label{rem:Egeq0}
The previous proof shows that $E(\varphi^a,r,\pi(a),\lambda^a,A_a)\geq0$ for $r<r_0$.
\end{remark}

\subsection{Estimates on $E(\varphi^a,r,\pi(a),\lambda^a,A_a)$}

We will need the following Pohozaev-type identity for the solution $\varphi^a$ of \eqref{eq:probleminhalfball}. Also compare with \cite[Section 4]{FFT2011}.

\begin{lemma}[Pohozaev-type identity] \label{lemma:pohozaev}
If $\varphi^a$ is the solution of \eqref{eq:probleminhalfball}, the following identity is valid for $a_1 < r <r_0$
\begin{equation} \label{eq:pohozaev}
\begin{split}
& \frac{r}{2} \int_{\partial D_{r}^{+}(\pi(a))}\left\{ | (i \nabla + A_a) \varphi^a |^{2} - 2 | ( i \nabla + A_a) \varphi^a \cdot \nu |^{2} - \lambda^a p |\varphi^a|^{2} \right\}\dsigma \\
& + \lambda^a \int_{D_{r}^{+}(\pi(a))} |\varphi^a|^{2} \left(p + \frac{\nabla p\cdot (x-\pi(a))}{2}\right) \dx 
+ M_a   = 0,
\end{split}
\end{equation}
where
\begin{equation}\label{eq:M_a_def}
\begin{split}
M_a=\lim_{\eps\to0} \int_{\partial D_\eps(a)} \left\{ Re \left[  (i \nabla + A_a) \varphi^a \cdot \nu \, \overline{ ( i \nabla + A_a) \varphi^a \cdot (x-\pi(a)) } \right] \right.\\
\left. - \frac{1}{2} | ( i \nabla + A_a) \varphi^a |^2 (x - \pi(a)) \cdot \nu \right\} \dsigma.
\end{split}
\end{equation}
\end{lemma}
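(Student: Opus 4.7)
The plan is to follow the classical Pohozaev multiplier argument adapted to the magnetic setting: multiply equation \eqref{eq:probleminhalfball} by $\overline{(x-\pi(a))\cdot(i\nabla+A_a)\varphi^a}$, take real parts, and integrate on the punctured half-disk $D_r^+(\pi(a))\setminus\overline{D_\eps(a)}$ for small $\eps>0$, letting $\eps\to 0$ at the end. The rest amounts to bookkeeping of the three boundary contributions via the divergence theorem. Below I write $Y=x-\pi(a)$ for brevity.

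The engine of the calculation is the pointwise divergence identity, valid in $D_r^+(\pi(a))\setminus\{a\}$ where $A_a$ is smooth and has vanishing curl:
\[
\nabla\cdot\Bigl[|(i\nabla+A_a)\varphi^a|^2\,Y-2\,\mathrm{Re}\bigl(\overline{Y\cdot(i\nabla+A_a)\varphi^a}\,(i\nabla+A_a)\varphi^a\bigr)-\lambda^a p|\varphi^a|^2\,Y\Bigr]=-\lambda^a\bigl(2p+\nabla p\cdot Y\bigr)|\varphi^a|^2.
\]
I would derive this identity by a local gauge reduction: on any simply connected open subset $U\subset D_r^+(\pi(a))\setminus\{a\}$, the closedness of $A_a$ gives $A_a=\nabla\chi$ for some real $\chi\in C^\infty(U)$; setting $\psi:=e^{-i\chi}\varphi^a$ one obtains a (complex-valued) classical solution of $-\Delta\psi=\lambda^a p\,\psi$ in $U$, and direct computation shows $|(i\nabla+A_a)\varphi^a|^2=|\nabla\psi|^2$, $|\varphi^a|^2=|\psi|^2$, and $\mathrm{Re}[\overline{Y\cdot(i\nabla+A_a)\varphi^a}\,(i\nabla+A_a)\varphi^a]=\mathrm{Re}[\overline{Y\cdot\nabla\psi}\,\nabla\psi]$. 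Thus the identity is gauge invariant and it suffices to prove it for $\psi$. For $\psi$ it is the standard complex Pohozaev computation: multiplying $-\Delta\psi=\lambda^a p\,\psi$ by $\overline{Y\cdot\nabla\psi}$ and taking real parts, the algebraic identities $2\mathrm{Re}[-\Delta\psi\,\overline{Y\cdot\nabla\psi}]=\nabla\cdot[|\nabla\psi|^2 Y-2\mathrm{Re}(\overline{Y\cdot\nabla\psi}\,\nabla\psi)]-(N-2)|\nabla\psi|^2$ and $2\mathrm{Re}[\lambda^a p\,\psi\,\overline{Y\cdot\nabla\psi}]=\lambda^a p\,Y\cdot\nabla|\psi|^2=\nabla\cdot(\lambda^a p|\psi|^2 Y)-\lambda^a(2p+\nabla p\cdot Y)|\psi|^2$ combine to give exactly the displayed identity; the residual term $(N-2)|\nabla\psi|^2$ vanishes because $N=2$.

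With the pointwise identity established, integration over $D_r^+(\pi(a))\setminus\overline{D_\eps(a)}$ and the divergence theorem split the boundary into three pieces. On the outer arc $\partial D_r^+(\pi(a))\cap\{x_1>0\}$, the relation $Y=r\nu$ gives $Y\cdot(i\nabla+A_a)\varphi^a=r\,(i\nabla+A_a)\varphi^a\cdot\nu$, and the integrand collapses to $r\{|(i\nabla+A_a)\varphi^a|^2-2|(i\nabla+A_a)\varphi^a\cdot\nu|^2-\lambda^a p|\varphi^a|^2\}$, which after dividing by two yields the first term of \eqref{eq:pohozaev}. On the flat segment $\{x_1=0\}\cap\partial D_r^+(\pi(a))$, one has $\nu=-e_1$ and $Y=(0,x_2-a_2)$, so $Y\cdot\nu=0$, $|\varphi^a|^2=0$, and since $\varphi^a=0$ there forces $\nabla\varphi^a\parallel e_1$ we also get $Y\cdot(i\nabla+A_a)\varphi^a=0$, so this piece contributes nothing. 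The bulk right-hand side integrates to $\lambda^a\int_{D_r^+(\pi(a))}(p+\tfrac{1}{2}\nabla p\cdot Y)|\varphi^a|^2\,dx$ after the division by two, yielding the second term of \eqref{eq:pohozaev}. The remaining contribution is the flux across the inner circle $\partial D_\eps(a)$, and this is precisely what \eqref{eq:M_a_def} names $M_a$ in the limit $\eps\to 0$. The main obstacle is to justify the existence of this limit: it relies on Proposition \ref{proposition:asymptotic_expansion_eigenfunction}, which yields $\varphi^a=O(|x-a|^{1/2})$ and $(i\nabla+A_a)\varphi^a=O(|x-a|^{-1/2})$ as $x\to a$, so that the integrands in \eqref{eq:M_a_def} are of order $|x-a|^{-1}$ on a curve of length $2\pi\eps$; the leading-order angular structure of the expansion controls the oscillation and ensures the integral converges to a well-defined value $M_a$, while the remainder contributes $o(1)$.
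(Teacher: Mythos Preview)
Your proof is correct and follows essentially the same Pohozaev multiplier approach as the paper: test the equation with $(x-\pi(a))\cdot(i\nabla+A_a)\varphi^a$ on the punctured half-disk, convert the interior terms to boundary fluxes via divergence identities, and pass to the limit $\eps\to 0$. The only organizational difference is that you package everything into a single pointwise divergence identity, which you derive by a local gauge reduction to the classical Pohozaev computation for $-\Delta\psi=\lambda^a p\,\psi$, whereas the paper works directly with the magnetic operator using the two separate identities $\mathrm{Re}\bigl(i(i\nabla+A_a)\varphi^a\cdot\overline{(i\nabla+A_a)\xi}\bigr)=\tfrac12\nabla\cdot(|(i\nabla+A_a)\varphi^a|^2 Y)$ and $\tfrac12\nabla\cdot(p|\varphi^a|^2 Y)=|\varphi^a|^2(p+\tfrac12\nabla p\cdot Y)+\mathrm{Re}(ip\varphi^a\overline{\xi})$ together with an integration by parts. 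Your explicit treatment of the flat boundary piece and of the existence of the limit defining $M_a$ (via Proposition~\ref{proposition:asymptotic_expansion_eigenfunction}) is somewhat more detailed than the paper's account, but the substance is the same.
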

\begin{proof}
We test the equation \eqref{eq:probleminhalfball} with the vector field $\xi=(i\nabla+A_a)\varphi^a\cdot (x-\pi(a))$ in $D_r^+(\pi(a))\setminus D_\eps(a)$. We need to remove a small ball around the singularity because $\nabla\varphi^a$ may be singular at $a$ (it is singular in the case that $\varphi^a$ has a zero of order $1/2$ at $a$). Multiplying by $i$ and taking the real part we obtain
\begin{align}\label{eq:xi_pohozaev}
Re \left\{ i \int_{D_{r}^{+}(\pi(a)) \backslash D_{\varepsilon}(a)} (i \nabla + A_a)^2 \varphi^a \, \overline{ \xi }\dx \right\} = Re \left\{ i \lambda^a \int_{D_{r}^{+}(\pi(a))  \backslash D_{\varepsilon}(a)} \varphi^a p(x) \, \overline{ \xi }\dx \right\}.
\end{align}
Similarly to \eqref{eq:divergence_u_square_x}, the following identity with the weight holds
\begin{align*}
\frac{1}{2} \nabla \cdot \left(p |\varphi^a|^2 (x-\pi(a)) \right)=
|\varphi^a|^2 \left(p+ \frac{\nabla p\cdot (x-\pi(a))}{2} \right) + Re \left( i p \varphi^a \overline{\xi} \right).
\end{align*}
It allows to rewrite the right hand side of \eqref{eq:xi_pohozaev} as
\[
\frac{\lambda^a}{2} \int_{\partial (D_{r}^{+}(\pi(a))\backslash D_{\varepsilon}(a))} p |\varphi^a|^{2} \, (x-\pi(a)) \cdot \nu \dsigma - \lambda^a \int_{D_{r}^{+}(\pi(a)) \backslash D_{\varepsilon}(a)} |\varphi^a| ^{2} \left(p + \frac{\nabla p \cdot (x-\pi(a))}{2} \right) \dx.
\]
Taking the limit as $\varepsilon \to 0$, we obtain
\[
\frac{\lambda^a r}{2} \int_{\partial D_{r}^{+}(\pi(a))} p |\varphi^a|^{2} \dsigma - \lambda^a \int_{D_{r}^{+}(\pi(a))} |\varphi^a| ^{2} \left(p + \frac{\nabla p \cdot (x-\pi(a))}{2} \right) \dx.
\]
The integral on $\partial D_{\varepsilon}(a)$ vanishes as $\varepsilon \to 0$ because $|\varphi^a|$ behaves at least like $\varepsilon^{1/2}$ on $\partial D_\eps(a)$ by \eqref{eq:varphia_asymptotic_expansion}.
Next we look at the left-hand side of \eqref{eq:xi_pohozaev}. Integrating by parts and using the identity 
\begin{align*}
 Re \left( i (i \nabla + A_a) \varphi^a \cdot \overline{ (i \nabla + A_a) \xi } \right) = \frac{1}{2} \nabla \cdot ( | (i \nabla + A_a) \varphi^a|^{2} \, (x-\pi(a))) \qquad \text{a.e. }x\in D_r^+,
\end{align*}
we rewrite it as
\begin{align*}
& Re \left\{ i \int_{D_{r}^{+}(\pi(a))\backslash D_{\varepsilon}(a)} (i \nabla + A_a) \varphi^a \cdot \overline{ (i \nabla + A_a) \xi }\dx - \int_{\partial(D_{r}^{+}(\pi(a))\backslash D_{\varepsilon}(a))} (i \nabla + A_a) \varphi^a \cdot \nu \, \overline{ \xi }\dsigma \right\} \\
& = \int_{\partial(D_{r}^{+}(\pi(a))\backslash D_{\varepsilon}(a))} \left\{ \frac{1}{2} |(i \nabla + A_a) \varphi^a |^{2}(x-\pi(a))\cdot\nu - Re[ (i \nabla + A_a) \varphi^a \cdot \nu \, \overline{ \xi }] \right\} \dsigma \\
& = r \int_{\partial D_{r}^{+}(\pi(a))} \left\{\frac{1}{2} |(i \nabla + A_a) \varphi^a |^{2} - | (i \nabla + A_a) \varphi^a \cdot \nu |^{2}\right\}\dsigma \\
&+ \int_{\partial D_\eps(a)} \left\{ Re[(i\nabla +A_a)\varphi^a\cdot\nu\overline{\xi}] -\frac{1}{2}|(i\nabla+A_a)\varphi^a|^2(x- \pi(a))\cdot\nu  \right\}\dsigma.
\end{align*}
By taking the limit as $\eps\to0$ and by combining the two contributions in \eqref{eq:xi_pohozaev} we obtain the result.
\end{proof}

This identity allows to compute the derivative of $E(\varphi^a,r,\pi(a),\lambda^a,A_a)$ with respect to $r$.

\begin{lemma} \label{lemma:derivativeofE}
If $\varphi^a$ is a solution of \eqref{eq:probleminhalfball}, then for $a_1 < r<r_0$ we have
\begin{equation}\label{eq:derivative_E}
\begin{split}
\frac{\mathrm{d}}{\mathrm{d}r} E(\varphi^a,r,\pi(a),\lambda^a,A_a) & = 
2 \int_{\partial D_{r}^{+}(\pi(a))} | (i \nabla + A_a) \varphi^a \cdot \nu |^{2} \, \mathrm{d} \sigma \\
&-  \frac{\lambda^a}{r} \int_{D_{r}^{+}(\pi(a))} |\varphi^a|^{2} (2p+\nabla p\cdot (x-\pi(a)))\, \mathrm{d}x 
- \frac{2}{r} M_a ,
\end{split}
\end{equation}
where $M_a$ is defined in \eqref{eq:M_a_def}.
\end{lemma}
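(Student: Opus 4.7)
The plan is to differentiate the definition of $E$ directly in $r$ and then rewrite the resulting boundary integral using the Pohozaev identity \eqref{eq:pohozaev} of Lemma \ref{lemma:pohozaev}.

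First, I would compute $\frac{\mathrm{d}}{\mathrm{d}r}E$ via the coarea formula applied to the Lipschitz function $x\mapsto |x-\pi(a)|$ on the half-plane $\{x_1>0\}$ (or equivalently by the shape derivative formula, observing that the flat portion of $\partial D_r^+(\pi(a))$ does not sweep out any two-dimensional area as $r$ grows). This produces
\begin{equation*}
\frac{\mathrm{d}}{\mathrm{d}r}E(\varphi^a,r,\pi(a),\lambda^a,A_a) \;=\; \int_{\partial D_r^+(\pi(a))} \bigl(|(i\nabla+A_a)\varphi^a|^2 - \lambda^a p\,|\varphi^a|^2\bigr)\,\mathrm{d}\sigma.
\end{equation*}
The assumption $a_1<r<r_0$ guarantees that $a$ is interior to $D_r^+(\pi(a))$, so $\partial D_r^+(\pi(a))$ stays at positive distance from the pole and $\varphi^a$ is smooth in a neighborhood of it by standard elliptic regularity, making the pointwise boundary integral well defined.

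Next, I would invoke the Pohozaev identity \eqref{eq:pohozaev}. Multiplying it through by $2/r$ and isolating the combination $\int_{\partial D_r^+(\pi(a))}\bigl(|(i\nabla+A_a)\varphi^a|^2-\lambda^a p|\varphi^a|^2\bigr)\,\mathrm{d}\sigma$, I obtain
\begin{equation*}
\int_{\partial D_r^+(\pi(a))}\!\!\bigl(|(i\nabla+A_a)\varphi^a|^2 - \lambda^a p|\varphi^a|^2\bigr)\,\mathrm{d}\sigma = 2\!\int_{\partial D_r^+(\pi(a))}\!\!|(i\nabla+A_a)\varphi^a\cdot\nu|^2\,\mathrm{d}\sigma - \frac{\lambda^a}{r}\!\int_{D_r^+(\pi(a))}\!\!|\varphi^a|^2\bigl(2p+\nabla p\cdot(x-\pi(a))\bigr)\,\mathrm{d}x - \frac{2}{r}M_a,
\end{equation*}
the factor $2p+\nabla p\cdot(x-\pi(a))$ arising from $2\bigl(p+\tfrac12\nabla p\cdot(x-\pi(a))\bigr)$ in \eqref{eq:pohozaev}. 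Combining this with the formula from the first step gives \eqref{eq:derivative_E}.

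The only delicate point is the rigorous justification of the shape-derivative calculation in the presence of the Aharonov--Bohm singularity at $a$ and of the Dirichlet side $\{x_1=0\}$. The former is harmless because $r>a_1$ keeps $\partial D_r^+(\pi(a))$ uniformly away from $a$, so $\varphi^a$ and $(i\nabla+A_a)\varphi^a$ are $C^\infty$ in a tubular neighborhood of the boundary; the latter contributes nothing since either $\varphi^a=0$ on $\{x_1=0\}$ or the normal velocity of that side vanishes. Continuity of the right-hand side in $r$ then upgrades the a.e.\ identity to one valid for all $r\in(a_1,r_0)$.
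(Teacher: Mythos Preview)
Your proposal is correct and follows essentially the same approach as the paper: differentiate $E$ in $r$ to obtain the boundary integral $\int_{\partial D_r^+(\pi(a))}\bigl(|(i\nabla+A_a)\varphi^a|^2-\lambda^a p|\varphi^a|^2\bigr)\,\mathrm{d}\sigma$, then substitute the Pohozaev identity \eqref{eq:pohozaev}. The paper's proof is in fact just those two lines; your added remarks on why the differentiation is justified near the pole and along the flat side are sound elaborations of points the paper leaves implicit.
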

\begin{proof}
We have
\begin{align*}
\frac{\mathrm{d}}{\mathrm{d}r} E(\varphi^a,r,\pi(a),\lambda^a,A_a) & = \int_{\partial D_{r}^{+}(\pi(a))} \left( |(i \nabla + A_a) \varphi^a|^{2} - \lambda^a p |\varphi^a|^{2} \right) \, \mathrm{d}\sigma.
\end{align*}
Then we use the Pohozaev identity \eqref{eq:pohozaev} to conclude.
\end{proof}

In what follows we will estimate the remainder $M_a$ which appears in the derivative of $E$ in equation \eqref{eq:derivative_E}.

\begin{lemma}\label{lemma:M_a_rewritten}
Let $v(y)= e^{-i\theta(y)} \varphi^a(a_1y^2+a)$, defined in the set $\{ y:\ a_1y^2+a\in D_{2r_0}^+(0) \}$. Then
\begin{equation}\label{eq:M_a}
M_a=\pi Re\left\{\left(\frac{\partial v(0)}{\partial y}\right)^2\right\}.
\end{equation}
\end{lemma}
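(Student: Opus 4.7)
My strategy is to reduce the computation of $M_a$ to a calculation involving the smooth function $v$ from the double-cover lift. Writing $y(x)$ for a branch of $\sqrt{(x-a)/a_1}$ (so $a_1 y^2+a=x$) and choosing the discontinuity of $\theta_a$ so that $\theta_a(x)=2\theta(y)+\mathrm{const}$ off the cut, one has $A_a=\nabla\theta_a/2$ and $\varphi^a(x)=e^{i\theta(y)}v(y(x))$. A direct chain-rule computation then yields
\begin{equation*}
(i\nabla_x+A_a)\varphi^a(x)=i\,e^{i\theta(y)}\,\nabla_x\tilde v(x),\qquad\tilde v:=v\circ y,
\end{equation*}
because the two $\nabla\theta$-terms cancel. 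Since $v$ is real-valued by Lemma \ref{lemma:gauge_invariance}, so is $\nabla_x\tilde v$, and the unit phases $ie^{i\theta(y)}$ cancel in every bilinear expression appearing in \eqref{eq:M_a_def}; the integrand of $M_a$ thus reduces to the purely real, magnetic-free quantity
\begin{equation*}
(\nabla\tilde v\cdot\nu)\bigl(\nabla\tilde v\cdot(x-\pi(a))\bigr)-\tfrac12|\nabla\tilde v|^{2}(x-\pi(a))\cdot\nu .
\end{equation*}

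Next I would transfer the integral to the $y$-variable. The map $f(w)=a_1 w^{2}+a$ is conformal with $f'(w)=2a_1 w$, so parametrising $y=\rho\, e^{i\theta}$ with $\rho:=\sqrt{\varepsilon/a_1}$ and $\theta\in[0,\pi)$ traces $\partial D_\varepsilon(a)$ exactly once, with $\mathrm{d}\sigma_x=2\varepsilon\,\mathrm{d}\theta$, outward unit normal $\nu\leftrightarrow e^{2i\theta}$ and $x-\pi(a)\leftrightarrow a_1(1+\rho^{2}e^{2i\theta})$ under the identification $\R^{2}\cong\C$. The chain rule for $f$, together with $\partial_{\bar w}v=\overline{\partial_w v}$ for real $v$, gives
\begin{equation*}
\nabla_x\tilde v\;\longleftrightarrow\;\frac{\overline{\partial_w v(y)}}{a_1\,\bar w}.
\end{equation*}
Because the equation satisfied by $v$ on the double cover has smooth coefficients (it is a perturbed Laplacian with smooth weight), elliptic regularity makes $v$ of class $C^\infty$ in a neighbourhood of $0$, and one may Taylor-expand $\partial_w v(y)=c+O(|y|)$ with $c:=\partial v(0)/\partial y$.

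Substituting these expansions into the real integrand and using the dot-product rule $u\cdot v=\mathrm{Re}(U\overline V)$, the two contributions become
\begin{equation*}
\frac{\mathrm{Re}(ce^{i\theta})\,\mathrm{Re}(ce^{-i\theta})}{\varepsilon}+O(1)\quad\text{and}\quad\frac{|c|^{2}(\cos 2\theta+\rho^{2})}{2\varepsilon}+O(1),
\end{equation*}
and the elementary identity $2\mathrm{Re}(ce^{i\theta})\mathrm{Re}(ce^{-i\theta})=\mathrm{Re}(c^{2})+|c|^{2}\cos 2\theta$ makes the $\cos 2\theta$-contributions cancel exactly between them. This cancellation annihilates the $1/\varepsilon$ singularity and is the main point of the calculation; once it is observed, the leading contribution to $[\text{integrand}]\,\mathrm{d}\sigma_x$ is simply $\mathrm{Re}(c^{2})\,\mathrm{d}\theta$, plus terms that vanish as $\varepsilon\to 0$. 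Integrating over $\theta\in[0,\pi)$ and passing to the limit yields $M_a=\pi\,\mathrm{Re}(c^{2})$, the announced formula. The main technical obstacle is carrying out the conformal change of variables while keeping the orientation of $\nu$ (outward of $D_\varepsilon(a)$) consistent throughout; everything else is linear algebra and trigonometry.
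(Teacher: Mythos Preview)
Your argument is correct and reaches the same conclusion as the paper, but by a genuinely different route. The paper does not change variables to the double cover; instead it plugs the known local expansion \eqref{eq:varphia_asymptotic_expansion} of $\varphi^a$ at $a$ directly into the definition of $M_a$, shows that the term $\frac12|(i\nabla+A_a)\varphi^a|^2(x-\pi(a))\cdot\nu$ integrates to zero in the limit, computes the remaining term as $\frac{a_1\pi}{4}(c_1^2-d_1^2)$, and only at the end identifies $\partial_y v(0)=\frac{\sqrt{a_1}}{2}(c_1-id_1)$ to recover \eqref{eq:M_a}. Your approach bypasses Proposition~\ref{proposition:asymptotic_expansion_eigenfunction} entirely: you use the gauge identity $(i\nabla+A_a)\varphi^a=ie^{i\theta(y)}\nabla_x\tilde v$ to strip away the magnetic phase, then pull back to the $y$-variable via the conformal map $w\mapsto a_1w^2+a$ and Taylor-expand $v$ at $0$. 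This is more intrinsic (it works directly with the smooth object $v$) and explains structurally why only $\partial_y v(0)$ survives, whereas the paper's computation is more hands-on but requires the asymptotic expansion as input.

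Two small points of presentation. First, your sentence ``this cancellation annihilates the $1/\varepsilon$ singularity'' is slightly misleading: the $1/\varepsilon$ in the integrand is compensated by $\mathrm{d}\sigma_x=2\varepsilon\,\mathrm{d}\theta$, not by the $\cos 2\theta$ cancellation; the latter merely isolates the constant $\mathrm{Re}(c^2)$ (and in any case $\int_0^\pi\cos 2\theta\,\mathrm{d}\theta=0$ would dispose of those terms after integration). Second, the remainders you call $O(1)$ are more precisely $O(\varepsilon^{-1/2})$ at the level of the integrand (coming from the $O(\rho)$ correction to $\partial_w v$ divided by $a_1\rho$), hence $O(\varepsilon^{1/2})$ after multiplying by $\mathrm{d}\sigma_x$; this does not affect the limit.
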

\begin{proof}
First we shall prove that
\begin{align}\label{eq:M_a_rewritten}
M_a = \lim_{\varepsilon \to 0} \int_{\partial D_{\varepsilon}(a)} Re[ (i \nabla + A_a) \varphi^a \cdot \nu \, \overline{(i \nabla + A_a) \varphi^a \cdot (x-\pi(a))}] \dsigma = 
\frac{a_1 \pi}{4}(c_1^2-d_1^2),
\end{align}
where $c_1=c_1(a)$, $d_1=d_1(a)$ are the coefficients appearing in the asymptotic expansion \eqref{eq:varphia_asymptotic_expansion} of $\varphi^a$, and $a=(a_1,a_2)$.
Indeed, by differentiating \eqref{eq:varphia_asymptotic_expansion} we obtain
\[
(i \nabla + A_a) \varphi^a=\frac{i e^{i \theta_a /2}}{2\sqrt{r_a}}\left(c_1\cos\frac{\theta_a}{2}- d_1\sin\frac{\theta_a}{2}, c_1\sin\frac{\theta_a}{2}+d_1\cos\frac{\theta_a}{2}\right)+ o(r_a^{-1/2}) \quad\text{as } r_a\to0,
\]
and hence
\begin{align*}
|(i \nabla + A_a) \varphi^a|^{2} = \frac{1}{4 r_a}  (c_{1}^{2} + d_{1}^{2}) + o(r_a^{-1}) \quad\text{as } r_a\to0 .
\end{align*}
Moreover notice that $x-\pi(a) = (a_1,0) + \varepsilon (\cos\theta_a, \sin\theta_a)$ and $\nu = (\cos\theta_a, \sin\theta_a)$ on $\partial D_{\varepsilon}(a)$.
Therefore
\begin{align*}
\lim_{\varepsilon \to 0} \int_{\partial D_{\varepsilon}(a)} |(i \nabla + A_a) \varphi^a|^{2} (x-\pi(a)) \cdot \nu \dsigma =\\
\lim_{\varepsilon \to 0} \varepsilon \int_{0}^{2 \pi} \left[ \frac{1}{4 \varepsilon}(c_{1}^{2} + d_{1}^{2})  + o(\eps^{-1}) \right] \left[a_{1} \cos\theta_a  + \varepsilon \right] \, \mathrm{d}\theta_a
= 0,
\end{align*}
and we have estimated the second term in \eqref{eq:M_a_def}.
By a direct calculation one estimates the first term in \eqref{eq:M_a_def} and obtains \eqref{eq:M_a_rewritten}.

Now, by changing variables in \eqref{eq:varphia_asymptotic_expansion}, we obtain the following expansion for $v$
\[
v(r,\theta)= \sqrt{a_1} r \left(c_1\cos\theta+ d_1\sin\theta\right)+o(r)\quad\text{as } r\to0.
\]
Hence we have $\frac{\partial v(0)}{\partial y}=\frac{\sqrt{a_1}}{2}\left(c_1-i d_1\right)$ and \eqref{eq:M_a} follows by combining with \eqref{eq:M_a_rewritten}.
\end{proof}
From \eqref{eq:M_a_rewritten}, we remark that the constant $M_a$ is identically zero if the eigenfunction $\varphi^a$ has a zero of order stricly greater than $1$ at $a$.
\begin{lemma}\label{lemma:M_a_bound}
There exists $C>0$ not depending on $a_1$ such that
\[
\frac{|M_a|}{H(\varphi^a,2a_1,\pi(a))}\leq C.
\]
\end{lemma}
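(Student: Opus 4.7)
The plan is to pass to the double cover, reduce the statement to a uniform (in $a$) elliptic estimate, and then establish that estimate by a blow-up/compactness argument.

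The first step is to work with the rescaled double-cover function
\[
v(y) = e^{-i\theta(y)}\,\varphi^a(a_1 y^2 + a)
\]
already introduced in Lemma \ref{lemma:M_a_rewritten}. By Lemma \ref{lemma:gauge_invariance} and a direct computation, $v$ is real valued and solves
\[
-\Delta v = c_a(y)\, v, \qquad c_a(y) = 4 a_1^2 \lambda^a |y|^2\, p(a_1 y^2 + a),
\]
on $R_a := \{y\in\C:\ a_1 y^2 + a \in D_{2r_0}^+(0)\}$, with $v(0)=0$ (since $\varphi^a(a)=0$) and $v = 0$ on $\{1+\mathrm{Re}(y^2)=0\}$ (the preimage of $\{x_1=0\}$). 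Since $a_1\to 0$, the coefficient $c_a$ is uniformly bounded on any fixed compact set and in fact tends to $0$ uniformly there.

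By Lemma \ref{lemma:M_a_rewritten} one has
\[
|M_a| \leq \pi\bigl|\partial_y v(0)\bigr|^2 = \tfrac{\pi}{4}|\nabla v(0)|^2.
\]
On the other hand, the $2$-to-$1$ change of variable $y\mapsto x=a_1y^2+a$ has Jacobian $|dx|=2a_1|y||dy|$, and only the circular arc of $\partial D_{2a_1}^+(\pi(a))$ contributes to $H$ (since $\varphi^a = 0$ on the diameter). A direct computation then gives
\[
H(\varphi^a, 2a_1, \pi(a)) = \tfrac{1}{2}\int_\gamma |v|^2\,|y|\,d\sigma,
\]
where $\gamma = \{y:\ |1+y^2|=2,\ 1+\mathrm{Re}(y^2)>0\}$ is a fixed curve independent of $a$, on which $|y|^2 \in [1,3]$. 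Thus the lemma reduces to the uniform-in-$a$ estimate
\begin{equation}
|\nabla v(0)|^2 \leq C \int_\gamma |v|^2\,d\sigma. \tag{$\star$}
\end{equation}

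The final step is to prove $(\star)$ by contradiction. Let $U$ denote the fixed bounded domain enclosed by $\gamma$ and the flat piece of $\{1+\mathrm{Re}(y^2)=0\}$; for $|a|$ small, $U \subset R_a$ and $0\in U$. If $(\star)$ failed, we could find $a_n\to 0$ and solutions $v_n$ with $\int_\gamma |v_n|^2\,d\sigma = 1$ and $|\nabla v_n(0)|\to+\infty$. Interior elliptic regularity applied to $v_n$ (with uniformly bounded coefficients) gives $|\nabla v_n(0)| \leq C\|v_n\|_{L^2(U)}$, so $\|v_n\|_{L^2(U)}\to\infty$. Normalizing $\tilde v_n := v_n/\|v_n\|_{L^2(U)}$ yields $\|\tilde v_n\|_{L^2(U)}=1$ and $\int_\gamma |\tilde v_n|^2\,d\sigma \to 0$. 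Caccioppoli estimates produce uniform $H^1_{\mathrm{loc}}(U)$ bounds; up to a subsequence, $\tilde v_n$ converges weakly in $H^1_{\mathrm{loc}}(U)$ and strongly in $L^2_{\mathrm{loc}}(U)$ to some $\tilde v$. Since $c_{a_n}\to 0$, the limit $\tilde v$ is harmonic in $U$ and vanishes on the flat part of $\partial U$. Up-to-boundary elliptic regularity near the smooth curve $\gamma$, together with the $L^2(\gamma)$-bound going to zero, forces $\tilde v = 0$ on $\gamma$ as well. The maximum principle then gives $\tilde v \equiv 0$ on $U$, contradicting $\|\tilde v\|_{L^2(U)}=1$.

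The main obstacle will be the uniform $H^1$-control and strong trace convergence on $\gamma$ near the finitely many corners of $\partial U$, where $\gamma$ meets the flat piece. Since both boundary pieces are real analytic away from the corners, this can be handled by localizing the elliptic estimates via a cutoff supported away from the corners, together with the zero Dirichlet condition on the flat part to control the behavior at the corners.
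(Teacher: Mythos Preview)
Your reduction to the estimate $(\star)$ on the fixed double-cover domain is correct and matches the paper exactly (including the formula for $H$ as a weighted $L^2$-norm on $\gamma$). The difference is in how $(\star)$ is established.

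The paper does \emph{not} argue by compactness. Instead it invokes an appendix lemma constructing the Green's function $G$ for the perturbed operator $-\Delta - c_a$ on the fixed domain $U$ (with $\|c_a\|_\infty = O(a_1^2)$ small), obtaining the representation
\[
v(x) = -\int_\gamma v(y)\,\partial_\nu G(x,y)\,d\sigma(y),
\]
together with the bound $\|\partial_{x_i}G(0,\cdot)-\partial_{x_i}\Phi(0,\cdot)\|_{W^{2,q}} \leq C a_1^2$ where $\Phi$ is the Dirichlet Green's function of $-\Delta$ on $U$. Differentiating the representation at $x=0$ and using H\"older plus a trace inequality gives $|\nabla v(0)|^2 \leq C\int_\gamma v^2\,|y|\,d\sigma$ directly, with a constant coming from the fixed kernel $\partial_\nu\partial_{x_i}\Phi(0,\cdot)$.

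Your compactness argument, as written, has a genuine gap. Caccioppoli gives only $H^1_{\mathrm{loc}}(U)$ bounds on $\tilde v_n$, hence only $L^2_{\mathrm{loc}}(U)$ compactness; you cannot conclude $\|\tilde v\|_{L^2(U)}=1$ from this, nor can you deduce that the trace of $\tilde v$ on $\gamma$ vanishes (you have no boundary condition on $\gamma$, and $H^1_{\mathrm{loc}}(U)$ does not yield a trace there). The obstacle you flag is at the corners, but the real difficulty is already present on the smooth part of $\gamma$: there is nothing preventing the $L^2(U)$-mass of $\tilde v_n$ from concentrating in a shrinking strip near $\gamma$ while $\|\tilde v_n\|_{L^2(\gamma)}\to 0$. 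You do have the extra information that $v_n$ is actually defined on a domain strictly larger than $\bar U$ (since $R_{a_n}$ expands as $a_n\to 0$), so $\gamma$ is genuinely interior; this gives $H^1$-control up to $\gamma$, but only in terms of $\|v_n\|_{L^2(U')}$ on the larger domain $U'$, and normalizing by that quantity pushes the same difficulty out to $\partial U'$.

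The clean fix is essentially the paper's: write $v = w + (v-w)$ with $w$ harmonic in $U$ and $w=v$ on $\partial U$. Then $|\nabla w(0)|\leq C\|v\|_{L^2(\gamma)}$ by the fixed Poisson kernel, and $v-w\in H^1_0(U)$ solves $-\Delta(v-w)=c_a v$, so $\|v-w\|_{H^1_0}\leq C\|c_a\|_\infty\|v\|_{L^2(U)}$; absorbing (since $\|c_a\|_\infty\to 0$) yields $\|v\|_{L^2(U)}\leq C\|v\|_{L^2(\gamma)}$ and then $|\nabla(v-w)(0)|\leq C a_1^2$ by interior regularity. This avoids the loss-of-mass issue entirely.
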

\begin{proof}
The quantity $M_a$ is expressed in terms of $v$ in Lemma \ref{lemma:M_a_rewritten}. We also rewrite
\begin{equation}\label{eq:H(u,2a)}
H(\varphi^a,2a_1,\pi(a))=\int_\gamma v^2|y|\dsigma,
\end{equation}
where, letting $\Omega=\{y: \ a_1y^2+a\in  D_{2a_1}^+(\pi(a))\}$, we have $\gamma = \partial \Omega$.
By Lemma \ref{lemma:gauge_invariance}, $v$ solves $-\Delta v=4a_1^2|y|^2\tilde{p}\lambda^a v$ in $\Omega$, where $\tilde{p}(y) = p(a_1 y^2 + a)$ has the same properties as $p(x)$. Since $\gamma$ does not depend on $a$, Lemma \ref{lemma:appendix_green} applies, providing for $a_1$ sufficiently small the representation formula
\[
v(x)=-\int_\gamma v(y)\, \partial_{\nu}G(x,y) \,\textrm{d}\sigma(y),
\]
for $x\in\Omega$ and moreover, 
\[
\|\partial_{x_i}G(x,\cdot)-\partial_{x_i}\Phi(x,\cdot)\|_{W^{2,q}(\Omega)}\leq Ca_1^2 \quad \text{ for } 1 \leq q < 2.
\]
Therefore we have, by the H\"older and traces inequalities (for the trace embedding, see [\cite{AdamsFournierBook2003},Theorem 5.36]) and the estimate above, taking for example $q = 4/3$, we have
\[
|\partial_{x_i} v(0)|^2 
=\left( \int_\gamma v \partial_{\nu}\partial_{x_i}G(0,y) \,\textrm{d}\sigma(y)\right)^2
\leq \int_\gamma v^2 \dsigma \ \int_\gamma |\partial_\nu\partial_{x_i}G(0,y)|^2 \dy  \leq C \int_\gamma v^2|y|\dsigma.
\]
Hence, by Lemma \ref{lemma:M_a_rewritten} and by \eqref{eq:H(u,2a)}, we have
\[
\frac{|M_a|}{H(\varphi^a,2a_1,\pi(a))}\leq C \frac{|\nabla v(0)|^2}{H(\varphi^a,2a_1,\pi(a))} \leq C. \qedhere
\]
\end{proof}

\begin{lemma}\label{lemma:M_a/H}
There exists $C > 0$ independent of $a_1$ such that
\begin{align*}
\frac{|M_a|}{H(\varphi^a,ka_1,\pi(a))} \leq \frac{C}{k^{2}}
\text{ for every }  k>2.
\end{align*}
\end{lemma}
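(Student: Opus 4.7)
The plan is to combine the two preceding lemmas in a straightforward factorisation. I would write
\begin{equation*}
\frac{|M_a|}{H(\varphi^a, k a_1, \pi(a))}
= \frac{|M_a|}{H(\varphi^a, 2 a_1, \pi(a))}\cdot \frac{H(\varphi^a, 2 a_1, \pi(a))}{H(\varphi^a, k a_1, \pi(a))}.
\end{equation*}
The first factor is already controlled by Lemma \ref{lemma:M_a_bound}, which gives a constant $C_1>0$ (independent of $a_1$) with $|M_a|/H(\varphi^a, 2a_1, \pi(a)) \leq C_1$.

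For the second factor, the key tool is the doubling-type estimate in Lemma \ref{lemma:inferiorboundH}. Applied with $r_1 = 2 a_1$ and $r_2 = k a_1$ (which satisfy $a_1 < r_1 < r_2 < r_0$ as soon as $k>2$ and $k a_1 < r_0$, which holds for $|a|$ small enough since $a\to 0$), it yields
\begin{equation*}
\frac{H(\varphi^a, k a_1, \pi(a))}{H(\varphi^a, 2 a_1, \pi(a))}\;\geq\; e^{-C r_0^{2}}\left(\frac{k}{2}\right)^{2},
\end{equation*}
so that $H(\varphi^a, 2 a_1, \pi(a))/H(\varphi^a, k a_1, \pi(a)) \leq 4 e^{C r_0^2} k^{-2}$.

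Multiplying the two estimates gives the desired bound with $C = 4 C_1 e^{C r_0^{2}}$, which depends only on $r_0$, $\|p\|_{L^\infty}$ and the limit $\lambda$ of $\lambda^a$, but not on $a_1$ or $k$. There is no real obstacle here: the argument is purely a rescaling of Lemma \ref{lemma:M_a_bound} by the quadratic growth provided by Lemma \ref{lemma:inferiorboundH}. The only point requiring a brief comment is the admissibility of the radii $2a_1$ and $ka_1$ in Lemma \ref{lemma:inferiorboundH}, which is automatic for $k>2$ and $|a|$ small enough to guarantee $k a_1 < r_0$; this is the regime in which the lemma will subsequently be invoked.
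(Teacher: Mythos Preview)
Your argument is correct and is exactly the paper's proof: the same factorisation through $H(\varphi^a,2a_1,\pi(a))$, bounding the first factor by Lemma~\ref{lemma:M_a_bound} and the second by Lemma~\ref{lemma:inferiorboundH}. Your added remark on the admissibility of the radii $2a_1<ka_1<r_0$ is a fair clarification that the paper leaves implicit.
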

\begin{proof}
It is a straightforward consequence of Lemmas \ref{lemma:inferiorboundH} and \ref{lemma:M_a_bound}:
\[
\frac{|M_a|}{H(\varphi^a,ka_1,\pi(a))} =  \frac{|M_a|}{H(\varphi^a,2a_1,\pi(a))}\cdot \frac{H(\varphi^a,2a_1,\pi(a))}{H(\varphi^a,ka_1,\pi(a))} \leq \frac{C}{k^{2}}. \qedhere
\] 
\end{proof}

\subsection{Estimates on $N(\varphi^a,r,\pi(a),\lambda^a,A_a)$}
The function $N$ may not be increasing, because of the remainder $M_a$ which appears in the derivative of $E$ in \eqref{eq:derivative_E}. Nonetheless, we can use the estimates proved in the previous paragraph to obtain a bound from below on the derivative of $N$.

\begin{lemma} \label{lemma:bound1}
Let $\varphi^a$ be a solution of \eqref{eq:probleminhalfball} and $r_0$ be as in \eqref{eq:r_0}.
For $a_1 < r < r_{0}$ we have
\begin{align} \label{eq:bound1}
\frac{1}{r^{2}} \int_{D_{r}^{+}(\pi(a))} |\varphi^a|^{2} \, \mathrm{d}x \leq  
\frac{E(\varphi^a,r,\pi(a),\lambda^a,A_a)+H(\varphi^a,r,\pi(a))}{1-C r_0^2}.
\end{align}
If $|a|$ is sufficiently small, we can choose $C=2\lambda\|p\|_{L^\infty}$, where $\lambda$ is the limit of $\lambda^a$ as $a\to 0$. 
\end{lemma}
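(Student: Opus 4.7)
The plan is to reduce the inequality to a direct application of the Poincaré inequality \eqref{eq:Poincaré} (Lemma \ref{lemma:Poincaré}) with an absorption argument. Since the definition of $E$ contains the $L^2$ mass weighted by $\lambda^a p(x)$, the key observation is that $\int_{D_r^+(\pi(a))} |(i\nabla+A_a)\varphi^a|^2\dx$ can be rewritten as $E(\varphi^a,r,\pi(a),\lambda^a,A_a)$ plus a term of the form $\lambda^a \int p|\varphi^a|^2\dx$, and the latter can be absorbed into the left-hand side of the Poincaré inequality provided $r$ is small enough.

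Concretely, I would proceed as follows. First apply Lemma \ref{lemma:Poincaré} to $\varphi^a$ in $D_r^+(\pi(a))$, noting that $\varphi^a = 0$ on $\{x_1=0\}$ by \eqref{eq:probleminhalfball}, to obtain
\[
\frac{1}{r^2}\int_{D_r^+(\pi(a))} |\varphi^a|^2\dx \leq H(\varphi^a,r,\pi(a)) + \int_{D_r^+(\pi(a))} |(i\nabla+A_a)\varphi^a|^2\dx.
\]
Next, using the definition \eqref{eq:E} of $E$ together with the bound $p(x)\leq \|p\|_{L^\infty}$, rewrite the gradient integral as
\[
\int_{D_r^+(\pi(a))} |(i\nabla+A_a)\varphi^a|^2\dx = E(\varphi^a,r,\pi(a),\lambda^a,A_a) + \lambda^a \int_{D_r^+(\pi(a))} p(x) |\varphi^a|^2\dx,
\]
and bound the last term by $\lambda^a\|p\|_{L^\infty}\int_{D_r^+(\pi(a))}|\varphi^a|^2\dx$.

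Substituting and rearranging gives
\[
\left(\frac{1}{r^2} - \lambda^a\|p\|_{L^\infty}\right)\int_{D_r^+(\pi(a))}|\varphi^a|^2\dx \leq E(\varphi^a,r,\pi(a),\lambda^a,A_a) + H(\varphi^a,r,\pi(a)).
\]
By hypothesis $r<r_0$, and \eqref{eq:r_0} ensures $\lambda^a\|p\|_{L^\infty} r_0^2 < 1/2$, so the factor $1-\lambda^a\|p\|_{L^\infty} r^2$ is strictly positive and dominates $1-\lambda^a\|p\|_{L^\infty} r_0^2$. Dividing yields
\[
\frac{1}{r^2}\int_{D_r^+(\pi(a))}|\varphi^a|^2\dx \leq \frac{E(\varphi^a,r,\pi(a),\lambda^a,A_a)+H(\varphi^a,r,\pi(a))}{1-\lambda^a\|p\|_{L^\infty} r_0^2}.
\]
Finally, since $\lambda^a\to\lambda$ as $a\to 0$ by Theorem \ref{theorem:continuity}, for $|a|$ sufficiently small we have $\lambda^a\leq 2\lambda$, and we may take $C=2\lambda\|p\|_{L^\infty}$ as claimed.

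There is no real obstacle here: the argument is purely algebraic once Lemma \ref{lemma:Poincaré} is in place. The only point requiring care is checking that the absorbed coefficient $1-\lambda^a\|p\|_{L^\infty} r_0^2$ stays uniformly bounded away from zero as $a\to 0$, which is exactly what the choice of $r_0$ in \eqref{eq:r_0} guarantees.
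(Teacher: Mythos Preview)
Your proof is correct and follows essentially the same approach as the paper: apply the Poincar\'e inequality \eqref{eq:Poincaré}, use the definition of $E$ to rewrite the magnetic gradient term, and absorb the remaining $\lambda^a\|p\|_{L^\infty}\int|\varphi^a|^2$ contribution into the left-hand side using $r<r_0$. The paper's write-up is only cosmetically different, subtracting $\lambda^a\int p|\varphi^a|^2$ from both sides of \eqref{eq:Poincaré} at the outset rather than first isolating the gradient integral.
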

\begin{proof}
On the one hand, the Poincaré inequality \eqref{eq:Poincaré} provides
\begin{align*}
\frac{1}{r^{2}} \int_{D_{r}^{+}(\pi(a))} |\varphi^a|^{2} \, \mathrm{d}x - \lambda^a \int_{D_{r}^{+}(\pi(a))} p(x) |\varphi^a|^{2} \, \mathrm{d}x \leq E(\varphi^a,r,\pi(a),\lambda^a,A_a) + H(\varphi^a,r,\pi(a)).
\end{align*}
On the other hand, if we take $r < r_{0} $, we obtain that
\begin{align*}
\frac{1-r_0^2\lambda^a\|p\|_{L^\infty}}{r^{2}} \int_{D_{r}^{+}(\pi(a))} |\varphi^a|^{2} \, \mathrm{d}x \leq \frac{1}{r^{2}} \int_{D_{r}^{+}(\pi(a))} |\varphi^a|^{2} \, \mathrm{d}x - \lambda^a \int_{D_{r}^{+}(\pi(a))} p(x) |\varphi^a|^{2} \, \mathrm{d}x.
\end{align*}
The result follows by combining the previous two inequalities.
\end{proof}

\begin{lemma} \label{lemma:boundN1}
Let $\varphi^a$ be a solution of \eqref{eq:probleminhalfball} and $r_0$ be as in \eqref{eq:r_0}. For every $k>1$, $a_1<r_0/k$ and $ka_1<r<r_0$ we have
\begin{align}  \label{eq:boundN1}
N(\varphi^a,r,\pi(a),\lambda^a,A_a) \leq [N(\varphi^a,r_0,\pi(a),\lambda^a,A_a)+1] \exp\left(\frac{Ce^{Cr_0^2}}{k^2}+\frac{C r_0^2}{1-C r_0^2}\right)-1,
\end{align}
with $C>0$ independent from $a_1,k,r,r_0$.
\end{lemma}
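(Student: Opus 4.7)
The plan is to establish an Almgren-type differential inequality for $\log(N(r)+1)$ and integrate it on $[r,r_0]$; the need to work with $\log(N+1)$ rather than with $N$ itself comes from the remainder $M_a$ in Lemma~\ref{lemma:derivativeofE}, which prevents $N$ from being monotone. I would write $N'(r)=(E'H-EH')/H^2$ and substitute the formulas from Lemmas~\ref{lemma:derivativeofH} and~\ref{lemma:derivativeofE}. The key algebraic step is Cauchy--Schwarz applied to the real quantity $E(r)=-i\int_{\partial D_r^+(\pi(a))}(i\nabla+A_a)\varphi^a\cdot\nu\,\overline{\varphi^a}\,d\sigma$, which yields $E(r)^2\leq rH(r)\int_{\partial D_r^+(\pi(a))}|(i\nabla+A_a)\varphi^a\cdot\nu|^2\,d\sigma$. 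Combined with $H'=2E/r$, this means $EH'=2E^2/r\leq 2H\int|(i\nabla+A_a)\varphi^a\cdot\nu|^2\,d\sigma$, so the positive boundary term in $E'H$ is dominated by $EH'$ and disappears from the lower bound. What survives is
\[
N'(r)\geq -\frac{\lambda^a}{rH(r)}\int_{D_r^+(\pi(a))}|\varphi^a|^2\bigl(2p+\nabla p\cdot(x-\pi(a))\bigr)\,dx-\frac{2M_a}{rH(r)}.
\]

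Next I would pass to $\log(N+1)$. Since $N+1=(E+H)/H$ and $E\geq 0$ by Remark~\ref{rem:Egeq0}, one has $H/(E+H)\leq 1$. The weight factor $|2p+\nabla p\cdot(x-\pi(a))|$ is uniformly bounded for $r\leq r_0$, and Lemma~\ref{lemma:bound1} turns $\int_{D_r^+(\pi(a))}|\varphi^a|^2\,dx$ into $r^2(E+H)/(1-Cr_0^2)$; together these control the weight contribution by $C\lambda^a r/(1-Cr_0^2)$. Replacing $M_a$ by $|M_a|$ (its sign is not controlled for finite $a_1$), I arrive at
\[
\frac{d}{dr}\log\bigl(N(r)+1\bigr)\geq -\frac{C\lambda^a r}{1-Cr_0^2}-\frac{2|M_a|}{rH(r)}.
\]

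Finally, integrating on $[r,r_0]$ the weight piece produces at most $Cr_0^2/(1-Cr_0^2)$. For the $M_a$ piece, I would apply Lemma~\ref{lemma:M_a/H} with $k'=\rho/a_1\geq k$ at each $\rho\in[r,r_0]$, yielding $|M_a|/H(\rho)\leq Ce^{Cr_0^2}a_1^2/\rho^2$, with the factor $e^{Cr_0^2}$ carried along by Lemma~\ref{lemma:inferiorboundH} inside the proof of Lemma~\ref{lemma:M_a/H}. Thus
\[
\int_r^{r_0}\frac{2|M_a|}{\rho H(\rho)}\,d\rho\leq \int_r^{r_0}\frac{2Ce^{Cr_0^2}a_1^2}{\rho^3}\,d\rho\leq \frac{Ce^{Cr_0^2}a_1^2}{r^2}\leq \frac{Ce^{Cr_0^2}}{k^2},
\]
using $r\geq ka_1$; exponentiating the resulting lower bound on $\log\bigl((N(r_0)+1)/(N(r)+1)\bigr)$ yields \eqref{eq:boundN1}. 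The single delicate step is the Cauchy--Schwarz cancellation: without it the positive boundary term $2\int|(i\nabla+A_a)\varphi^a\cdot\nu|^2\,d\sigma$ would survive in the lower bound on $N'$, and there is no direct way to estimate it by $H$.
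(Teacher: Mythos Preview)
Your proposal is correct and follows essentially the same argument as the paper: compute $N'$ via Lemmas~\ref{lemma:derivativeofH} and~\ref{lemma:derivativeofE}, drop the nonnegative Cauchy--Schwarz combination, pass to $\log(N+1)$ using $E\geq 0$, control the weight term by Lemma~\ref{lemma:bound1}, control the $|M_a|$ term by Lemmas~\ref{lemma:inferiorboundH} and~\ref{lemma:M_a/H}, and integrate on $[r,r_0]$. One wording slip: it is $EH'$ that is dominated by the positive boundary term in $E'H$ (not the reverse), which is exactly what makes their difference nonnegative and lets it drop from the lower bound---your displayed inequalities are correct as written.
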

\begin{proof}
Let for the moment $N=N(\varphi^a,r,\pi(a),\lambda^a,A_a)$ and analogously for $H$ and $E$.
We use Lemmas \ref{lemma:derivativeofH} and \ref{lemma:derivativeofE} to obtain, for $r>a_1$,
\begin{align}\label{eq:N_derivative}
\frac{\mathrm{d} N}{\mathrm{d}r} = \frac{1}{H^{2}} \left\{ \frac{2}{r} \int_{\partial D_{r}^{+}(\pi(a))} |(i \nabla + A_a) \varphi^a \cdot \nu|^2 \, \mathrm{d}\sigma \int_{\partial D_{r}^{+}(\pi(a))} |\varphi^a|^{2} \, \mathrm{d}\sigma \right. \notag\\
\left. - \frac{2}{r} \left( i \int_{\partial D_{r}^{+}(\pi(a))} (i \nabla + A_a) \varphi^a \cdot \nu \, \overline{\varphi^a} \, \mathrm{d}\sigma \right)^{2}\right\} \notag \\
              - \frac{1}{r^{2}H^{2}} \left\{2M_a + \lambda^a \int_{D_{r}^{+}(\pi(a))} |\varphi^a|^{2}(2p+\nabla p\cdot (x-\pi(a))) \, \mathrm{d}x \right\} \int_{\partial D_{r}^{+}(\pi(a))} |\varphi^a|^{2} \, \mathrm{d} \sigma  \notag \\
              \geq - \frac{1}{rH} \left\{2 |M_a| + \lambda^a \|2p+\nabla p\cdot (x-\pi(a))\|_{L^\infty} \int_{D_{r}^{+}(\pi(a))} |\varphi^a|^{2} \, \mathrm{d}x \right\}.
\end{align}
In the last step we used the Schwarz inequality and the regularity assumption on $p$ \eqref{eq:weight_assumptions}. Therefore we have
\begin{equation}\label{eq:logN_derivative}
\frac{\mathrm{d}}{\mathrm{d}r}\log(N+1)\geq 
- \frac{1}{r(E+H)} \left\{2 |M_a| + \lambda^a \|2p+\nabla p\cdot (x- \pi(a))\|_{L^\infty} \int_{D_{r}^{+}(\pi(a))} |\varphi^a|^{2} \, \mathrm{d}x \right\}.
\end{equation}
We look at the first term in the right hand side of \eqref{eq:logN_derivative}. By Lemma \ref{lemma:inferiorboundH} we have
\[
\frac{H(\varphi^a,r,\pi(a))}{H(\varphi^a,ka_1, \pi(a))}\geq e^{-C r_0^2}\ \frac{r^2}{(ka_1)^2} \, ,
\qquad ka_1<r<r_0.
\]
This together with Remark \ref{rem:Egeq0} and Lemma \ref{lemma:M_a/H} provides
\begin{align*}
- \frac{|M_a|}{r(E+H)} \geq - \frac{|M_a|}{rH} \geq -\frac{e^{C r_0^2}}{r^3}\ \frac{|M_a|(ka_1)^2}{H(\varphi^a,ka_1,\pi(a))}
\geq - C e^{Cr_0^2}\frac{a_1^2}{r^3}, \qquad ka_1<r<r_0.
\end{align*}
Concerning the second term in the right hand side of \eqref{eq:logN_derivative}, we apply Lemma \ref{lemma:bound1} to obtain
\begin{align*}
- \frac{1}{r(E+H)} \int_{D_{r}^{+}(\pi(a))} |\varphi^a|^{2} \dx
\geq - \frac{r}{1-C r_0^2}, \qquad ka_1<r<r_0.
\end{align*}
Thus we have obtained
\[
\frac{\mathrm{d}}{\mathrm{d}r}\log(N+1)\geq -\frac{Ce^{Cr_0^2}a_1^2}{r^3}-\frac{C r}{1-C r_0^2},
\qquad ka_1<r<r_0.
\]
By integrating between $r$ and $r_0$ we arrive at
\begin{align*}
\log \frac{N(\varphi^a,r_0,\pi(a),\lambda^a,A_a)+1}{N(\varphi^a,r,\pi(a),\lambda^a,A_a)+1} 
& \geq C a_1^{2} \left(\frac{e^{Cr_0^2}}{r_{0}^{2}} - \frac{e^{Cr_0^2}}{r^{2}} \right) - \frac{C}{1- C r_0^2} (r_{0}^2 - r^2) \\
& \geq - Ce^{Cr_0^2}\frac{a_1^{2}}{r^2} - \frac{C r_0^2}{1-C r_0^2} 
\geq - \frac{C e^{Cr_0^2}}{k^2} - \frac{C r_0^2}{1-C r_0^2},
\end{align*}
for  $ka_1<r<r_0$. The statement follows by exponentiating both terms.
\end{proof}

\section{Proof of Proposition \ref{prop:unique_limit_profile}}\label{sec:prop_limit_profile}

\begin{proof}[Proof of Proposition \ref{prop:unique_limit_profile}]

{\bf Step 1.} Suppose by contradiction that there are two solutions $\psi$ and $\tilde{\psi}$ to \eqref{eq:limit_profile}, which do not differ by a multiplicative constant. 
By Proposition \ref{proposition:asymptotic_expansion_eigenfunction} we have
\[
\psi(|x-e|,\theta_e)=e^{i\frac{\theta_e}{2}} \sqrt{|x-e|} \left( c_1 \cos\frac{\theta_e}{2}+d_1\sin\frac{\theta_e}{2} \right) +o(\sqrt{|x-e|}),
\]
\[
\tilde{\psi}(|x-e|,\theta_e)=e^{i\frac{\theta_e}{2}} \sqrt{|x-e|} \left( \tilde{c}_1 \cos\frac{\theta_e}{2}+\tilde{d}_1\sin\frac{\theta_e}{2} \right) +o(\sqrt{|x-e|}),
\]
as $|x-e|\to0$, $e=(1,0)$. Suppose first that $c_1^2+d_1^2\neq0$. We consider the linear combination $t\psi+\tilde{\psi}$ that we can write, thanks to Proposition \ref{proposition:asymptotic_expansion_eigenfunction} and the expressions above,
\begin{align}\label{eq:t_psi_asymptotic}
(t \psi + \tilde{\psi}) (|x-e|,\theta_e) = e^{i \frac{\theta_e}{2}} \sqrt{|x-e|} \left[ (t c_1 + \tilde{c}_1) \cos \frac{\theta_e}{2} + (t d_1 + \tilde{d}_1) \sin \frac{\theta_e}{2} \right] + o(\sqrt{|x-e|}). 
\end{align}
The parameter $t$ is chosen in such a way that
\[
M
= \pi \frac{(tc_1+\tilde{c}_1)^2-(td_1+\tilde{d}_1)^2}{4}=0,
\]
where $M$ is the constant associated to $t \psi + \tilde{\psi}$, see \eqref{eq:M_a_rewritten}. More explicitly we have $t=(\tilde{d}_1-\tilde{c}_1)/(c_1-d_1)$ if $c_1\neq d_1$ and $t=-(\tilde{c}_1+\tilde{d}_1)/(c_1+d_1)$ if $c_1 \neq -d_1$.
Exactly as in \eqref{eq:N_derivative} we have for $r > 1$
\begin{equation}\label{eq:N_increasing}
\frac{\mathrm{d}}{\mathrm{d}r} N(t\psi+\tilde{\psi},r,0,0,A_e) 
\geq -\frac{2M}{r H(t\psi+\tilde{\psi},r,0)} =0,
\end{equation}
thanks to our choice of $t$, so that $N(t\psi+\tilde{\psi},\cdot,0 ,0,A_e)$ is increasing. 

We claim that
\begin{equation}\label{eq:linear_combination_psi_N}
\lim_{r\to\infty} N(t\psi+\tilde{\psi},r,0,0,A_e) \leq 1.
\end{equation}
Suppose by contradiction that there exist $\delta,R_\delta>0$ such that $N(t\psi+\tilde{\psi},r,0,0,A_e) \geq 1+\delta$ for every $r>R_\delta$. Then,  since $t\psi+\tilde\psi$ solves the equation, proceeding as in \eqref{eq:derivative_H}, we find
\begin{equation}\label{eq:tilde_psi_aux}
\frac{\mathrm{d}}{\mathrm{d}r}\log H(t\psi+\tilde{\psi},r,0)
=\frac{2}{r} N(t\psi+\tilde{\psi},r,0,0,A_e) \geq \frac{2}{r}(1+\delta),
\qquad r>R_\delta.
\end{equation}
Integrating between $R_\delta$ and $r$ we obtain
\begin{equation}\label{eq:linear_combination_psi_H}
H(t\psi+\tilde{\psi},r,0) \geq Cr^{2(1+\delta)}, \qquad r>R_\delta.
\end{equation}
On the other hand, by assumption \eqref{eq:limit_profile_normalization} we have (by eventually taking a larger $R_\delta$)
\[
N(\psi,r,0,0,A_e)<1+\frac{\delta}{2},\quad N(\tilde{\psi},r,0,0,A_e)<1+\frac{\delta}{2} \qquad r>R_\delta.
\]
Proceeding as above, this implies $H(\psi,r,0)+H(\tilde{\psi},r,0) \leq Cr^{2(1+\delta/2)}$ for $r>R_\delta$. Hence, by the Young inequality, we obtain
\[
H(t\psi+\tilde{\psi},r,0) \leq 2\left[ H(t\psi,r,0)+H(\tilde{\psi},r,0) \right]
\leq C r^{2(1+\delta/2)}, \qquad r>R_\delta,
\]
which contradicts \eqref{eq:linear_combination_psi_H}. Hence \eqref{eq:linear_combination_psi_N} is proved.

On the other hand, it is not difficult to see that, since $t\psi+\tilde{\psi}$ vanishes on $\{x_1=0\}$ but is not identically zero, we must have 
\begin{equation}\label{eq:Ngeq1}
N(t\psi+\tilde{\psi},0,0,0,A_e)\geq1.
\end{equation} 
Indeed, suppose by contradiction that there exist $\eps>0$, $r_\eps<1$ such that
\[
N(t\psi+\tilde{\psi},r,0,0,A_e)<1-\eps, \quad r<r_\eps.
\]
Using this inequality as we did in \eqref{eq:tilde_psi_aux}, and then integrating between $r$ and $r_\eps$, we arrive at
\[
\frac{H(t\psi+\tilde\psi,r_\eps,0)}{H(t\psi+\tilde\psi,r,0)} \leq \left(\frac{r_\eps}{r}\right)^{2-2\eps}.
\]
Conversely, Lemmas \ref{lemma:derivativeofH} and \ref{lemma:inequality2} provide
\[
\frac{\mathrm{d}}{\mathrm{d}r} H(t\psi+\tilde\psi,r,0)
=\frac{2}{r} \int_{D_r^+(0)} |(i\nabla+A_e)(t\psi+\tilde\psi)|^2 \,dx 
\geq \frac{2}{r} H(t\psi+\tilde\psi,r,0),
\]
and hence
\[
\frac{H(t\psi+\tilde\psi,r_\eps,0)}{H(t\psi+\tilde\psi,r,0)} \geq \left(\frac{r_\eps}{r}\right)^2,
\]
which contradicts the previous inequality for $r<r_\eps$.
 
We conclude from \eqref{eq:N_increasing}, \eqref{eq:linear_combination_psi_N} and \eqref{eq:Ngeq1} that $N(t\psi+\tilde{\psi},r,0,0,A_e) \equiv 1$ and, in turn, that $t\psi+\tilde{\psi}=e^{i\theta_e/2} r g(\theta)$, for some function $g$ depending only on the angle. This contradicts the asymptotic behavior \eqref{eq:t_psi_asymptotic} of $t\psi+\tilde{\psi}$ at $e$. We have obtained uniqueness up to a multiplicative constant in case $c_1^2+d_1^2\neq0$. If $c_1=d_1=0$ then all the previous considerations apply with $\tilde{\psi}$ in place of $t\psi+\tilde{\psi}$ and we still obtain a contradiction.

{\bf Step 2.} We will use some ideas in \cite{FelliTerracini2013}, in particular Lemmas 2.4 and 2.9 (see also \cite{AbatangeloFelliTerracini2013}). Let $Q_1=\{(x_1,x_2)\in\R^2: \, x_1>0,\, x_2>0\}$ and let $\Gamma_1=\{ (x_1,0)\in\R^2: \, 0<x_1<1 \}$. We consider the following minimization problem
\begin{equation}\label{eq:beta_def}
\frac{\beta}{2}=\inf\left\{ \int_{Q_1}|\nabla u|^2\dx: \, u\in \mathcal{D}^{1,2}(Q_1), \, u=0 \text{ on } \{x_1=0\}, \, u=-x_1 \text{ on } \Gamma_1 \right\},
\end{equation}
where we denote by $\mathcal{D}^{1,2}(Q_1)$ the closure of $C_0^\infty(Q_1)$ with respect to $\|\nabla u\|_{L^2(Q_1)}$. By standard variational methods the infimum is achieved by a unique function $w\in \mathcal{D}^{1,2}(Q_1)$ (see for example \cite[Theorem 8.4]{SalsaBook}). Due to the symmetries of the problem, we can extend $w$ to $\R^2_+$ in such a way that $w(x_1,-x_2)=w(x_1,x_2)$ and moreover $w$ satisfies the following properties
\[
- \Delta w = 0 \text{ in } \R^2_+\setminus \Gamma_1, \quad
w = 0 \text{ on } \{x_1=0\}, \quad
w = -x_1 \text{ on } \Gamma_1, \quad
\int_{\R^2_+}|\nabla w|^2\dx=\beta <\infty.
\]
By the maximum principle we can suppose $w<0$ in $\R^2_+$. One can check that $\tilde{\psi}=e^{i\theta_e/2}(x_1+w)$ solves \eqref{eq:limit_profile}, by passing to the double covering as in Lemma \ref{lemma:gauge_invariance}. We aim at showing that $\psi=\tilde{\psi}$; thanks to step 1, it will be sufficient to prove that $\tilde{\psi}$ satisfies \eqref{eq:limit_profile_normalization}. Let $\tilde{w}$ be the Kelvin transform of $w$, that is $\tilde{w}(y)=w(y/|y|^2)$ for $|y|<1$. Because $w$ is harmonic outside of $D_1^+(0)$, $\tilde{w}$ is harmonic in $D_1^+(0)$ with zero boundary conditions on $\{y_1=0\}$. Moreover, $\int_{D_1^+(0)} |\nabla \tilde{w}|^2\dx= \int_{\mathbb{R}^2_+ \backslash D_1^+(0)} |\nabla w|^2 \dx < \beta$, then $\tilde{w}$ has finite energy. Therefore $\tilde{w}$ is analytic in $D_1^+(0)$ and admits the following expansion in $D_{1}^+(0)$
\[
\tilde{w}(y)=\sum_{n=1}^\infty Re(\tilde{b}_n y^n), \ \tilde{b}_n \in \C, 
\quad \text{ so that } \quad
w(x)=\sum_{n=1}^\infty Re\left(\tilde{b}_n \frac{x^n}{|x|^{2n}}\right) \quad \text{ for } |x| > 1.
\]
By passing to polar coordinates and taking into account the symmetries of $w$, we find
\begin{equation}\label{eq:w_expansion}
w(r,\theta)=\sum_{n\text{ odd}} \frac{b_n}{r^n} \cos(n\theta), 
\quad r > 1, \quad b_n\in\R, \text{ with } b_1<0.
\end{equation}
Hence $w(r,\theta)=b_1\cos\theta/r+O(r^{-3})$ as $r\to\infty$, and an explicit calculation provides
\[
\lim_{r\to\infty} N(\tilde{\psi},r,0,0,A_e)=1 \quad \text{ and hence }\quad
\psi=\tilde{\psi}=e^{i\theta_e/2}(x_1+w).
\]
To conclude the proof of point (ii) it remains to show that $b_1=-\beta/\pi$. By testing the equation $- \Delta w = 0$ in $\R^2_+\setminus \Gamma_1$ by $w$ we deduce
\begin{equation}\label{eq:beta_boundary_integral1}
\beta=\int_{\R^2_+}|\nabla w|^2\dx = -2\int_{\Gamma_1} x_1\nabla w\cdot\nu \dsigma.
\end{equation}
On the other hand, by testing the equation for $w$ by $x_1$ in $D_R^+(0)$, $R>1$, and the equation $\Delta x_1=0$ by $w$ in $D_R^+(0)$ and subtracting we obtain
\begin{equation}\label{eq:beta_boundary_integral2}
\int_{\partial D_R^+(0)} (w\nabla x_1-x_1\nabla w)\cdot\nu \dsigma 
-2\int_{\Gamma_1} x_1\nabla w\cdot\nu \dsigma =0.
\end{equation}
By combining \eqref{eq:w_expansion}-\eqref{eq:beta_boundary_integral2} we obtain
\[
\begin{split}
\beta & =\lim_{R\to\infty} \int_{\partial D_R^+(0)} (x_1\nabla w-w\nabla x_1) \cdot\nu \dsigma \\
&=\lim_{R\to\infty} \left\{ -\sum_{n\text{ odd}} \frac{(n+1)b_n}{R^{n-1}} \int_{-\pi/2}^{\pi/2} \cos(n\theta)\cos\theta \,\mathrm{d}\theta \right\} \\
&=-2 b_1 \int_{-\pi/2}^{\pi/2} \cos^2\theta \,\mathrm{d}\theta
= -\pi b_1
\end{split}
\]
which concludes the proof.
\end{proof}

We can interpret $\beta$ as the cost, in terms of energy, needed to impose that $w$ vanishes on $\Gamma_1$, or equivalently as the energy cost of the nodal line of $\psi$.

\section{Pole approaching the boundary not on a nodal line of $\varphi_k$}
\label{sec:pole_approaching_not_nodal}

Let $\varphi^a$ be a solution of \eqref{eq:probleminhalfball}. In this section we treat the case when  $a\to0$ and $\varphi$ has a zero of order 1 at 0 (no nodal lines). In this case, if $\pi(a)=(0,a_2)$ is sufficiently close to $0$, then $\varphi$ has a zero of order 1 also at $\pi(a)$: there exists $\bar{a}_2>0$ such that, for $|\pi(a)|<\bar{a}_2$, we have
\begin{equation}\label{eq:phi_no_nodal_lines}
\varphi(|x-\pi(a)|,\theta_{\pi(a)})=|x-\pi(a)| \left(c_1(\pi(a)) \cos\theta_{\pi(a)} +d_1(\pi(a))\sin\theta_{\pi(a)} \right)+O(|x-\pi(a)|^2),
\end{equation}
as $|x-\pi(a)|\to0$, with $c_1(\pi(a))^2+d_1(\pi(a))^2\neq 0$ and $x - \pi(a) = |x-\pi(a)| (\cos \theta_{\pi(a)}, \sin \theta_{\pi(a)})$. In the following, we keep the notation used in Section \ref{sec:freq_formula} and we notice that the results proved therein hold for $\varphi^a$.

\subsection{Estimates on the frequency function} 
\begin{lemma} \label{lemma:boundNr0}
Let $\varphi^a$ be a solution of \eqref{eq:probleminhalfball} and suppose that $\varphi$ has a zero of order 1 at $0$. Let $|\pi(a)|=|a_2|<\bar{a}_2$ so that \eqref{eq:phi_no_nodal_lines} holds.
For every $\varepsilon > 0$, there exists $\tilde{r}_\eps>0$ such that for all $r_\eps\leq \tilde{r}_\eps$ there exists $\bar{a}_{1,\eps,r_\eps} > 0$ such that
\begin{align} \label{eq:boundNr0}
1\leq N(\varphi^a,r_{\eps},\pi(a),\lambda^a,A_a) \leq 1+\frac{\varepsilon}{2} \quad\text{ for all } a_1 < \bar{a}_{1,\eps,r_\eps}.
\end{align}
\end{lemma}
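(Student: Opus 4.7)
The plan is to interpret the frequency as a function of scale and track how it responds to shrinking $r_\eps$ and, independently, $a_1$. First one checks that $N(\varphi, r, \pi(a), \lambda, 0) \to 1$ as $r \to 0$: since $\varphi = 0$ on $\{x_1 = 0\}$ and the boundary is locally flat near $\pi(a)$, the expansion \eqref{eq:phi_no_nodal_lines} forces $d_1(\pi(a)) = 0$, so $\varphi(\rho, \theta) = c_1(\pi(a))\,\rho\cos\theta + O(\rho^2)$. The identity $E(\varphi, r, \pi(a), \lambda, 0) = \int_{\partial D_r^+(\pi(a)) \cap \R^2_+} \varphi\,\partial_\nu \varphi\,d\sigma$, a consequence of $-\Delta\varphi = \lambda p\varphi$ and integration by parts (with no contribution on the flat part where $\varphi$ vanishes), combined with the definition of $H$, shows that both $E$ and $H$ share the leading term $\tfrac{\pi}{2} c_1(\pi(a))^2 r^2$, hence $N(\varphi, r, \pi(a), \lambda, 0) \to 1$. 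Choose $\tilde r_\eps$ so that $|N(\varphi, r_\eps, \pi(a), \lambda, 0) - 1| \le \eps/4$ for every $r_\eps \le \tilde r_\eps$.

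Now fix $r_\eps \le \tilde r_\eps$. As $a_1 \to 0$, the arc $\partial D_{r_\eps}^+(\pi(a)) \cap \R^2_+$ stays at distance at least $r_\eps - a_1$ from the pole $a$. On a fixed neighborhood of this arc one may write $A_a = \tfrac{1}{2}\nabla\theta_a$ for a smooth phase $\theta_a$, so that $u^a := e^{-i\theta_a/2}\varphi^a$ solves the real eigenvalue equation $-\Delta u^a = \lambda^a p\,u^a$. Combining $\lambda^a \to \lambda$ (Theorem \ref{theorem:continuity}) with spectral convergence of the $L^2$-normalized eigenfunctions and standard interior elliptic regularity, after a choice of phase one obtains $u^a \to \varphi$ in $C^1$ on compact subsets of $\overline{\Omega} \setminus \{0\}$ that remain bounded away from $a$. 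Using the boundary representation \eqref{eq:newE} for $E$ together with the definition of $H$, both quantities are expressed as integrals over the arc on which the integrands converge; this yields $N(\varphi^a, r_\eps, \pi(a), \lambda^a, A_a) \to N(\varphi, r_\eps, \pi(a), \lambda, 0)$ as $a_1 \to 0$. Picking $\bar a_{1,\eps,r_\eps}$ so that the approximation is within $\eps/4$ then produces the upper bound $N \le 1 + \eps/2$.

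The one-sided lower bound $N \ge 1$ is the more delicate point. It reflects the fact that the magnetic pole $a$ lies strictly inside $D_{r_\eps}^+(\pi(a))$ and contributes a non-negative correction to the energy: indeed, in the Pohozaev-type identity of Lemma \ref{lemma:pohozaev} the residue $M_a$, computed explicitly in Lemma \ref{lemma:M_a_rewritten} in terms of the local expansion coefficients of $\varphi^a$ at $a$, combines with the monotonicity-type derivative formula from Lemma \ref{lemma:boundN1} and the scale estimate of Lemma \ref{lemma:M_a/H} to show that the sub-leading correction to $N(\varphi^a, r_\eps, \pi(a), \lambda^a, A_a)$ about $1$ has a definite sign, leaving $N \ge 1$ once $a_1$ is sufficiently small relative to $r_\eps$.

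The principal technical obstacle is the strong convergence argument of the second paragraph: one must verify that the gauge-transformed eigenfunction $u^a$ converges in $C^1$ on a fixed neighborhood of the outer arc $\partial D_{r_\eps}^+(\pi(a)) \cap \R^2_+$ uniformly as $a_1 \to 0$. This is feasible precisely because the arc remains uniformly separated from the approaching pole; interior elliptic estimates then upgrade the weak convergence coming from spectral theory to the $C^1$ convergence needed to pass to the limit in the boundary integrals defining $E$ and $H$.
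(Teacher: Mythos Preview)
Your upper bound argument is essentially the paper's: both show $N(\varphi^a,r_\eps,\pi(a),\lambda^a,A_a)\to N(\varphi,r_\eps,\pi(a),\lambda,0)$ via the convergence $e^{-i\theta_a/2}\varphi^a\to\varphi$. The paper cites the $H^1(\Omega)$ convergence from \cite[Remark~4.4]{BonnaillieNorisNysTerracini2013} and works directly with the volume definition \eqref{eq:E} of $E$; you upgrade to $C^1$ near the arc via interior elliptic estimates and use the boundary representation \eqref{eq:newE}. Either route is fine, and the paper also uses the monotonicity of the Laplacian frequency (from \cite{HanLinBook}) to get the uniformity in $r_\eps\le\tilde r_\eps$ that you simply assert.

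Your lower bound argument, however, has a genuine gap. You claim that the residue $M_a$, combined with Lemma~\ref{lemma:boundN1} and Lemma~\ref{lemma:M_a/H}, forces the correction to $N$ about $1$ to have a definite sign. This does not work: by Lemma~\ref{lemma:M_a_rewritten} one has $M_a=\tfrac{a_1\pi}{4}(c_1^2-d_1^2)$, which carries no a priori sign; and Lemma~\ref{lemma:boundN1} yields an \emph{upper} bound on $N$, so the inequality points the wrong way for your purpose. Nothing in the lemmas you cite produces $N\ge 1$.

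The paper's route is entirely different and does not involve $M_a$ or the Pohozaev identity at all: it invokes the argument of \eqref{eq:Ngeq1}. The key input is Lemma~\ref{lemma:inequality2}, which gives $\int_{D_r^+(\pi(a))}|(i\nabla+A_a)\varphi^a|^2\ge H(\varphi^a,r,\pi(a))$; together with Lemma~\ref{lemma:derivativeofH} this yields $\tfrac{d}{dr}\log H\ge \tfrac{2}{r}(1-O(r^2))$ and hence the lower bound of Lemma~\ref{lemma:inferiorboundH}. If one had $N<1-\delta$ on an interval of small radii, integrating $\tfrac{d}{dr}\log H=\tfrac{2N}{r}$ would give $H(r_2)/H(r_1)\le (r_2/r_1)^{2(1-\delta)}$, contradicting Lemma~\ref{lemma:inferiorboundH} as $r_1\to 0$.
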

\begin{proof}
The bound from below can be proved as in \eqref{eq:Ngeq1}. Let us concentrate on the bound from above.
Relation \eqref{eq:phi_no_nodal_lines} implies that $1\leq N(\varphi,r,\pi(a),\lambda,0)\leq 1+O(r)$ as $r\to0$ (see for example \cite[Corollary 2.2.4]{HanLinBook}). Let $\tilde{r}_\eps$ be such that $N(\varphi,\tilde{r}_\eps,\pi(a),\lambda,0)\leq 1+\eps/8$. By the monotonicity property of the Almgren function for the eigenfunctions of the Laplacian (see for example \cite[Corollary 3.1.2]{HanLinBook}), we have that $N(\varphi,r,\pi(a),\lambda,0)\leq 1+\eps/4$ for every $r\leq \tilde{r}_\eps$. Fix $0<r_\eps<\tilde{r}_\eps$.
Since we know from \cite[Remark 4.4]{BonnaillieNorisNysTerracini2013} that
\[
\lambda^a\to\lambda \quad\text{and}\quad \|e^{-i \theta_a /2}\varphi^a- \varphi\|_{H^1(\Omega)} \to 0, \quad\text{as } a \to0,
\]
we deduce $|N(\varphi^a,r_{\eps},\pi(a),\lambda^a,A_a)-N(\varphi,r_\eps,\pi(a),\lambda,0)|\leq \eps/4$ for $|\pi(a)|<\bar{a}_2$ and $a_1<\bar{a}_{1,\eps,r_\eps}$.
\end{proof}

So far we have obtained an estimate on $N$ for a fixed radius $r_\eps$. Since $N$ is not increasing, this is not sufficient to obtain the estimate for $r\to0$. Nonetheless, we can provide a bound on $N$ for $r$ sufficiently far from the singularity. This is done by exploiting the estimates proved in the Section \ref{sec:freq_formula}.

\begin{lemma} \label{lemma:boundforbigradiusN}
Let $\varphi^a$ be a solution of \eqref{eq:probleminhalfball} and suppose that $\varphi$ has a zero of order 1 at $0$. Let $|\pi(a)|<\bar{a}_2$ so that \eqref{eq:phi_no_nodal_lines} holds.
For every $\varepsilon > 0$ there exist $r_\eps, \bar{a}_{1,\eps}>0$ and $k_\eps>1$ such that
\begin{align} \label{eq:boundN2}
N(\varphi^a,r,\pi(a),\lambda^a,A_a) \leq 1+\eps
\end{align}
for every $a_1<\bar{a}_{1,\eps}$ and for every $k_\eps a_1<r<r_\eps$, and
\begin{align} \label{eq:doublingformula}
\frac{H(\varphi^a,r_{2},\pi(a))}{H(\varphi^a,r_{1},\pi(a))} 
\leq \left( \frac{r_{2}}{r_{1}} \right)^{2(1+\eps)}.
\end{align}
for every $a_1<\bar{a}_{1,\eps}$ and $k_\eps a_1<r_1<r_2<r_\eps$.
\end{lemma}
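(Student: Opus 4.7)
The plan is to combine the fixed-radius bound from Lemma~\ref{lemma:boundNr0} with the propagation estimate of Lemma~\ref{lemma:boundN1}, so that an upper bound on $N$ at a single small radius $r_\varepsilon$ is transported down to all radii $r \in (k_\varepsilon a_1, r_\varepsilon)$ at the cost of a multiplicative factor that can be made arbitrarily close to $1$ by taking $r_\varepsilon$ small and $k_\varepsilon$ large. Once \eqref{eq:boundN2} is proved, the doubling formula \eqref{eq:doublingformula} follows by a direct integration of the identity $\frac{d}{dr}\log H = \frac{2}{r}N$ supplied by Lemma~\ref{lemma:derivativeofH}.

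More concretely, given $\varepsilon>0$, I would first invoke Lemma~\ref{lemma:boundNr0} to obtain $\tilde r_\varepsilon>0$ such that for every $r_\varepsilon\leq \tilde r_\varepsilon$ one has $N(\varphi^a,r_\varepsilon,\pi(a),\lambda^a,A_a)\leq 1+\varepsilon/2$ whenever $a_1<\bar a_{1,\varepsilon,r_\varepsilon}$. I would then shrink $r_\varepsilon$ (keeping it below $\tilde r_\varepsilon$) so that $Cr_\varepsilon^2/(1-Cr_\varepsilon^2)$ becomes small, and afterwards pick $k_\varepsilon>1$ so large that
$$
(2+\varepsilon/2)\,\exp\!\left(\frac{Ce^{Cr_\varepsilon^2}}{k_\varepsilon^{\,2}}+\frac{Cr_\varepsilon^2}{1-Cr_\varepsilon^2}\right)-1 \;\leq\; 1+\varepsilon,
$$
which is possible since the left hand side tends to $1+\varepsilon/2<1+\varepsilon$ as $r_\varepsilon\to 0$ and $k_\varepsilon\to\infty$. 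Setting $\bar a_{1,\varepsilon}:=\min(\bar a_{1,\varepsilon,r_\varepsilon},\,r_\varepsilon/k_\varepsilon)$ and applying Lemma~\ref{lemma:boundN1} with $r_0$ replaced by $r_\varepsilon$ and $k$ replaced by $k_\varepsilon$ then delivers \eqref{eq:boundN2} for all $a_1<\bar a_{1,\varepsilon}$ and $k_\varepsilon a_1<r<r_\varepsilon$.

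For the doubling estimate, on the interval $k_\varepsilon a_1<r<r_\varepsilon$ I would combine \eqref{eq:derivative_H} with \eqref{eq:boundN2} to get
$$
\frac{d}{dr}\log H(\varphi^a,r,\pi(a)) \;=\; \frac{2}{r}\,N(\varphi^a,r,\pi(a),\lambda^a,A_a) \;\leq\; \frac{2(1+\varepsilon)}{r},
$$
and then integrate between $r_1$ and $r_2$ and exponentiate to obtain \eqref{eq:doublingformula}.

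I do not expect any serious obstacle: the argument is essentially a bookkeeping combination of two estimates already at our disposal. The only point to verify is the legitimacy of using Lemma~\ref{lemma:boundN1} with the reference radius $r_0$ replaced by our $r_\varepsilon$. This is fine because the proof of that lemma, together with Lemmas~\ref{lemma:inferiorboundH}, \ref{lemma:bound1} and \ref{lemma:M_a/H}, only uses the smallness condition \eqref{eq:r_0}, which $r_\varepsilon$ automatically satisfies once we have taken it sufficiently small (recall that $\lambda^a\to\lambda$).
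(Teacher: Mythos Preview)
Your proposal is correct and follows essentially the same approach as the paper: combine Lemma~\ref{lemma:boundNr0} with Lemma~\ref{lemma:boundN1} (applied with $r_0=r_\varepsilon$) to propagate the bound $N\leq 1+\varepsilon/2$ down to all $r\in(k_\varepsilon a_1,r_\varepsilon)$, choosing $r_\varepsilon$ small and $k_\varepsilon$ large so the multiplicative loss is absorbed, and then integrate $\frac{d}{dr}\log H=\frac{2}{r}N$ to get the doubling estimate. Your handling of the parameters and your remark on the legitimacy of replacing $r_0$ by $r_\varepsilon$ match the paper's argument.
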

\begin{proof}
To prove the first inequality we combine the previous lemma with Lemma \ref{lemma:boundN1}. In Lemma \ref{lemma:boundN1} we choose $r_0=r_\eps<\tilde{r}_\eps$. For every $k>1$, $a_1<\min\{r_\eps/k,\bar{a}_{1,\eps,r_\eps}\}$ and $ka_1<r<r_\eps$ we have
\[
N(\varphi^a,r,\pi(a),\lambda^a,A_a)\leq \left(2+\frac{\eps}{2}\right) \exp\left(\frac{Ce^{Cr_\eps^2}}{k^2}+\frac{2\lambda^ar_\eps^2}{1-\lambda^ar_\eps^2}\right)-1.
\]
We can impose that the right hand side above is less than $1+\eps$ by choosing $r_\eps$ sufficiently small and $k=k_\eps$ sufficiently large. Then we let $\bar{a}_{1,\eps}<\min\{r_\eps/k_\eps,\bar{a}_{1,\eps,r_\eps}\}$.

Let us look at the second inequality. We deduce from Lemma \ref{lemma:derivativeofH} and from \eqref{eq:boundN2} that 
\begin{align*}
\frac{\mathrm{d}}{\mathrm{d}r} \log H(\varphi^a,r,\pi(a)) = \frac{2}{r} N(\varphi^a,r,\pi(a),\lambda^a,A_a) \leq \frac{2 (1+\eps)}{r}
\end{align*}
for $a_1< \bar{a}_{1,\eps}$ and $k_\eps a_1<r<r_\eps$. Integrating between $r_1$ and $r_2$ we obtain the result.
\end{proof}
In Lemma \ref{lemma:inferiorboundH}, we obtained an superior bound on the function $H(\varphi^a,r,\pi(a))$. In the case where $r$ is sufficiently far from the singularity, we can obtain an inferior bound on $H(\varphi^a,r,\pi(a))$, which can be improved with respect to \eqref{eq:doublingformula}. This is the object of the following lemma.
\begin{lemma} \label{lemma:H_ka}
Let $\varphi^a$ be a solution of \eqref{eq:probleminhalfball} and suppose that $\varphi$ has a zero of order 1 at $0$. Let $|\pi(a)|<\bar{a}_2$ so that \eqref{eq:phi_no_nodal_lines} holds. 

For every $K>\sqrt{\beta/\pi}$ ($\beta$ defined in \eqref{eq:beta_def}) there exists $\bar{a}_1>0$ (depending on $K$) and $C>0$ such that
\[
H(\varphi^a,Ka_1,\pi(a)) \geq C (Ka_1)^2 \quad \text{for every } a_1<\bar{a}_1.
\]
\end{lemma}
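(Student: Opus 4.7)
The strategy is to rescale $\varphi^a$ near $\pi(a)$ at the scale $a_1$ and identify the limit with the explicit profile $\psi$ of Proposition \ref{prop:unique_limit_profile}; the threshold $\sqrt{\beta/\pi}$ will appear as the radius at which the leading linear and dipole terms in the expansion of that profile cancel on the half-circle. Since $\varphi$ has a zero of order $1$ at $0$ and vanishes on $\{x_1=0\}$, the expansion \eqref{eq:phi_no_nodal_lines} (with $d_1(0)=0$ forced by the Dirichlet condition) reads $\varphi(x)=c_1 x_1+O(|x|^2)$ with $c_1\neq 0$.

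Set $\tilde\psi^a(y):=\varphi^a(a_1 y_1,\, a_1 y_2+a_2)/(c_1 a_1)$ on $D^+_{r_0/a_1}(0)$. The Aharonov--Bohm scaling $A_a(a_1 y+\pi(a))=a_1^{-1}A_e(y)$ converts \eqref{eq:probleminhalfball} into
\[
(i\nabla+A_e)^2\tilde\psi^a=a_1^{2}\lambda^{a}\,p(a_1 y+\pi(a))\,\tilde\psi^a \quad\text{in } D^+_{r_0/a_1}(0),\qquad \tilde\psi^a=0 \text{ on }\{y_1=0\},
\]
and the change of variables yields the exact identity
\[
H(\varphi^a,Ka_1,\pi(a))=c_1^{2} a_1^{2}\, H(\tilde\psi^a,K,0),
\]
reducing the claim to $\liminf_{a\to 0} H(\tilde\psi^a,K,0)>0$. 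I would then prove an $H^1_{A_e,loc}(\R^2_+)$-bound on $\{\tilde\psi^a\}$: for any fixed $R>1$, the $H^1$-convergence $e^{-i\theta_a/2}\varphi^a\to\varphi$ recalled in Lemma \ref{lemma:boundNr0} together with the Taylor expansion of $\varphi$ at $0$ gives $\|\tilde\psi^a\|_{L^2(\partial D^+_R(0))}\leq C_R$ uniformly in $a$; testing the rescaled equation against $\tilde\psi^a\eta^2$ for a cutoff $\eta\equiv 1$ on $D^+_R(0)$ and absorbing the local contribution near $e$ via the Hardy inequality \eqref{eq:hardy} delivers the required uniform $H^1_{A_e}(D^+_R(0))$ bound.

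Extract a subsequential limit $\tilde\psi^a\to\tilde\psi$ in $H^1_{A_e,loc}(\R^2_+)$; since $a_1^{2}\lambda^{a}\to 0$, $\tilde\psi$ solves \eqref{eq:limit_profile}. The crucial matching step identifies $\tilde\psi$: for $|y|$ large but with $a_1|y|$ still small, the definition of $\tilde\psi^a$ combined with $\varphi(x)\approx c_1 x_1$ forces $\tilde\psi^a(y)\approx e^{i\theta_e/2} y_1$ asymptotically, so that $\tilde\psi$ satisfies the normalization \eqref{eq:limit_profile_normalization} with multiplicative constant $1$. Proposition \ref{prop:unique_limit_profile} then uniquely identifies $\tilde\psi(y)=e^{i\theta_e/2}(y_1+w(y))$, where $w$ is the minimizer of \eqref{eq:beta_def}.

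Using the series for $w$ from point (ii) of Proposition \ref{prop:unique_limit_profile} and the orthogonality of $\{\cos(n\theta)\}_{n\text{ odd}}$ on $[-\pi/2,\pi/2]$, for $K>1$
\[
H(\tilde\psi,K,0)=\int_{-\pi/2}^{\pi/2}\bigl(K\cos\theta+w(K,\theta)\bigr)^2 d\theta \geq \frac{\pi}{2}\Big(K-\frac{\beta}{\pi K}\Big)^2>0 \quad\text{whenever } K>\sqrt{\beta/\pi},
\]
the subcase $\sqrt{\beta/\pi}<K\leq 1$ (if nonempty) being handled by the strict positivity of $y_1+w$ on $\partial D_K^+(0)$ away from the Dirichlet set via the maximum principle. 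Trace convergence on $\partial D^+_K(0)$ yields $H(\tilde\psi^a,K,0)\geq H(\tilde\psi,K,0)/2$ for $a_1$ small, and the displayed identity gives the claim with $C=c_1^{2} H(\tilde\psi,K,0)/(2K^{2})$. The main obstacle is the matching step in the third paragraph: the convergence $\varphi^a\to\varphi$ holds only globally in $H^1(\Omega)$, so extracting the required pointwise/trace information on annuli $\{|y|\sim R\}$ as both $a_1\to 0$ and $R\to\infty$ with $Ra_1\to 0$ calls for a diagonal extraction coupled with elliptic regularity estimates on each annulus; this is closely intertwined with the proof of Theorem \ref{theorem:blow_up_almgren}.
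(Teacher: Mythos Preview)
Your blow-up strategy is natural but contains a genuine gap precisely where you flag it, and the gap is not merely technical. With your normalization $\tilde\psi^a=\varphi^a(a_1\cdot+\pi(a))/(c_1a_1)$, nothing prevents the subsequential limit $\tilde\psi$ from being identically zero. The available a priori information is (i) the convergence $e^{-i\theta_a/2}\varphi^a\to\varphi$ in $C^\infty(\Omega\setminus D^+_r(0))$ for each \emph{fixed} $r>0$, and (ii) the frequency bound of Lemma~\ref{lemma:boundforbigradiusN}, which yields only $H(\varphi^a,Ka_1,\pi(a))\geq C a_1^{2(1+\varepsilon)}$. Neither gives a uniform positive lower bound on $H(\tilde\psi^a,K,0)=H(\varphi^a,Ka_1,\pi(a))/(c_1^2 a_1^2)$; the ``matching'' you invoke---behaviour of $\varphi^a$ on circles of radius $O(a_1)$---lies exactly in the regime where (i) says nothing. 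Showing that $\tilde\psi\not\equiv 0$ with multiplicative constant $1$ is essentially equivalent to the lemma itself, so the argument is circular as written. (Incidentally, had the identification succeeded your computation would give $H(\tilde\psi,K,0)>0$ for \emph{every} $K>0$, since $y_1+w>0$ in $\R^2_+$; the threshold $\sqrt{\beta/\pi}$ is an artifact of the paper's method.)

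The paper circumvents the circularity by a direct test-function argument that never requires control of $\varphi^a$ at scale $a_1$. One tests the equation for $\varphi^a$ against two explicit $A_a$-harmonic functions: the rescaled profile $\Phi(x)=a_1\psi((x-\pi(a))/a_1)$ on $D^+_{Ka_1}(\pi(a))$, and an auxiliary $\Gamma$ on the annulus $D^+_R\setminus D^+_{Ka_1}$ (built from the Fourier tail of $\psi$ so that $\Gamma=\Phi$ on $\partial D^+_{Ka_1}$, with leading coefficient $(K^2-\beta/\pi)a_1^2$), where $R>0$ is small but \emph{fixed}. Adding the two integral identities, the normal derivatives on $\partial D^+_{Ka_1}$ combine so that the left side is bounded above by $Ka_1\sqrt{2\pi H(\varphi^a,Ka_1,\pi(a))}$. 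On the right side the dominant term is the boundary integral on the fixed circle $\partial D^+_R$, where the $C^\infty$ convergence and the expansion \eqref{eq:phi_no_nodal_lines} \emph{are} available, producing $c_1(\pi K^2-\beta)a_1^2+o(a_1^2)$; the hypothesis $K>\sqrt{\beta/\pi}$ is used exactly to make this positive. The annulus and inner-ball contributions are $O(a_1^2 R^2)$ and $O(a_1^4)$ respectively, hence absorbed for $R$ small. This yields $\sqrt{H(\varphi^a,Ka_1,\pi(a))}\geq C\,Ka_1$ without ever passing to a blow-up limit.
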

\begin{proof}
We consider the function $\psi$ which has been introduced in the statement of Proposition \ref{prop:unique_limit_profile} (with abuse of notation we divide by the multiplicative constant $C$ which is not relevant in this context). The rescaled function
\[
\Phi(x)=a_1 \psi\left( \frac{x-\pi(a)}{a_1} \right)
\]
satisfies
\[
\left\{\begin{array}{ll}
(i\nabla+A_a)^2\Phi=0 \quad & \R^2_+ \\
\Phi=0 &\{x_1=0\},
\end{array}\right.
\]
and the following expansion, where $x-\pi(a)=\rho\, (\cos\theta_{\pi(a)}, \sin\theta_{\pi(a)})$:
\begin{equation}\label{eq:Phi_expansion}
\Phi(\rho,\theta_{\pi(a)})=e^{i\frac{\theta_a}{2}} \left( \rho \cos\theta_{\pi(a)}
-\frac{\beta}{\pi} a_1^2\frac{\cos\theta_{\pi(a)}}{\rho} 
+\sum_{\substack{n \geq 3 \\ n \text{ odd}}} b_n a_1^{n+1} \frac{\cos(n\theta_{\pi(a)})}{\rho^n} \right)
\quad \text{for } \rho>a_1.
\end{equation}
By testing the equation satisfied by $\varphi^a$ by $\Phi$ in $D_r^+(\pi(a))$ ($r>a_1$), we obtain:
\begin{equation}\label{eq:phia_tested_Phi}
\lambda^a \int_{D_r^+(\pi(a))} p(x) \varphi^a\overline{\Phi} \dx
=i \int_{\partial D_r^+(\pi(a))} \left\{ (i\nabla+A_a)\varphi^a\cdot\nu \, \overline{\Phi}
+\varphi^a \, \overline{(i\nabla+A_a)\Phi\cdot\nu} \right\}\dsigma.
\end{equation}
Fix $K>\sqrt{\beta/\pi}$. For $\rho>a_1$ we also define
\begin{equation}\label{eq:Gamma_def}
\Gamma(\rho,\theta_{\pi(a)})= e^{i\frac{\theta_a}{2}} \left\{
\left(K^2-\frac{\beta}{\pi}\right) a_1^2\frac{\cos\theta_{\pi(a)}}{\rho} 
+\sum_{\substack{n \geq 3 \\ n \text{ odd}}} b_na_1^{n+1}\frac{\cos(n\theta_{\pi(a)})}{\rho^n}\right\},
\end{equation}
so that
\[
\left\{\begin{array}{ll}
(i\nabla+A_a)^2\Gamma=0 \quad & \R^2_+ \setminus D_{a_1}^+(\pi(a))\\
\Gamma=0 &\{x_1=0\} \\
\Gamma=\Phi & \partial D_{Ka_1}^+(\pi(a)).
\end{array}\right.
\]
By testing the equation satisfied by $\varphi^a$ by $\Gamma$ in an annulus $(D_R^+\setminus D_r^+)(\pi(a))$ ($R>r>a_1$), we obtain:
\begin{equation}\label{eq:phia_tested_Gamma}
\lambda^a \int_{(D_R^+\setminus D_r^+)(\pi(a))} p(x) \varphi^a \overline{\Gamma} \dx 
=i \int_{\partial (D_R^+\setminus D_r^+)(\pi(a))} \left\{  (i\nabla +A_a)\varphi^a\cdot\nu \, \overline{\Gamma}+\varphi^a \, \overline{(i\nabla +A_a)\Gamma \cdot\nu}
\right\} \dsigma.
\end{equation}
In equations \eqref{eq:phia_tested_Phi} and \eqref{eq:phia_tested_Gamma} we choose $r=Ka_1$ and $R>Ka_1$ to be fixed later. Adding the two equations we obtain
\begin{equation}\label{eq:PhiGamma}
\begin{split}
i \int_{\partial D_{Ka_1}^+(\pi(a))} \varphi^a \left\{ \overline{(i\nabla+A_a)\Phi\cdot\nu}
-\overline{(i\nabla +A_a)\Gamma\cdot\nu} \right\} \dsigma
=\lambda^a \int_{D_{Ka_1}^+(\pi(a))}  p(x) \varphi^a\overline{\Phi} \dx \\
+\lambda^a \int_{(D_R^+\setminus D_{Ka_1}^+)(\pi(a))} p(x) \varphi^a\overline{\Gamma} \dx
-i \int_{\partial D_R^+(\pi(a))} \left\{ (i\nabla+A_a)\varphi^a\cdot\nu\overline{\Gamma}
+\varphi^a\overline{(i\nabla+A_a)\Gamma\cdot\nu} \right\} \dsigma.
\end{split}
\end{equation}
Noticing that
\[
(i\nabla+A_a)\Phi\cdot\nu\mid_{\partial D^+_{Ka_1(\pi(a))}} 
=i e^{i\frac{\theta_a}{2}} \Big\{ \Big(1+ \frac{\beta}{\pi K^2}\Big)\cos\theta_{\pi(a)}
-\sum_{\substack{n \geq 3 \\ n \text{ odd}}} \frac{n b_n}{K^{n+1}} \cos(n\theta_{\pi(a)}) \Big\},
\]
\[
(i\nabla+A_a)\Gamma\cdot\nu\mid_{\partial D^+_{Ka_1(\pi(a))}} 
=- i e^{i\frac{\theta_a}{2}} \Big\{ \Big(1- \frac{\beta}{\pi K^2}\Big)\cos\theta_{\pi(a)}
+\sum_{\substack{n \geq 3 \\ n \text{ odd}}} \frac{n b_n}{K^{n+1}} \cos(n\theta_{\pi(a)}) \Big\},
\]
we can estimate the left hand side of \eqref{eq:PhiGamma} from above as follows
\begin{equation}\label{eq:PhiGamma_lhs}
\begin{split}
\left| i \int_{\partial D_{Ka_1}^+(\pi(a))} \varphi^a \left\{ \overline{(i\nabla+A_a)\Phi\cdot\nu}
-\overline{(i\nabla +A_a)\Gamma\cdot\nu} \right\} \dsigma \right| 
=\left| \int_{\partial D^+_{Ka_1}(\pi(a))} \varphi^a e^{-i\frac{\theta_a}{2}} 
2 \cos\theta_{\pi(a)}\dsigma \right| \\
\leq 2\|\varphi^a\|_{L^2(\partial D^+_{Ka_1}(\pi(a)))} 
\|\cos\theta_{\pi(a)}\|_{L^2(\partial D^+_{Ka_1}(\pi(a)))}
=K a_1\sqrt{ 2\pi H(\varphi^a,Ka_1,\pi(a)) }.
\end{split}
\end{equation}
In what follows we will estimate the right hand side of \eqref{eq:PhiGamma}. To this aim, recall that for every $r>0$ it holds
\[
\| e^{-i\frac{\theta_a}{2}} \varphi^a-\varphi\|_{C^\infty(\Omega\setminus D_r^+(0))} \to0
\quad \text{as } a\to0.
\]
Moreover, $\varphi$ satisfies \eqref{eq:phi_no_nodal_lines}. Hence we have
\begin{equation}\label{eq:phi_rho}
\varphi^a\mid_{\partial D^+_\rho(\pi(a))} 
=e^{i\frac{\theta_a}{2}} {c\rho\cos\theta_{\pi(a)}+h(\rho,\theta_{\pi(a)})} +o_{a_1}(1),
\quad \text{ for every } \rho>a_1,
\end{equation}
where $c\in\R$ and $h$ satisfies (see \eqref{eq:g_remainder})
\begin{equation}\label{eq:h_remainder}
\lim_{\rho\to0} \frac{\|h(\rho,\cdot)\|_{C^1(\partial D^+_\rho(\pi(a)))}}{\rho}=0.
\end{equation}
Let's first look at the boundary term in the right hand side of \eqref{eq:PhiGamma}. Taking into account that $R$ is fixed and $a_1\to0$, we have
\[
(i\nabla+A_a)\varphi^a\cdot\nu \, \overline{\Gamma}\mid_{\partial D^+_R(\pi(a))}
=i \left( K^2-\frac{\beta}{\pi} \right) \frac{a_1^2}{R} \left\{ c\cos^2\theta_{\pi(a)} +\frac{\partial h}{\partial\rho}(R,\theta_{\pi(a)}) \cos\theta_{\pi(a)} \right\} +o(a_1^2),
\]
\[
\varphi^a \, \overline{(i\nabla+A_a)\Gamma\cdot\nu}\mid_{\partial D^+_R(\pi(a))}
=i \left( K^2-\frac{\beta}{\pi} \right) \frac{a_1^2}{R} \left\{ c\cos^2\theta_{\pi(a)} 
+\frac{h(R,\theta_{\pi(a)})}{R} \cos\theta_{\pi(a)} \right\} +o(a_1^2),
\]
so that
\begin{equation}\label{eq:PhiGamma_rhs_boundary}
\begin{split}
-i \int_{\partial D_R^+(\pi(a))} \left\{ (i\nabla+A_a)\varphi^a\cdot\nu\overline{\Gamma}
+\varphi^a\overline{(i\nabla+A_a)\Gamma\cdot\nu} \right\} \dsigma \\
=c (\pi K^2-\beta) a_1^2 
+ \left( K^2-\frac{\beta}{\pi} \right) \frac{a_1^2}{R} \int_{\partial D_R^+(\pi(a))} 
\left( \frac{h(R,\theta_{\pi(a)} )}{R}+\frac{\partial h}{\partial\rho}(R,\theta_{\pi(a)} ) \right) \cos\theta_{\pi(a)}  \dsigma +o(a_1^2) \\
\geq c (\pi K^2-\beta) a_1^2 -C a_1^2 \left\| \frac{h(R,\cdot )}{R}+\frac{\partial h}{\partial\rho}(R,\cdot )  \right\|_{L^\infty(\partial D^+_R(\pi(a)))} +o(a_1^2) 
\geq C' K^2 a_1^2,
\end{split}
\end{equation}
for suitable $C',R>0$ and $a_1$ sufficiently small, thanks to \eqref{eq:h_remainder}.

Concerning the integral in the annulus in \eqref{eq:PhiGamma}, we replace \eqref{eq:Gamma_def} and \eqref{eq:phi_rho} to obtain
\begin{equation}\label{eq:PhiGamma_rhs_annulus}
\begin{split}
\left|\int_{(D_R^+\setminus D_{Ka_1}^+)(\pi(a))} p(x) \varphi^a\overline{\Gamma} \dx \right| 
\leq \|p\|_{L^\infty} \left| c\frac{\pi}{4}\left(K^2-\frac{\beta}{\pi}\right) a_1^2R^2 \right| \\
+ \|p\|_{L^\infty} \|h\|_{L^\infty} \int_{Ka_1}^R\int_{\partial D^+_\rho(\pi(a))} \left|
\sum_{\substack{n \geq 3 \\ n \text{ odd}}} b_n a_1^{n+1}\frac{\cos(n\theta_{\pi(a)})}{\rho^n} \right| \dsigma \drho +o(a_1^2)  \\
\leq C \left\{ a_1^2 K^2 R^2 + \sum_{\substack{n \geq 3 \\ n \text{ odd}}} |b_n| a_1^{n+1}\left| \frac{1}{R^{n-2}}-\frac{1}{(Ka_1)^{n-2}} \right|  \right\} +o(a_1^2)
\leq C a_1^2 K^2 R^2 +o(a_1^2),
\end{split}
\end{equation}
since $R,K$ are fixed while $a_1\to0$.

In order to estimate the last term, we apply Lemma \ref{lemma:Poincaré}, the equation satisfied by $\Phi$ and the expansion \eqref{eq:Phi_expansion} with $\rho=Ka_1>a_1$, as follows
\[
\begin{split}
\|\Phi\|^2_{L^2(D^+_{Ka_1}(\pi(a)))} 
\leq Ka_1 \int_{\partial D^+_{Ka_1}(\pi(a))} |\Phi|^2 \dsigma 
+(Ka_1)^2 \int_{D^+_{Ka_1}(\pi(a))} |(i\nabla+A_a)\Phi|^2 \dx \\
= Ka_1 \int_{\partial D^+_{Ka_1}(\pi(a))} |\Phi|^2 \dsigma 
-i (Ka_1)^2 \int_{\partial D^+_{Ka_1}(\pi(a))} (i\nabla+A_a)\Phi\cdot\nu \overline{\Phi}\dsigma 
=O(a_1^4).
\end{split}
\]
In a similar way
\[
\begin{split}
\|\varphi^a\|^2_{L^2(D^+_{Ka_1}(\pi(a)))} 
\leq Ka_1 \int_{\partial D^+_{Ka_1}(\pi(a))} |\varphi^a|^2 \dsigma \\
+(Ka_1)^2\left\{\lambda^a \int_{D^+_{Ka_1}(\pi(a))} p(x) |\varphi^a|^2\dx 
-i \int_{\partial D^+_{Ka_1}(\pi(a))} (i\nabla+A_a)\varphi^a\cdot\nu \overline{\varphi^a} \dsigma \right\},
\end{split}
\]
so that, using \eqref{eq:phi_rho},
\[
(1-\lambda^a \|p\|_{L^\infty} (Ka_1)^2) \|\varphi^a\|^2_{L^2(D^+_{Ka_1}(\pi(a)))} =O(a_1^4).
\]
The H\"older inequality provides
\begin{equation}\label{eq:PhiGamma_rhs_ball}
\left| \int_{D^+_{Ka_1}(\pi(a))} \varphi^a \overline{\Phi} \dx \right| \leq 
\|\varphi^a\|_{L^2(D^+_{Ka_1}(\pi(a)))} \|\Phi\|_{L^2(D^+_{Ka_1}(\pi(a)))} =O(a_1^{4}).
\end{equation}
By combining \eqref{eq:PhiGamma}, \eqref{eq:PhiGamma_lhs}, \eqref{eq:PhiGamma_rhs_boundary}, \eqref{eq:PhiGamma_rhs_annulus} and \eqref{eq:PhiGamma_rhs_ball}, we obtain
\[
Ka_1 \sqrt{2\pi H(\varphi^a,Ka_1,\pi(a))} \geq C(Ka_1)^2 -C'(K a_1 R)^2+o(a_1^2) 
\geq C'' (Ka_1)^2,
\]
for a suitable choice of $R$ and for $a_1$ sufficiently small, and hence the thesis.
\end{proof}
Lemma \ref{lemma:inferiorboundH} and \ref{lemma:H_ka} allow us to say that $H(\varphi^a, Ka_1, \pi(a)) = C (Ka_1)^2$ for $K > \max\{ \beta/\pi, 1\}$ and $a_1 < \bar{a}_1$ ($\bar{a}_1$ defined in Lemma \ref{lemma:H_ka}).

\subsection{Normalized blow-up at the pole}
In order to analyze the behavior of $\varphi^a$ near $a$ (for $|a|$ close to 0), we perform a normalized blow-up of the function near the pole. For a fixed $\eps>0$, let
\begin{equation}\label{eq:r_eps_def}
r_\eps, \bar{a}_{1,\eps},  k_\eps \text{ be as in Lemma \ref{lemma:boundforbigradiusN}}.
\end{equation}
We define
\begin{align} \label{eq:blowup}
\psi^a(y) = \frac{\varphi^a(a_1 y + \pi(a))}{\sqrt{H(\varphi^a,k_\eps a_1,\pi(a))}}, \quad y\in D_{R_0}^+(0), \ R_0=\frac{r_0}{a_1}.
\end{align}

Note that these are the functions which appear in the statement of Theorem \ref{theorem:blow_up_almgren} (with $K=k_\eps$) and that they are singular at $e=(0,1)$, independently of $a$.
We also remark that $\psi^a$ solves the problem
\begin{align} \label{eq:problemblowup}
\left\{ \begin{aligned}  (i \nabla + A_e)^2 \psi^a & = \lambda^a a_1^{2} \hat{p}(y) \psi^a \quad & D_{R_0}^+(0) \\
                         \psi^a & = 0  \quad & \{y_1=0\},
                        \end{aligned} \right.
\end{align}
where $\hat{p}(y) = p(a_1 y + \pi(a))$.

A direct calculation provides the following relations between the frequency function for $\varphi^a$ and that for $\psi^a$
\begin{align} 
& E(\psi^a,R,0,\lambda^a a_1^2,A_e) = \frac{E(\varphi^a,Ra_1,\pi(a),\lambda^a,A_a)}{H(\varphi^a,k_\eps a_1, \pi(a))},
\label{eq:equalityEblowup} \\
& H( \psi^a,R, 0) = \frac{H(\varphi^a,Ra_1, \pi(a))}{H(\varphi^a,k_\eps a_1, \pi(a))}, 
\label{eq:equalityHblowup} \\
& N(\psi^a,R,0,\lambda^a a_1^2, A_e) = N(\varphi^a,Ra_1,\pi(a),\lambda^a,A_a),  
\label{eq:equalityN}
\end{align}
for $R>1$. Here, with an abuse of notation, the frequency function for $\varphi^a$ contains the weight $p(x)$, while in the frequency function for $\psi^a$ appears $\hat{p}(y)$ due to the change of variables in the integral. This has no influence in the calculations, since both $p$ and $\hat{p}$ satisfy \eqref{eq:weight_assumptions}.

We will show that the boundedness of the Almgren's function implies the convergence of the blow-up sequence as $a_1\to0$.
To this aim, notice that Lemma \ref{lemma:boundforbigradiusN} and relations \eqref{eq:equalityEblowup}-\eqref{eq:equalityN} provide the following bounds.

\begin{lemma} \label{lemma:Nboundblowup2}
Given $\eps >0$, take the same assumptions and notations of Lemma \ref{lemma:boundforbigradiusN}. Let $\psi^a$ be as in \eqref{eq:blowup}.
Then
\begin{align} \label{eq:Nboundblowup2}
N( \psi^a, R,0,\lambda^a a_1^2, A_e) \leq 1 + \eps
\end{align}
for every $a_1 < \bar{a}_{1,\eps}$ and $k_\eps < R < r_\eps / a_1$, and
\begin{align} \label{eq:doublingformulablowup}
\frac{H(\psi^a,R_{2},0)}{H(\psi^a,R_{1},0)} 
\leq \left( \frac{R_{2}}{R_{1}} \right)^{2(1+\eps)}.
\end{align}
for every $a_1<\bar{a}_{1,\eps}$ and $k_\eps <R_1<R_2<r_\eps / a_1$.
\end{lemma}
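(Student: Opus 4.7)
The plan is to deduce this lemma directly from Lemma \ref{lemma:boundforbigradiusN} by means of the scaling identities \eqref{eq:equalityEblowup}--\eqref{eq:equalityN}, which were established precisely to make such a reduction immediate. The content of the lemma is purely a restatement at the blown-up scale, so no new analytical input is required beyond a careful change of variables.

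First I would prove \eqref{eq:Nboundblowup2}. By the scaling identity \eqref{eq:equalityN} we have
\[
N(\psi^a, R, 0, \lambda^a a_1^2, A_e) = N(\varphi^a, R a_1, \pi(a), \lambda^a, A_a).
\]
If $k_\eps < R < r_\eps / a_1$, then setting $r = R a_1$ yields $k_\eps a_1 < r < r_\eps$, so Lemma \ref{lemma:boundforbigradiusN} (the first inequality) applies to the right-hand side and gives the bound $1 + \eps$.

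Next I would prove \eqref{eq:doublingformulablowup}. By \eqref{eq:equalityHblowup},
\[
\frac{H(\psi^a, R_2, 0)}{H(\psi^a, R_1, 0)} = \frac{H(\varphi^a, R_2 a_1, \pi(a))}{H(\varphi^a, R_1 a_1, \pi(a))}.
\]
For $k_\eps < R_1 < R_2 < r_\eps / a_1$, the radii $r_1 = R_1 a_1$ and $r_2 = R_2 a_1$ satisfy $k_\eps a_1 < r_1 < r_2 < r_\eps$, so the doubling formula \eqref{eq:doublingformula} of Lemma \ref{lemma:boundforbigradiusN} applies, yielding
\[
\frac{H(\psi^a, R_2, 0)}{H(\psi^a, R_1, 0)} \leq \left( \frac{R_2 a_1}{R_1 a_1} \right)^{2(1+\eps)} = \left(\frac{R_2}{R_1}\right)^{2(1+\eps)}.
\]

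There is essentially no obstacle here: the only thing one needs to check carefully is the bookkeeping of the weight, since the rescaled equation \eqref{eq:problemblowup} features the weight $\hat{p}(y) = p(a_1 y + \pi(a))$ in place of $p(x)$, and the rescaled eigenvalue is $\lambda^a a_1^2$ rather than $\lambda^a$. Both $p$ and $\hat p$ satisfy the hypothesis \eqref{eq:weight_assumptions} uniformly, so the frequency function $N$ defined in \eqref{eq:N} transforms cleanly under the change of variables $x = a_1 y + \pi(a)$, which is precisely the content of identities \eqref{eq:equalityEblowup}--\eqref{eq:equalityN} already recorded above.
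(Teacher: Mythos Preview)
Your proof is correct and is exactly the approach the paper takes: the paper does not even give a separate proof of this lemma, stating only that it follows from Lemma \ref{lemma:boundforbigradiusN} together with the scaling identities \eqref{eq:equalityEblowup}--\eqref{eq:equalityN}. Your write-up simply makes this reduction explicit.
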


\begin{lemma} \label{lemma:normbound}
Given $\eps >0$, take the same assumptions and notations of Lemma \ref{lemma:boundforbigradiusN}. Let $\psi^a$ be as in \eqref{eq:blowup}.
For every $R > k_\eps$, there exists a constant $C(\eps,R) > 0$ such that
\begin{align}
\| \psi^a \|_{H^{1}_{A_e}(D_{R}^{+}(0))} \leq C(\eps,R) \qquad 
\text{for every } a_1<\min\left\{\frac{r_\eps}{R},\bar{a}_{1,\eps}\right\}  .
\end{align}
\end{lemma}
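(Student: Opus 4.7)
The plan is to combine the uniform frequency and doubling bounds from Lemma \ref{lemma:Nboundblowup2} with a Poincaré-type inequality, following the scheme of Lemma \ref{lemma:bound1} but applied in the blow-up variables. The key point is that, by construction \eqref{eq:blowup} and \eqref{eq:equalityHblowup}, we have the normalization $H(\psi^a,k_\eps,0)=1$. Feeding this into the doubling formula \eqref{eq:doublingformulablowup} gives
\[
H(\psi^a,R,0)\leq \left(\frac{R}{k_\eps}\right)^{2(1+\eps)}
\qquad\text{for all } k_\eps<R<r_\eps/a_1,
\]
which already bounds the $L^2$ norm on $\partial D_R^+(0)$ uniformly in $a$.

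Combining this with \eqref{eq:Nboundblowup2} yields
\[
E(\psi^a,R,0,\lambda^a a_1^2,A_e)=N\cdot H\leq (1+\eps)\left(\frac{R}{k_\eps}\right)^{2(1+\eps)}=:C_1(\eps,R).
\]
Next, I would reproduce the proof of Lemma \ref{lemma:bound1} at the level of $\psi^a$, using Lemma \ref{lemma:Poincaré} in $D_R^+(0)$ with the magnetic potential $A_e$. The smallness condition corresponds to $(a_1R)^2\lambda^a\|\hat{p}\|_{L^\infty}<1/2$, which holds automatically since $a_1R<r_\eps\leq r_0$ and $r_0$ is chosen as in \eqref{eq:r_0}. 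This gives
\[
\frac{1}{R^2}\int_{D_R^+(0)}|\psi^a|^2\dy\leq \frac{E(\psi^a,R,0,\lambda^a a_1^2,A_e)+H(\psi^a,R,0)}{1-C(a_1R)^2}\leq C_2(\eps,R),
\]
so the $L^2$ norm of $\psi^a$ on $D_R^+(0)$ is uniformly controlled.

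For the magnetic gradient, I would simply use the definition of $E$:
\[
\int_{D_R^+(0)}|(i\nabla+A_e)\psi^a|^2\dy=E(\psi^a,R,0,\lambda^a a_1^2,A_e)+\lambda^a a_1^2\int_{D_R^+(0)}\hat{p}(y)|\psi^a|^2\dy\leq C_1+\lambda^a a_1^2\|\hat{p}\|_{L^\infty}R^2\cdot C_2,
\]
which is bounded by a constant depending only on $\eps$ and $R$ (the second term in fact vanishes as $a_1\to0$). Putting the two estimates together gives $\|\psi^a\|_{H^1_{A_e}(D_R^+(0))}\leq C(\eps,R)$.

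There is no substantial obstacle here: the statement is essentially a translation into the blow-up variables of the Poincaré-type bound of Lemma \ref{lemma:bound1}, once the uniform frequency bound of Lemma \ref{lemma:Nboundblowup2} is in hand. The only point requiring a small check is that the smallness condition $(a_1R)^2\ll 1$ needed in the Poincaré argument is consistent with the admissible range $k_\eps a_1<Ra_1<r_\eps$, which is guaranteed by the choice of $r_\eps$ in \eqref{eq:r_eps_def}.
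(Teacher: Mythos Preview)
Your proposal is correct and follows essentially the same approach as the paper: bound $H$ via the normalization and the doubling formula, then $E$ via the frequency bound, and finally combine with the Poincar\'e inequality \eqref{eq:Poincaré} to control both the $L^2$ norm and the magnetic gradient. The only cosmetic difference is the order: you first bound $\|\psi^a\|_{L^2}$ by invoking the analogue of Lemma~\ref{lemma:bound1} and then the gradient, whereas the paper first isolates the gradient term (substituting the Poincar\'e inequality back into the definition of $E$) and then bounds the $L^2$ norm; both orderings are equivalent.
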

\begin{proof}
Relation \eqref{eq:doublingformulablowup} and our choice of the normalization provide
\begin{align}\label{eq:H_psi_bound}
H(\psi^a, R,0) = \frac{H(\psi^a,R,0)}{H(\psi^a,k_\eps,0)} \leq \left(\frac{R}{k_\eps}\right)^{2 (1 + \eps)}\leq C(\eps) R^{2 (1 + \eps)}.
\end{align}
This, together with the definition of $N$ and \eqref{eq:Nboundblowup2}, implies
\[
E(\psi^a,R,0,\lambda^a a_1^2, A_e) = N(\psi^a,R,0,\lambda^a a_1^2, A_e) H(\psi^a,R,0)
\leq  C(\eps) R^{2 (1 + \eps)}.
\]
Both relations hold for $R>k_\eps$ and $a_1<\min\{ r_\eps/R, \bar{a}_{1,\eps} \}$.
Then
\begin{align*}
\int_{D_{R}^{+}(0)} | (i \nabla + A_e) \psi^a |^{2}\dy  
\leq C(\eps) R^{2 (1 + \eps)} + \lambda^a a_1^{2} \int_{D_{R}^{+}(0)} \hat{p}(y) |\psi^a|^{2}\dy \\
             \leq C(\eps) R^{2 (1 + \eps)} + \lambda^a a_1^{2} \|p\|_{L^\infty} R^2 \left( H( \psi^a,R,0) + \int_{D_{R}^{+}(0)} |(i \nabla + A_e) \psi^a|^{2} \dy \right) \\
             \leq C(\eps) R^{2 (1 + \eps)} + \lambda^a \|p\|_{L^\infty} r_\eps^2 C(\eps) R^{2 (1 + \eps)} + \lambda^a \|p\|_{L^\infty} r_\eps^{2} \int_{D_{R}^{+}(0)} |(i \nabla + A_e) \psi^a|^{2}\dy.
\end{align*}
At the second line we used the Poincaré inequality \eqref{eq:Poincaré}, at the third line we used \eqref{eq:H_psi_bound} and the fact that $R \leq r_{\eps}/a_1$. Then, thanks to \eqref{eq:r_0}, we have
\begin{align*}
\int_{D_{R}^{+}(0)} |(i \nabla + A_e) \psi^a|^{2} \, \dy \leq C(\eps) R^{2 (1 + \eps)} \frac{ 1 + \lambda^a \|p\|_{L^\infty} r_{\eps}^{2}}{1 - \lambda^a \|p\|_{L^\infty} r_{\eps}^{2}}.
\end{align*}
We look then at the second part of the norm. Using Poincaré inequality \eqref{eq:Poincaré}, we obtain
\begin{align*}
\int_{D_R^+(0)} |\psi^a|^2\dy  \leq R^2 H(\psi^a, R,0) + R^2 \int_{D_R^+(0)} | (i \nabla +A_e) \psi^a |^2\dy  \leq C(\eps,R),
\end{align*}
where we used the previous inequality and \eqref{eq:H_psi_bound}.
Finally, we combine the two contributions and obtain a constant depending only on $R$ and $\eps$.
\end{proof}

\begin{lemma} \label{lemma:weakconvergence}
Given $\eps >0$, take the same assumptions and notations of Lemma \ref{lemma:boundforbigradiusN}. Let $\psi^a$ be as in \eqref{eq:blowup}.
There exists $\psi \in H^1_{A_e,loc}(\mathbb{R}^{2}_+)$, $\psi\not\equiv0$, such that for every $R > k_\eps$ we have, up to a subsequence, $\psi^a \to \psi$ in $H^{1}_{A_e}(D_{R}^{+}(0))$ as $|a|\to0$. Moreover, $\psi$ solves
\begin{align} \label{eq:limitmagneticproblem}
\left\{ \begin{aligned} (i \nabla + A_e)^2 \psi& = 0 \quad & \mathbb{R}^{2}_+ \\
                            \psi & = 0 \quad & \{y_1=0\}.
                            \end{aligned} \right. 
\end{align}
\end{lemma}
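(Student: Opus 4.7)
The strategy is the standard weak-to-strong compactness argument for blow-up sequences, built on the uniform bounds already obtained in Lemma \ref{lemma:normbound}. The key inputs are: (i) the equation \eqref{eq:problemblowup}, whose right-hand side carries the factor $a_1^2 \to 0$; (ii) the magnetic Hardy inequality \eqref{eq:hardy} on $D_R^+(0)$ with singularity at the fixed point $e$, which yields a continuous embedding $H^1_{A_e}(D_R^+(0))\hookrightarrow H^1(D_R^+(0))$ and hence, via Rellich, compactness into $L^2(D_R^+(0))$; and (iii) the built-in normalization $H(\psi^a,k_\eps,0)=1$ obtained from \eqref{eq:equalityHblowup} by setting $R=k_\eps$.

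First I would fix a sequence $a^{(n)}\to 0$ with $a^{(n)}_1<\bar a_{1,\eps}$ and apply Lemma \ref{lemma:normbound} together with a diagonal extraction over radii $R=k_\eps+m$, $m\in\N$, to obtain $\psi\in H^1_{A_e,loc}(\R^2_+)$ and a subsequence (still denoted $\psi^a$) such that $\psi^a\rightharpoonup\psi$ weakly in $H^1_{A_e}(D_R^+(0))$ for every $R>k_\eps$. The compactness of the embedding into $L^2(D_R^+(0))$ yields strong $L^2$ convergence, and the continuity of the trace into $L^2(\partial D_R^+(0))$ gives $\psi=0$ on $\{y_1=0\}\cap\partial D_R^+(0)$, hence $\psi=0$ on $\{y_1=0\}$.

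Next I would pass to the limit in the weak formulation of \eqref{eq:problemblowup}. For every $\phi\in C^\infty_0(\R^2_+\setminus\{e\},\C)$, one has
\[
\int_{\R^2_+} (i\nabla+A_e)\psi^a\cdot\overline{(i\nabla+A_e)\phi}\,\dy
= \lambda^a a_1^2\int_{\R^2_+}\hat p(y)\,\psi^a\,\overline{\phi}\,\dy.
\]
The left-hand side converges by weak convergence, while the right-hand side tends to $0$ since $a_1^2\to 0$, $\hat p$ is uniformly bounded, and $\|\psi^a\|_{L^2(\mathrm{supp}\,\phi)}$ is bounded. This shows that $\psi$ solves \eqref{eq:limitmagneticproblem} in the weak sense, and elliptic regularity (away from $e$) then gives a classical solution on $\R^2_+\setminus\{e\}$.

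To upgrade to strong convergence in $H^1_{A_e}(D_R^+(0))$ for each $R>k_\eps$, I would test the equation for $\psi^a$ against $\psi^a$ itself on $D_R^+(0)$:
\[
\int_{D_R^+(0)}|(i\nabla+A_e)\psi^a|^2\,\dy
=\lambda^a a_1^2\!\int_{D_R^+(0)}\!\hat p\,|\psi^a|^2\,\dy
-i\!\int_{\partial D_R^+(0)}\!(i\nabla+A_e)\psi^a\cdot\nu\,\overline{\psi^a}\,\dsigma,
\]
and the analogous identity for $\psi$. The volume term on the right tends to $0$ since $a_1^2\to 0$ and $\|\psi^a\|_{L^2(D_R^+(0))}$ is bounded. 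For the boundary term, by Fubini one can choose a slightly perturbed radius $R'\in(R,R+\delta)$ along which the normal traces of $(i\nabla+A_e)\psi^a$ converge in $L^2$; combining with the $L^2$ convergence of $\psi^a$ on $\partial D_{R'}^+(0)$ (compact trace) yields convergence of the boundary integral. Hence $\|(i\nabla+A_e)\psi^a\|_{L^2(D_{R'}^+(0))}^2\to\|(i\nabla+A_e)\psi\|_{L^2(D_{R'}^+(0))}^2$, which together with weak convergence gives strong convergence in $H^1_{A_e}(D_{R'}^+(0))$, and in particular in $H^1_{A_e}(D_R^+(0))$. Finally, $\psi\not\equiv 0$ follows from the normalization: strong convergence in $H^1_{A_e}(D_{R}^+(0))$ with $R>k_\eps$ implies $L^2$ convergence of traces on $\partial D_{k_\eps}^+(0)$, so $H(\psi,k_\eps,0)=\lim H(\psi^a,k_\eps,0)=1$.

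The main technical point is the boundary-integral step in the passage from weak to strong convergence; this is handled by the Fubini/slicing trick choosing a good radius $R'$, which avoids any loss of compactness coming from the fixed magnetic singularity at $e$.
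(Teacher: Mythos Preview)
Your overall strategy is sound and matches the paper's up through the weak limit, the identification of the limiting equation, and the nontriviality of $\psi$ via the trace normalization. The difference---and the only real issue---lies in the upgrade to strong $H^1_{A_e}$ convergence.

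The gap is in the Fubini/slicing step. You claim that ``by Fubini one can choose a slightly perturbed radius $R'$ along which the normal traces of $(i\nabla+A_e)\psi^a$ converge in $L^2$''. But Fubini applied to
\[
\int_R^{R+\delta}\|(i\nabla+A_e)(\psi^a-\psi)\|^2_{L^2(\partial D_r^+(0))}\,dr
=\|(i\nabla+A_e)(\psi^a-\psi)\|^2_{L^2((D_{R+\delta}^+\setminus D_R^+)(0))}
\]
only tells you that the slice integrals go to zero along a subsequence \emph{provided} the right-hand side tends to zero, i.e.\ provided $(i\nabla+A_e)\psi^a\to(i\nabla+A_e)\psi$ strongly in $L^2$ of the annulus---which is precisely what you are trying to prove. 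With only weak $L^2$ convergence of the magnetic gradients you get, at best, uniform $L^2$-boundedness of the trace on almost every circle, not convergence. So the boundary term does not obviously converge.

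A clean fix within your own framework is to bypass the boundary term entirely. Test the equation $(i\nabla+A_e)^2(\psi^a-\psi)=\lambda^a a_1^2\hat p\,\psi^a$ against $(\psi^a-\psi)\eta$, with $\eta$ a smooth cutoff equal to $1$ on $D_R^+(0)$ and supported in $D_{R+\delta}^+(0)$ (no need for $\eta$ to vanish near $e$, since $(\psi^a-\psi)\eta\in H^1_{A_e}$ and has zero trace on the whole boundary of $D_{R+\delta}^+(0)$). Expanding,
\[
\int \eta\,|(i\nabla+A_e)(\psi^a-\psi)|^2
+\int (i\nabla+A_e)(\psi^a-\psi)\cdot\overline{(\psi^a-\psi)\,i\nabla\eta}
=\int \lambda^a a_1^2\hat p\,\psi^a\,\overline{(\psi^a-\psi)\eta}.
\]
The cross term is (bounded in $L^2$)$\times$(strongly $\to 0$ in $L^2$), hence $\to 0$; the right-hand side likewise. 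This gives $\int_{D_R^+(0)}|(i\nabla+A_e)(\psi^a-\psi)|^2\to 0$ directly, and the strong convergence follows.

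For comparison, the paper takes a different route for the strong convergence step: it observes that $(i\nabla+A_e)^2(\psi^a-\psi)=\lambda^a a_1^2\hat p\,\psi^a\to 0$ in every $L^p$, applies the Kato inequality $-\Delta|\psi^a-\psi|\le |(i\nabla+A_e)^2(\psi^a-\psi)|$, and then invokes scalar elliptic regularity to obtain $|\psi^a-\psi|\to 0$ in $W^{2,p}(D_R^+(0))$, hence $C^{1,\tau}_{\mathrm{loc}}$ away from $e$ and $H^1$ overall. The Kato route has the virtue of handling the singular point $e$ uniformly without any cutoff; your norm-convergence route is more elementary once the cutoff device is in place.
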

\begin{proof}
By Lemma \ref{lemma:normbound}, there exists $\psi$ such that, up to a subsequence, $\psi^a \rightharpoonup \psi$ in $H^1_{A_e}(D_R^+(0))$ and $\psi^a \rightarrow \psi$ in $L^2(D_R^+(0))$ as $|a| \to 0$.
Due to the compactness of the trace embedding, we have $\int_{\partial D_{k_\eps}^+(0)} |\psi|^2\dsigma=k_\eps$, so that $\psi\not\equiv0$.
For every $R > k_\eps$ and for every test function $\phi \in C_0^\infty(D_R^+(0) \backslash \{e\})$, we have
\begin{align*}
\int_{D_R^+(0)} (i \nabla + A_e) \psi^a \cdot \overline{(i \nabla + A_e) \phi}\dy = \lambda^a a_1^2 \int_{D_R^+(0)} \hat{p}(y) \psi^a \bar{\phi} \dy.
\end{align*}
By the weak convergence in $H^1_{A_e}(D_R^+(0))$, the first term converges
\begin{align*}
\int_{D_R^+(0)} (i \nabla + A_e) \psi^a \cdot \overline{(i \nabla + A_e) \phi}\dy \rightarrow \int_{D_R^+(0)} (i \nabla + A_e) \psi \cdot \overline{(i\nabla + A_e)\phi} \dy.
\end{align*}
We estimate the second term as follows by means of Lemma \ref{lemma:normbound}
\begin{align*}
\left| \lambda^a a_1^2 \int_{D_R^+(0)} \hat{p}(y) \psi^a \bar{\phi} \dy\right|
&\leq \lambda^a a_1^2 \|p\|_{L^\infty} \|\phi\|_{L^2(D_R^+(0))} \|\psi^a\|_{L^2(D_R^+(0))} \\
&\leq C a_1^2 \| \psi^a \|_{H^1_{A_e}(D_R^+(0))} \leq C(\eps,R) a_1^2 \rightarrow 0,
\end{align*}
so that $\psi$ solves the limit equation \eqref{eq:limitmagneticproblem}. In order to prove the strong convergence, we consider the equation satisfied by $\psi^a-\psi$. We have
\[
(i \nabla + A_e)^2 (\psi^a - \psi) = \lambda^a a_1^{2} \hat{p}(y) \psi^a  \quad 
\text{in } D_{R}^{+}(0).
\]
By Lemma \ref{lemma:normbound} and the Sobolev embeddings, the right hand side above converges to 0 in $L^p(D_R^+(0))$ for every $p<\infty$ as $|a|\to0$. The Kato inequality
\[
-\Delta|\psi^a-\psi| \leq |(i\nabla+A_e)^2(\psi^a-\psi)| 
\]
(see for example \cite{Kato1972}) and the standard regularity theory for elliptic equations, imply that $|\psi^a-\psi|\to0$ in $W^{2,p}(D_R^+(0))$ for every $p<\infty$ as $|a|\to0$. This in turn implies that the convergence is $C^{1,\tau}_{loc}(D_R^+(0)\setminus\{e\})$ for every $\tau\in (0,1)$ and $H^1(D_r^+(0))$.
\end{proof}

As a consequence of the strong convergence and of Lemma \ref{lemma:Nboundblowup2}, we deduce the following.

\begin{lemma} \label{lemma:boundNpsi}
Let $\psi$ be defined in Lemma \ref{lemma:weakconvergence}. We have
\begin{align} \label{eq:boundNpsi}
 N(\psi, R,0, 0, A_e)  \leq 1 + \eps \quad\text{ for every } R>k_\eps,
\end{align}
\begin{align} \label{eq:doublingpsi}
\frac{H(\psi, R_2,0)}{H(\psi, R_1,0)} \leq \left( \frac{R_2}{R_1} \right)^{2(1+\eps)} \quad\text{ for every } k_\eps < R_1 < R_2.
\end{align}
\end{lemma}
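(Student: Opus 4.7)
The plan is to obtain both estimates by passing to the limit $a\to 0$ in the corresponding inequalities of Lemma~\ref{lemma:Nboundblowup2}, relying on the strong convergence $\psi^a\to\psi$ in $H^1_{A_e}(D_R^+(0))$ for each fixed $R>k_\eps$ provided by Lemma~\ref{lemma:weakconvergence}. Fix any $R>k_\eps$ (respectively $k_\eps<R_1<R_2$); for $a_1$ small enough we have $R<r_\eps/a_1$ (respectively $R_2<r_\eps/a_1$), so Lemma~\ref{lemma:Nboundblowup2} applies and yields the desired bounds along the blow-up sequence.

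First, I would show that the building blocks of the frequency function pass to the limit. By strong $H^1_{A_e}$ convergence on $D_R^+(0)$ and continuity of the trace $H^1(D_R^+(0))\to L^2(\partial D_R^+(0))$, we get
\[
H(\psi^a,R,0)\longrightarrow H(\psi,R,0),\qquad \int_{D_R^+(0)}|(i\nabla+A_e)\psi^a|^2\dy\longrightarrow \int_{D_R^+(0)}|(i\nabla+A_e)\psi|^2\dy.
\]
Moreover, since $\lambda^a\to\lambda$ is finite while $a_1\to 0$, and $\|\psi^a\|_{L^2(D_R^+(0))}$ is uniformly bounded by Lemma~\ref{lemma:normbound},
\[
\lambda^a a_1^2\int_{D_R^+(0)}\hat p(y)|\psi^a|^2\dy=O(a_1^2)\longrightarrow 0.
\]
Hence $E(\psi^a,R,0,\lambda^aa_1^2,A_e)\to E(\psi,R,0,0,A_e)$.

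Next, to divide by $H(\psi,R,0)$ we must check it is strictly positive. This is automatic: the proof of Lemma~\ref{lemma:weakconvergence} gives $H(\psi,k_\eps,0)=1$, and since $\psi$ solves \eqref{eq:limitmagneticproblem} (zero right-hand side), the derivative formula of Lemma~\ref{lemma:derivativeofH} applied with $\lambda=0$ yields $\frac{d}{dr}H(\psi,r,0)=\frac{2}{r}\int_{D_r^+(0)}|(i\nabla+A_e)\psi|^2\dy\geq 0$, so $H(\psi,R,0)\geq 1$ for all $R\geq k_\eps$. Combining these convergences gives
\[
N(\psi,R,0,0,A_e)=\lim_{a\to 0}N(\psi^a,R,0,\lambda^a a_1^2,A_e)\leq 1+\eps,
\]
which is \eqref{eq:boundNpsi}. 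For \eqref{eq:doublingpsi}, applying the convergence $H(\psi^a,R_i,0)\to H(\psi,R_i,0)>0$ for $i=1,2$ to the second inequality of Lemma~\ref{lemma:Nboundblowup2} gives the doubling bound in the limit.

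There is no real obstacle here: the only delicate point is ensuring that the inequalities of Lemma~\ref{lemma:Nboundblowup2}, valid on the $a$-dependent range $k_\eps<R<r_\eps/a_1$, can be used at any prescribed $R$ in the limit; this is immediate because for each such $R$ the admissibility condition is satisfied for all $a_1$ sufficiently small, and Lemma~\ref{lemma:weakconvergence} supplies convergence on $D_R^+(0)$ along a common subsequence.
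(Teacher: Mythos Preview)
Your proposal is correct and follows exactly the approach indicated in the paper, which simply states the lemma ``as a consequence of the strong convergence and of Lemma~\ref{lemma:Nboundblowup2}.'' You have carefully supplied the details the paper omits: convergence of $E$ and $H$ via strong $H^1_{A_e}$ convergence and the trace embedding, vanishing of the $\lambda^a a_1^2$ term, and positivity of $H(\psi,R,0)$ for $R\geq k_\eps$ so that division is legitimate.
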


\begin{lemma}\label{lemma:d_limit}
Let $\psi$ be defined in Lemma \ref{lemma:weakconvergence}. There exists $d \, \in [0, + \infty]$ such that
\[
\lim_{R \to + \infty} N(\psi, R,0, 0, A_e)=d.
\]
\end{lemma}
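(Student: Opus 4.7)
The plan is to adapt the frequency–function machinery of Section \ref{sec:freq_formula} directly to $\psi$ itself, exploiting that $\psi$ solves a \emph{homogeneous} magnetic equation on $\mathbb{R}^2_+$ (no weight, $\lambda=0$), with a single Aharonov--Bohm singularity at $e \in D_R^+(0)$ for every $R>1$.

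First, I would apply the analogues of Lemmas \ref{lemma:derivativeofH}, \ref{lemma:pohozaev}, and \ref{lemma:derivativeofE} to $\psi$. Since $\lambda=0$ and the weight disappears, we get, for $R>1$,
\[
\frac{d}{dR} H(\psi,R,0) = \frac{2}{R}\, E(\psi,R,0,0,A_e), \qquad
\frac{d}{dR} E(\psi,R,0,0,A_e) = 2\!\!\int_{\partial D_R^+(0)}\!\!|(i\nabla+A_e)\psi \cdot \nu|^2 \, d\sigma - \frac{2}{R} M_\psi,
\]
where $M_\psi$ is the Pohozaev residue at $e$. Proposition \ref{proposition:asymptotic_expansion_eigenfunction} applied to $\psi$ gives a $\sqrt{|y-e|}$ expansion, and the same computation as in Lemma \ref{lemma:M_a_rewritten} (with the center now at $0$ and the pole at $e$, so only the first term in \eqref{eq:M_a_def} contributes a non-zero finite limit) shows that $M_\psi$ is a finite explicit constant.

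Second, combining these identities exactly as in \eqref{eq:N_derivative}--\eqref{eq:logN_derivative} and applying Schwarz's inequality to $E$ on $\partial D_R^+(0)$, I would obtain the differential inequality
\[
\frac{d}{dR}\log\!\bigl(N(\psi,R,0,0,A_e)+1\bigr) \;\geq\; -\frac{2|M_\psi|}{R\, H(\psi,R,0)}.
\]
The weight-free analogue of Lemma \ref{lemma:inferiorboundH} yields $H(\psi,R,0) \geq (R/k_\eps)^2 H(\psi,k_\eps,0)$ for $R>k_\eps$, so the right-hand side is bounded below by $-C/R^{3}$, which is integrable near $+\infty$.

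Finally, set
\[
\Psi(R) := \log\!\bigl(N(\psi,R,0,0,A_e)+1\bigr) + \int_{k_\eps}^{R} \frac{2|M_\psi|}{r\, H(\psi,r,0)}\, dr.
\]
The inequality above gives $\Psi'(R)\geq 0$, so $\Psi$ is non-decreasing. By Lemma \ref{lemma:boundNpsi}, $\log(N+1)\leq\log(2+\eps)$ for $R>k_\eps$, and the integral term is bounded since its integrand is $O(R^{-3})$. Hence $\Psi$ is monotone and bounded, so it converges as $R\to\infty$; since the integral term has its own finite limit, $\log(N(\psi,R,0,0,A_e)+1)$, and therefore $N(\psi,R,0,0,A_e)$, admits a limit $d \in [0,+\infty]$. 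The main technical point, and the one I expect to require the most care, is the rigorous derivation of the Pohozaev identity for $\psi$ on half-disks containing the singular pole $e$: the identity must be justified by removing a small disk around $e$, passing to the limit using the expansion from Proposition \ref{proposition:asymptotic_expansion_eigenfunction}, and verifying that the boundary contribution on $\{y_1=0\}$ vanishes thanks to $\psi=0$ there; once $M_\psi$ is in place as a finite constant, the monotonicity-up-to-an-integrable-perturbation argument closes the proof.
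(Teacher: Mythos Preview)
Your proof is correct and follows essentially the same route as the paper: derive the differential inequality for the frequency from the Pohozaev identity (with the residue $M_\psi$ at $e$ a finite constant), control the defect via $H(\psi,R,0)\gtrsim R^2$, and conclude. Your final step---packaging $\log(N+1)$ plus the integral of the defect as a single non-decreasing bounded function $\Psi$---is slightly more direct than the paper's, which integrates the inequality on $N$ itself to obtain $N(R_2)-N(R_1)\geq C(R_2^{-2}-R_1^{-2})$ and then argues the existence of the limit by contradiction.
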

\begin{proof}
Reasoning as in \eqref{eq:N_derivative} we find
\begin{align*}
\frac{\mathrm{d}}{\mathrm{d}R} N(\psi, R, 0,0, A_e) \geq -\frac{2|M|}{RH(\psi,R,0)},
\end{align*}
where $M$ is now the constant
\[
M=\lim_{\eps\to0}Re\int_{\partial D_\eps(e)} (i\nabla+A_e)\psi\cdot\nu \overline{(i\nabla+A_e)\psi\cdot y} \dsigma.
\]
We can prove as in Lemma \ref{lemma:inferiorboundH} that
\[
\frac{H(\psi,R,0)}{H(\psi,k_\eps,0)}\geq \left(\frac{R}{k_\eps}\right)^2,
\]
for $R>k_\eps$.
Recalling that $H(\psi,k_\eps,0)=1$, we obtain
\begin{align*}
\frac{\mathrm{d}}{\mathrm{d}R} N(\psi, R, 0,0, A_e) \geq - \frac{C k_\eps^2}{R^3},
\end{align*}
for a positive constant $C$. Let us show that this implies the existence of the limit. Let for the moment $N(R)=N(\psi, R, 0,0, A_e)$. Integrating the last inequality in $(R_1,R_2)$, with $k_\eps<R_1<R_2$, we obtain
\begin{equation}\label{eq:NR_2minusNR_1}
N(R_2)-N(R_1) \geq C k_\varepsilon^2 \left(\frac{1}{R_2^2}-\frac{1}{R_1^2}\right).
\end{equation}
If $d=+\infty$ there is nothing to prove. Otherwise, we claim that $N(R)$ is bounded. Indeed, $d\neq\infty$ implies the existence of $K>0$ and of a sequence $R_n\to\infty$ such that $N(R_n)<K$ for every $n$, so that for $R$ sufficiently large and $R_n>R$ we have, by \eqref{eq:NR_2minusNR_1}
\[
N(R)\leq N(R_n)-C k_\varepsilon^2 \left(\frac{1}{R_n^2}-\frac{1}{R^2}\right)
\leq K+o(1) \quad \text{as } R\to\infty,
\]
so that $N$ is bounded. Suppose by contradiction that $N(R)$ does not admit limit $d\in [0,\infty)$. Then for every $\delta>0$ there exists a sequence $R_n\to\infty$ such that $|N(R_n)-N(R_{n+1})|\geq\delta$. The case $N(R_n)\geq N(R_{n+1})+\delta$ contradicts \eqref{eq:NR_2minusNR_1} if $R_n$ is great enough, the case $N(R_{n+1}) \geq N(R_n)+\delta$ contradicts the fact that $N$ is bounded.
\end{proof}

In the next subsection we will prove that $d=1$.

\subsection{Proof of Theorem \ref{theorem:blow_up_almgren}}

In order to study the behavior of the limit function $\psi$ at infinity, we perform a rescaling (blow-down) on the independent variable by a factor $R$. As before, using the boundedness of the Almgren's frequence of $\psi$, we prove the convergence of the blow-down function as $R \to \infty$. Moreover, we will prove that the limit function is an homogeneous harmonic function of degree $1$. Then, this aims us to conclude that sufficiently far from the singularity $\psi$ behaves like an harmonic function, up to a complex phase. More specifically, we prove that this function $\psi$ verifies the conditions of Proposition \ref{prop:unique_limit_profile}.  

\begin{lemma} \label{lemma:normboundblowdown}
Let $\psi$ be the function introduced in Lemma \ref{lemma:weakconvergence}. We define
\begin{align} \label{eq:blowdown}
w_R(x) = \frac{\psi(Rx)}{\sqrt{H(\psi,R,0)}}.
\end{align}
For every $r>0$ there exists a constant $C(\eps,r)$ such that $\| w_R \|_{H^1_{A_{e/R}}(D_r^+(0))} \leq C(\eps,r)$ for every $R>k_\eps$.
\end{lemma}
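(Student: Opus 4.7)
The plan is to transplant the bounds on $\psi$ at large radii provided by Lemma \ref{lemma:boundNpsi} into bounds on $w_R$ at the fixed radius $r$, by exploiting the way the magnetic potential transforms under the rescaling $y=Rx$. First I would record the elementary identity
\[
A_{e/R}(x) = R\, A_e(Rx),
\]
which follows directly from \eqref{eq:magnetic_potential_definition} upon writing $x-e/R = (y-e)/R$. From it one gets
\[
(i\nabla_x + A_{e/R}(x)) w_R(x) = \frac{R}{\sqrt{H(\psi,R,0)}}\, \bigl[(i\nabla_y + A_e(y))\psi(y)\bigr]_{y=Rx},
\]
so $w_R$ solves $(i\nabla+A_{e/R})^2 w_R = 0$ in $\mathbb{R}^2_+$ with vanishing trace on $\{x_1=0\}$. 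A straightforward change of variables in the definitions of $H$ and $E$ (using $d\sigma(y) = R\, d\sigma(x)$ and $dy = R^2 dx$) then yields, for every $\rho > 0$,
\[
H(w_R,\rho,0) = \frac{H(\psi,R\rho,0)}{H(\psi,R,0)}, \qquad E(w_R,\rho,0,0,A_{e/R}) = \frac{E(\psi,R\rho,0,0,A_e)}{H(\psi,R,0)},
\]
and in particular $N(w_R,\rho,0,0,A_{e/R}) = N(\psi,R\rho,0,0,A_e)$.

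Next, given $r > 0$, I would set $r' := \max\{r,1\} \geq 1$, so that $Rr' \geq R > k_\eps$ and the hypotheses of Lemma \ref{lemma:boundNpsi} are met at radius $Rr'$. Applying \eqref{eq:doublingpsi} with $R_1 = R$, $R_2 = Rr'$ gives
\[
H(w_R,r',0) = \frac{H(\psi,Rr',0)}{H(\psi,R,0)} \leq (r')^{2(1+\eps)},
\]
while \eqref{eq:boundNpsi} yields $N(w_R,r',0,0,A_{e/R}) \leq 1+\eps$. Multiplying these bounds controls $E(w_R,r',0,0,A_{e/R}) = \|(i\nabla+A_{e/R})w_R\|_{L^2(D_{r'}^+(0))}^2$ by a constant depending only on $\eps$ and $r$. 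Since $D_r^+(0) \subset D_{r'}^+(0)$ and the integrand is non-negative, this dominates the corresponding norm on the ball of interest.

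Finally, I would apply the magnetic Poincaré inequality of Lemma \ref{lemma:Poincaré} (with pole $e/R \in D_{r'}^+(0)$ and $w_R = 0$ on $\{x_1=0\}$) on $D_{r'}^+(0)$:
\[
\int_{D_{r'}^+(0)} |w_R|^2\, dx \leq (r')^2 H(w_R,r',0) + (r')^2 E(w_R,r',0,0,A_{e/R}),
\]
and both terms on the right are already under control. Restricting to $D_r^+(0)$ and combining with the gradient bound gives the desired estimate. There is no substantive obstacle: all the analytic content has been packed into Lemma \ref{lemma:boundNpsi}, and the only mild technicality is that the doubling formula only furnishes an upper bound when one ascends in radius, which is precisely why I enlarge $r$ to $r' \geq 1$ before invoking it and then descend again by trivial monotonicity of the integrals.
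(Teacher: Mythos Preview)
Your proof is correct and follows essentially the same route as the paper's: both use the scaling identities $H(w_R,\rho,0)=H(\psi,R\rho,0)/H(\psi,R,0)$ and $N(w_R,\rho,0,0,A_{e/R})=N(\psi,R\rho,0,0,A_e)$ to transfer the bounds of Lemma~\ref{lemma:boundNpsi} to $w_R$, and then apply the Poincar\'e inequality of Lemma~\ref{lemma:Poincaré} to control the $L^2$ part of the norm. The only cosmetic difference is that the paper simply proves the bound for $r>1$ (leaving the case $r\le 1$ implicit by monotonicity of the integrals), whereas you make this step explicit via $r'=\max\{r,1\}$.
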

\begin{proof}
For $r>1$ and $R > k_\eps$ we have
\[
N(w_R, r,0, 0, A_{e/R}) = N(\psi, rR,0, 0, A_e)\leq 1 + \eps
\quad\text{and}\quad
H(w_R, r,0) = \frac{H(\psi,rR,0)}{H(\psi,R,0)} \leq r^{2 (1 + \eps)},
\]
by Lemma \ref{lemma:boundNpsi}. By combining the two, we obtain
\[
E(w_R, r, 0,0, A_{e/R}) \leq (1 + \eps) r^{2 ( 1 + \eps)}
\]
for every $r>1$ and $R > k_\eps$. As a consequence, using Lemma \ref{lemma:Poincaré} we estimate
\[
\| w_R \|_{H^1_{A_{e/R}}(D_r^+(0))}\leq (1+r^2)E(w_R,r,0,0,A_{e/R})+ r^2H(w_R,r,0) \leq C(\eps,r)
\]
for $R > k_\eps$.
\end{proof}

\begin{lemma}
Let $w_R$ be defined in \eqref{eq:blowdown}. There exists $w \in H^1_{loc}(\R^2_+)$, $w\not\equiv  0$, such that $e^{-i\theta_{e/R}/2} w_R \rightharpoonup w$ in $H^1_{loc}(\R^2_+)$. 
In addition, $w$ is harmonic in $\R^2_+$ with zero boundary conditions and, for almost every $r>0$, we have
\begin{equation}\label{eq:blow_down_limit_E}
\lim_{R\to\infty} E(w_R,r,0,0,A_{e/R}) = E(w,r,0,0,0).
\end{equation}
\end{lemma}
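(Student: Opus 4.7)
The plan is to apply a gauge transformation to remove the magnetic potential and then use weak $H^1$-compactness. For each $R>k_\eps$, fix a branch of the argument $\theta_{e/R}(y)$ so that $A_{e/R}=\nabla(\theta_{e/R}/2)$ off its cut; since $e/R\to 0\in\partial\R^2_+$, the cut can be chosen to lie outside any prescribed compact $K\subset\R^2_+$ for $R$ large enough. Set $v_R:=e^{-i\theta_{e/R}/2}w_R$, which by Lemma \ref{lemma:gauge_invariance} is real-valued up to the branch sign. The gauge identity $(i\nabla+A_{e/R})w_R=ie^{i\theta_{e/R}/2}\nabla v_R$ gives $|\nabla v_R|=|(i\nabla+A_{e/R})w_R|$ pointwise, so Lemma \ref{lemma:normboundblowdown} yields $\|v_R\|_{H^1(K)}\leq C(\eps,K)$. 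A diagonal extraction over a compact exhaustion of $\R^2_+$, via Banach-Alaoglu and Rellich-Kondrachov, produces $w\in H^1_{loc}(\R^2_+)$ with $v_R\rightharpoonup w$ weakly in $H^1_{loc}$ and strongly in $L^2_{loc}$ along a subsequence.

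Next I would pass to the limit in the equation. Since $(i\nabla+A_{e/R})^2w_R=0$ on $\R^2_+$, the gauge transformation yields $-\Delta v_R=0$ in $\R^2_+\setminus\{e/R\}$, with the pole being removable for the Laplacian because $v_R$ vanishes there with order $1/2$. For any test function $\phi\in C_c^\infty(\R^2_+)$, the compact support of $\phi$ has positive distance from $\partial\R^2_+$ and hence avoids $e/R$ and the branch cut for $R$ large, giving $\int v_R\Delta\phi\,\dy=0$; strong $L^2_{loc}$ convergence then yields $\int w\Delta\phi\,\dy=0$, so $-\Delta w=0$ in $\mathcal{D}'(\R^2_+)$, and Weyl's lemma promotes $w$ to a classical harmonic function. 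The boundary condition $w=0$ on $\{y_1=0\}$ follows from weak continuity of the trace applied to $v_R|_{\{y_1=0\}}=0$. For nontriviality, the change of variables $y=Rx$ gives
\[
H(w_R,1,0)=\frac{1}{R\,H(\psi,R,0)}\int_{\partial D_R^+(0)}|\psi|^2\,\dsigma=1,
\]
and compactness of the trace $H^1(D_1^+(0))\hookrightarrow L^2(\partial D_1^+(0))$ forces $\int_{\partial D_1^+(0)}|w|^2\,\dsigma=1$, so $w\not\equiv 0$.

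Finally, for the energy identity \eqref{eq:blow_down_limit_E}, the gauge identity rewrites both sides as Dirichlet energies, $E(w_R,r,0,0,A_{e/R})=\int_{D_r^+(0)}|\nabla v_R|^2\,\dy$ and $E(w,r,0,0,0)=\int_{D_r^+(0)}|\nabla w|^2\,\dy$. Harmonicity of $v_R$ off the pole combined with its $1/2$-order vanishing at $e/R$ yields
\[
\int_{D_r^+(0)}|\nabla v_R|^2\,\dy=\int_{\partial D_r^+(0)}v_R\,\partial_\nu v_R\,\dsigma
\]
after excising a shrinking disk around $e/R$ whose boundary contribution vanishes in the limit, and likewise for $w$. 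For $R>1/r$ the pole sits at distance $\sim r$ from $\partial D_r^+(0)$, so interior elliptic regularity promotes weak $H^1_{loc}$ convergence to $C^\infty$ convergence of $v_R$ on any smooth arc of $\partial D_r^+(0)$ avoiding the two corner points $\partial D_r^+(0)\cap\{y_1=0\}$, whence the boundary integrals converge. The hardest part will be to rule out energy concentration at these corners; this is achieved, for almost every $r$, by a Fubini-type argument on the uniformly bounded quantity $\int|\nabla v_R|^2\,\dy$, which is the source of the ``almost every $r$'' qualifier in the statement.
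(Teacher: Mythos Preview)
Your overall strategy matches the paper's: gauge away the potential, extract a weak $H^1$ limit, show harmonicity and nontriviality, then represent the energy as a boundary integral. The first three parts are correct. The gap is in the corner treatment for \eqref{eq:blow_down_limit_E}. A Fubini argument on the uniformly bounded quantity $\int|\nabla v_R|^2\,\dy$ gives at best, for a.e.\ $r$, that $\int_{\partial D_r^+(0)}|\nabla v_R|^2\,\dsigma$ is uniformly bounded along a subsequence; but boundedness is not convergence, and you still need to identify the limit of $\partial_\nu v_R$ on the arc near the corners, which your Fubini step does not supply. So the mechanism you name for the ``almost every $r$'' qualifier is not the one that actually closes the argument.

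The paper's fix is a Caccioppoli argument: on any half-annulus $\mathcal{A}_1\subset\mathcal{A}_2\subset\R^2_+$ at positive distance from the origin (so $e/R\notin\mathcal{A}_2$ for $R$ large), test $-\Delta(v_R-w)=0$ against $(v_R-w)\eta$ with $\eta$ a cutoff equal to $1$ on $\mathcal{A}_1$ and vanishing outside $\mathcal{A}_2$. Since $v_R-w=0$ on $\{y_1=0\}$ no flat-boundary terms appear, and one gets $\int_{\mathcal{A}_1}|\nabla(v_R-w)|^2\,\dy\to 0$, i.e.\ \emph{strong} $H^1$ convergence including the flat portion of $\partial\mathcal{A}_1$. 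Writing this in polar coordinates yields $\int_{\partial D_r^+(0)}\big(|\nabla(v_R-w)|^2+|v_R-w|^2\big)\,\dsigma\to 0$ for a.e.\ $r$ in the annulus, which is where the ``almost every'' actually originates. Alternatively, your interior-regularity route can be repaired without Fubini: since $v_R$ is harmonic with zero Dirichlet data on $\{y_1=0\}$ in a fixed neighbourhood of each corner $(0,\pm r)$ once $R$ is large, odd reflection across $\{y_1=0\}$ makes $v_R$ harmonic in a full disk around the corner, and interior estimates then give $C^\infty$ convergence up to the corners as well---this in fact yields the result for every $r>0$, not merely almost every.
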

\begin{proof}
Fix $r>0$. By \eqref{eq:H_1_A_a_characterization} and Lemma \ref{lemma:normboundblowdown}, there exists a constant $C(\eps,r)>0$ (not depending on $R$) such that
\begin{align*}
\| w_R \|_{H^1(D_{r}^+(0))} \leq C \| w_R \|_{H^1_{A_{e/R}}(D_r^+(0))} \leq C(\eps, r).
\end{align*}
In order to check that the constant $C$ in the previous inequality does not depend on the position of the singularity $e/R$, one can extend functions in $H^1_{A_{e/R}}(D_r^+(0))$ which vanish on $\{x_1=0\}$ trivially to functions belonging to $H^1_{A_{e/R}}(D_r(0))$, and then proceed as in the proof of \cite[Lemma 7.4]{MOR}.
Hence there exists $\tilde{w} \in H^1(D_r^+(0))$ such that $w_R \rightharpoonup \tilde{w} $ in $H^1(D_r^+(0))$ and $w_R \rightarrow \tilde{w} $ in $L^2(D_r^+(0))$, as $R \to + \infty$. Since $H(w_R,1,0)=1$ for every $R$, the trace embeddings provide $\tilde{w}\not\equiv0$. 

Let $w=e^{-i\theta_{0}/2}\tilde{w}$. In order to prove that $w$ is harmonic, notice first that $(i \nabla + A_{e/R})^2 w_R = 0$ in $\R^2_+$ for every $R$. Given a test function $\phi\in C_0^{\infty}(D_r^+(0))$, let $R$ be so large that $e/R\not\in \text{supp}\{\phi\}$. Consequently we have
\begin{equation}\label{eq:blow_up_harmonic_away0}
-\Delta(e^{-i\theta_{e/R}/2} w_R)=0
\end{equation}
in $\text{supp}\{\phi\}$. This implies, using the weak convergence,
\[
0 = \int_{D_r^+(0)}\nabla (e^{-i\theta_{e/R}/2} w_R) \cdot \nabla \phi \dx \rightarrow \int_{D_r^+(0)} \nabla w \cdot \nabla \phi \dx \quad\text{ as } R\to\infty,
\]
so that $w$ is harmonic in $D_r^+(0)$.

To prove the last part of the statement, fix two concentric semi-annuli $\mathcal{A}_1\subset \mathcal{A}_2$, centered at the origin and having positive distance from it. Let $\eta$ be a cut-off function which is 1 in $\mathcal{A}_1$ and vanishes outside $\mathcal{A}_2$. For $R$ sufficiently large, we have that \eqref{eq:blow_up_harmonic_away0} holds in $\mathcal{A}_2$. By testing the equation satisfied by $e^{-i\theta_{e/R}/2} w_R-w$ by $(e^{-i\theta_{e/R}/2} w_R-w)\eta$ in $\mathcal{A}_2$, we obtain
\[
\begin{split}
\int_{\mathcal{A}_1} |\nabla(e^{-i\theta_{e/R}/2} w_R-w)|^2 \dx
\leq \left|\int_{\mathcal{A}_2} \nabla(e^{-i\theta_{e/R}/2} w_R-w)\nabla\eta(e^{-i\theta_{e/R}/2} w_R-w) \dx  \right| \to 0,
\end{split}
\]
which tends to 0 as $R\to\infty$ by the weak convergence. This implies that
\begin{equation}\label{eq:convergence_boundary_D_rho}
\int_{\partial D_{\rho}^+(0)}\left( | \nabla (e^{-i\theta_{e/R}/2} w_R - w)|^2 + |e^{-i\theta_{e/R}/2} w_R - w |^2 \right)\dsigma \rightarrow 0,
\end{equation}
for almost every $\rho$ such that $\partial D_\rho^+(0)\subset\mathcal{A}_1$, as $R \to + \infty$. 

Finally, we use integration by parts as follows (the second equality is well defined provided for $R>1/r$)
\[
\begin{split}
|E(w_R,r,0,0,A_{e/R}) - E(w,r,0,0,0)|
\leq \int_{\partial D_r^+(0)} \left| -i(i\nabla+A_{e/R})w_R\cdot\nu\bar{w}_R - \nabla w\cdot\nu w \right|\dsigma \\
= \int_{\partial D_r^+(0)} \left| \nabla(e^{-i\theta_{e/R}/2} w_R)\cdot\nu \overline{e^{-i\theta_{e/R}/2} w_R} -\nabla w\cdot\nu w  \right|\dsigma \to 0,
\end{split}
\]
where the convergence to 0 comes from \eqref{eq:convergence_boundary_D_rho} for almost every $r > 0$.
\end{proof}

\begin{proof}[End of the proof of theorem \ref{theorem:blow_up_almgren}]
By combining \eqref{eq:blow_down_limit_E} and Lemma \ref{lemma:d_limit} we obtain, for almost every $r>0$,
\[
N(w,r,0,0,0)=\lim_{R\to\infty} N(w_R,r,0,0,A_{e/R}) = \lim_{R\to\infty} N(\psi,rR,0,0,A_e)=d
\]
(recall that $\psi$ was introduced in Lemma \ref{lemma:weakconvergence}). Since $N(w,\cdot,0,0,0)$ is continuous, it is constant. Since we proved in the previous lemma that $w$ is harmonic with zero boundary conditions on $\{x_1=0\}$, we deduce from standard arguments (see for example \cite[Proposition 3.9]{NTTV2010}) that $w(r,\theta)=Cr^d\cos(d\theta)$, for some $d\in\N$ odd. Comparing with \eqref{eq:boundNpsi}, taking for example $\eps=1/2$, we conclude that $d=1$. In conclusion, by Proposition \ref{prop:unique_limit_profile}, $\psi$ solves \eqref{eq:limit_profile}-\eqref{eq:limit_profile_normalization}. Then, $\psi = C e^{i \theta_e/2} \left( r \cos \theta - \frac{\beta}{\pi} \frac{\cos \theta}{r} + O(r^{-3}) \right)$ for $r > 1$. Moreover, since $H(\psi, k_\eps, 0) = 1$, the constant $C$ is given by
\begin{align} \label{eq:constante_normalisation}
C^2 = \frac{1}{\frac{k_\eps^2 \pi}{2} - \beta + O(\frac{1}{k_\eps^2})}.
\end{align}

\end{proof}

\subsection{Proof of Theorem \ref{theorem:no_nodal_lines}}
\label{subsec:theorem_no_nodal_lines}
We can assume without loss of generality that $b=0$ and moreover, by Lemma \ref{lemma:Riemannmapping}, that $\Omega$ satisfies \eqref{eq:rectified_domain}.
Let $\varphi_k$ have a zero of order $1$ at $0\in\partial\Omega$, meaning that there are no nodal lines of $\varphi_k$ ending at $0$. 
Let $K > \sqrt{\beta/\pi}$ large be such that the statement of Theorem \ref{theorem:blow_up_almgren} holds.
We proceed similarly to the proof of Theorem \ref{theorem:at_least_one_nodal_line}.

For $i=1,\ldots,k$ let
\[
v_{i}^{ext} = e^{-i \frac{\theta_{a}}{2}} \varphi_{i}^{a} \quad \text{ in } \Omega\setminus D_{Ka_1}(\pi(a)).
\]
For $a_1$ sufficiently small, $v_i^{int}$ is defined as the unique function which achieves
\[
\inf\left\{ \int_{D_{Ka_1}^+(\pi(a))} \left( |\nabla v|^2-\lambda_i^a p(x) |v|^2 \right)\dx : \ v\in H^1(D_{Ka_1}^+(\pi(a))), \ v=v_i^{ext} \text{ on } \partial D_{Ka_1}^+(\pi(a)) \right\}.
\]
We let $v_i=v_i^{int}$ in $D_{Ka_1}^+(\pi(a))$, $v_i=v_i^{ext}$ in $\Omega\setminus D_{Ka_1}(\pi(a))$. 
Notice that estimate \eqref{eq:v_i_int_estimates} holds in this case for every $1\leq i\leq k$ since $\varphi_k$ has no nodal line at $0$.
We take
\begin{align*}
F_{k} = \left\{ \Phi = \sum_{i=1}^{k} \alpha_{i} v_{i}: \, \alpha=(\alpha_1,\ldots,\alpha_k) \in \R^k \right\} \subset H^1_0(\Omega),
\end{align*}
so that
\begin{equation}\label{eq:lambda_max_M_N}
\lambda_k\leq \sup_{\Phi\in F_k} \frac{\|\nabla \Phi\|^2_{L^2(\Omega)}}{\int_{\Omega} p(x)|\Phi|^2 \mathrm{d}x}
= \sup_{\alpha\in\R^k} \frac{\alpha^T M \alpha}{\alpha^T N \alpha}
= \lambda_{max}(N^{-1}M).
\end{equation}
Here $\lambda_{max}(\cdot)$ is the largest eigenvalue of a matrix and $M,N$ are $k\times k$ matrices with entries
\[
\begin{split}
m_{ij}= \int_\Omega \nabla v_i\cdot\nabla v_j\dx 
=\lambda_i^a \int_{\Omega} p(x) v_i v_j\dx + \int_{\partial D_{Ka_1}^+(\pi(a))} \nabla(v_i^{int}-v_i^{ext})\cdot\nu v_j \dsigma, \\
n_{ij}=\int_\Omega p(x) v_i v_j \dx.
\end{split}
\]

Let us estimate $m_{k,k}$. We perform the following change of variables in order to work with the function $\psi_k^a$ defined in \eqref{eq:blowup}
\[
f_k^{a,ext}(y)=e^{-i\theta_e/2} \psi_k^a(y)
=\frac{v_k^{ext}(a_1 y + \pi(a))}{\sqrt{H(\varphi_k^a,Ka_1, \pi(a))}}, \quad
f_k^{a,int}(y)=\frac{v_k^{int}(a_1 y + \pi(a))}{\sqrt{H(\varphi_k^a,Ka_1, \pi(a))}}.
\]
By Theorem \ref{theorem:blow_up_almgren} we have that $f_k^{a,ext}\to e^{-i\theta_e/2} \psi_k$ in $H^1(D_K^+(0))$ as $a\to0$. Moreover, we have $f_k^{a,int}\to f_k^{int}$ in $H^1(D_K^+(0))$ as $a\to0$ and $-\Delta f_k^{int}=0$ in $D_K^+(0)$, $f_K^{int}=e^{-i\theta_e/2} \psi_k$ on $\partial D_K^+(0)$. From Proposition \ref{prop:unique_limit_profile} (ii), we deduce the following behavior of the harmonic extension $f_k^{int}$
\[
f_k^{int}(r,\theta)= C \left( 1 - \frac{ \beta}{\pi K^2} \right) r \cos\theta + C \sum_{\substack{n \geq 3 \\ n \text{ odd}}} \frac{b_n}{K^{2n}} \cos (n\theta) \, r^n, \quad r < K ,
\]
for $b_n$ as in Proposition \ref{prop:unique_limit_profile}, (ii), and $C$ given by \eqref{eq:constante_normalisation} with $K = k_\eps$.
Therefore we have
\[
\begin{split}
\int_{\partial D_{Ka_1}^+(\pi(a))} \nabla(v_k^{int}-v_k^{ext})\cdot\nu v_k \dsigma
=H(\varphi_k^a,Ka_1, \pi(a))\,  \int_{\partial D_K^+(0)} \nabla(f_k^{a,int}-f_k^{a,ext}) \cdot\nu f_k^{ext} \dsigma \\
= H(\varphi_k^a,Ka_1, \pi(a)) \, C^2 \, \left\{ -\beta +  O(K^{-2}) + o_{a_1}(1) \right\} = -C_k a_1^2
\end{split}
\]
for some $C_k>0$ as soon as $K$ is sufficiently large and $a_1$ is sufficiently small, where in the last step we used Lemma \ref{lemma:H_ka}.

We deduce that the matrices $M$ and $N$ appearing in \eqref{eq:lambda_max_M_N} have the following form
\begin{align*}
M =
\begin{pmatrix}
\lambda_{1}^{a}+O(a_1^{2})&         & O(a_1^{2})                       &  \\
                            &\ddots   &                                & O(a_1^{2}) \\
                  O(a_1^{2})  &         &\lambda_{k-1}^{a}+O(a_1^{2})&   \\
                            & O(a_1^{2})&                                &\lambda_{k}^{a}-C_{k}a_1^{2}+o(a_1^2)
\end{pmatrix}
\end{align*}
\begin{align*}
N =
\begin{pmatrix}
1 + O(a_1^{4}) &        &  O( a_1^{4})    &     \\
                 & \ddots &               &     \\
     O( a_1^{4})   &        &               &     \\
                 &        &  1 + O(a_1^{4}) &  
\end{pmatrix}.
\end{align*}
Since $\lambda_{k}$ is simple, proceeding similarly to Lemma \ref{lemma:greatesteigenvalue2}, we obtain 
\begin{align}\label{eq:lambda_k_less_lambda_k_a}
\lambda_{k} \leq \lambda_{k}^{a} - C_{k} a_1^{2} + o(a_1^2),
\end{align}
which concludes the proof.
Indeed, $N^{-1}M$ has the same form as $M$. When looking for the eigenvalues of this matrix we search the $t$ such that
\[
(\lambda_k^a-C_k a_1^2-t) Q_{k-1}(t,a_1^2)+a_1^4 Q_{k-2}(t,a_1^2)=0,
\]
where 
\[
Q_{k-1}(t,a_1^2)=\prod_{i=1}^{k-1} (\lambda_i^a+O(a_1^2)-t)
\]
and $Q_{k-2}(t,a_1^2)$ is a polynomial of degree $k-2$ in the variable $t$, which depends on $a_1$ with terms of order $O(a_1^2)$.
We set $\eps=a_1^2$ and we apply the implicit function theorem to
\[
f(\eps,x,t)=(x-t)Q_{k-1}(t,\eps)+\eps^2 Q_{k-2}(t,\eps)
\]
at the point $(0,\lambda_k^a-C_ka_1^2,\lambda_k^a-C_ka_1^2)$. We see that at any point $(0,\bar{x},\bar{x})$ we have
\[
\frac{\partial f}{\partial t}(0,\bar{x},\bar{x})=-Q_{k-1}(\bar{x},0)
=-\prod_{i=1}^{k-1} (\lambda_i-\bar{x}),
\]
so that the implicit function theorem applies at any point $(0,\bar{x},\bar{x})$ such that $\bar{x}\neq \lambda_i$ for every $i=1,\ldots,k-1$, and we have $t(\eps,\bar{x})=\bar{x}+o(\eps)$ in a neighborhood of $(\eps,x)=(0,\bar{x})$. Taking $\bar{x}=\lambda_k^a-C_ka_1^2$ we obtain \eqref{eq:lambda_k_less_lambda_k_a}.

\appendix

\section{Domains with conical singularities}\label{appendix:cone}

The proofs of the main theorems can be partially adapted to the case when $\Omega$ presents isolated conical singularities, as in the numerical simulations which appear in the Introduction. The results are qualitatively the same as for the smooth domain, but the rate of convergence of the eigenvalues depends on the aperture of the cone. We can interpret this fact in the following way: the zero boundary conditions on an acute angle of $\partial\Omega$ play the same role as the nodal lines of the eigenfunction. The tighter is the angle, the faster is the convergence.

Consider the following conical domain of aperture $\alpha$, for some $0<\alpha<\pi$
\begin{equation}\label{eq:cone}
\Omega=\left\{ (r,\theta): \, r\in (0,1), \, \theta\in \left(-\frac{\alpha}{2},\frac{\alpha}{2}\right) \right\}.
\end{equation}

The counterpart of Theorem \ref{theorem:at_least_one_nodal_line} holds.

\begin{theorem}\label{theorem:cone_at_least_one_nodal_line}
Let $\Omega$ be as in \eqref{eq:cone} and let $p$ satisfy \eqref{eq:weight_assumptions}.
Suppose that $\lambda_{k-1}<\lambda_{k}$ and that there exists an eigenfunction $\varphi_{k}$ associated to $\lambda_k$ having a zero of order $h/2\geq 2$ at the origin. 
Then there exists $C > 0$, not depending on $a$, such that
\begin{align*}
\lambda_{k}^{a} \leq \lambda_{k} - C |a|^{ h \frac{\pi}{\alpha} } \quad \text{for } 
a\to 0 \text{ along a nodal line of } \varphi_k.
\end{align*}
\end{theorem}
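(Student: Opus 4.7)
We mimic the min-max strategy used for Theorem~\ref{theorem:at_least_one_nodal_line}: we build a $k$-dimensional space $F_k\subset\mathcal D^{1,2}_{A_a}(\Omega)$ of competitor functions and use it to bound $\lambda_k^a$ from above. The essential new point is that the natural conformal straightening of the cone, $z\mapsto z^{\pi/\alpha}$, is not $C^\infty$ at the vertex, so Lemma~\ref{lemma:Riemannmapping} does not apply. Instead, we work directly on $\Omega$ and exploit the scale invariance of the cone in the rescaling argument. By separation of variables in the cone with Dirichlet data, an eigenfunction $\varphi_k$ with a zero of order $h/2$ at $0$ satisfies
\[
\varphi_k(r,\theta)=r^{\,h\pi/(2\alpha)}\Theta_h(\theta)+o(r^{\,h\pi/(2\alpha)})\qquad\text{as }r\to 0,
\]
where $\Theta_h$ is the $(h/2)$-th Dirichlet eigenfunction of $-\partial_\theta^2$ on $(-\alpha/2,\alpha/2)$; its zeroes give the $h/2-1$ nodal lines of $\varphi_k$ ending at $0$, which bisect the cone into $h/2$ equal sectors. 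When $\alpha=\pi$ we recover the exponent $h/2$ of the smooth case.

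Fix a nodal line $\Gamma$ ending at $0$, take $a\in\Gamma$ with $|a|$ small, and place the cut of $\theta_a$ on the segment in $\Gamma$ joining $a$ with $0$ so that $\nabla\theta_a/2=A_a$ in $\Omega\setminus D_{|a|}(0)$. Define competitor functions $v_i$ as in~\eqref{eq:v_ext_def}--\eqref{eq:v_i_def}: $v_i^{ext}=e^{i\theta_a/2}\varphi_i$ outside $D^+_{2|a|}(0):=D_{2|a|}(0)\cap\Omega$ and $v_i^{int}$ the minimizer of the magnetic energy~\eqref{eq:v_int_def} on $D^+_{2|a|}(0)$ with the same boundary trace on $\partial D_{2|a|}(0)\cap\Omega$ and Dirichlet data on the two sides of the cone. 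Using the $r^{h\pi/(2\alpha)}$ asymptotics, we replace Lemma~\ref{lemma:v_i_int_estimates} by its cone analogue: rescaling $y=x/|a|$ leaves the cone invariant and moves the singularity to the fixed point $a/|a|\in\Gamma\cap\partial D_1(0)\not\subset\{\text{sides of the cone}\}$; passing to the double cover via Lemma~\ref{lemma:gauge_invariance} and proceeding as in \cite[Lemma~6.1]{BonnaillieNorisNysTerracini2013} yields
\[
\|v_k^{int}\|_{L^2(\partial D^+_{2|a|}(0))}\sim |a|^{\frac12+\frac{h\pi}{2\alpha}},\qquad
\Bigl|\int_{\partial D^+_{2|a|}(0)}(i\nabla+A_a)v_k^{int}\!\cdot\!\nu\,\overline{v_k^{int}}\,\mathrm d\sigma\Bigr|\sim |a|^{\,h\pi/\alpha},
\]
with the weaker estimates~\eqref{eq:v_i_int_estimates} for $i<k$. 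Exactly as in Step~3 of the proof of Theorem~\ref{theorem:at_least_one_nodal_line}, continuity of the extension of $v_k^{ext}$ across $\Gamma$ (ensured by $\varphi_k|_\Gamma=0$) combined with the minimality~\eqref{eq:v_int_def} yields
\[
i\int_{\partial D^+_{2|a|}(0)}(i\nabla+A_a)(v_k^{ext}-v_k^{int})\!\cdot\!\nu\,\overline{v_k}\,\mathrm d\sigma = -C_k|a|^{\,h\pi/\alpha}+o(|a|^{\,h\pi/\alpha}),\qquad C_k>0.
\]
The matrices $M,N$ associated with $F_k=\mathrm{span}\{v_1,\dots,v_k\}$ via~\eqref{eq:v_i_equation} then have the structure of Lemma~\ref{lemma:greatesteigenvalue2} with the critical exponent $2n+2$ replaced by $h\pi/\alpha$; the hypothesis $\lambda_{k-1}<\lambda_k$ still guarantees $Q_{k-1}(\lambda_k,0)\neq 0$, so the same implicit function argument gives $\lambda_{\max}(N^{-1}M)\leq \lambda_k-C_k|a|^{\,h\pi/\alpha}+o(|a|^{\,h\pi/\alpha})$, which is the desired bound.

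\textbf{Main obstacle.} The hardest step is the cone analogue of Lemma~\ref{lemma:v_i_int_estimates}: in the smooth case one uses a family of conformal maps $\Phi_a$ that fixes the singularity and depends smoothly on $a$, but at the conical vertex no such smooth family exists. The substitute is the scale invariance of $\Omega$: rescaling by $|a|$ gives a problem on a fixed domain with a fixed pole, after which Lemma~\ref{lemma:gauge_invariance} reduces matters to a real elliptic problem on a fixed double cover. One must verify that the boundary trace of $v_k^{int}$ after rescaling and normalization converges to a non-degenerate limit trace and that the corresponding solution has nontrivial outward normal derivative at $\partial D_2^+(0)$, uniformly as $|a|\to 0$; this is where the precise exponent $h\pi/\alpha$ is fixed.
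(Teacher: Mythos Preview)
Your proposal is correct but takes a different route from the paper. The paper does not work directly in the cone: it applies the conformal map $z\mapsto z^{\pi/\alpha}$, which sends $\Omega$ to the half-disk $D_1^+(0)$ and transforms the equation into one with a singular weight $|x|^{-2+2\alpha/\pi}$ of Kato class. In these coordinates $\varphi_k$ has a zero of order $h/2$ at the origin in the usual sense, the pole sits at $a'=a^{\pi/\alpha}$, and the proof of Theorem~\ref{theorem:at_least_one_nodal_line} carries over verbatim once one checks that the Poincar\'e-type estimates survive the singular weight (this is the role of the Hardy inequality~\eqref{eq:hardy_generalized}). The exponent $h\pi/\alpha$ then appears simply as $|a'|^h=|a|^{h\pi/\alpha}$.

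Your approach instead stays in the cone and uses its scale invariance: rescaling by $|a|$ maps the conical sector to itself, so the rescaled interior problem lives on a fixed domain with the pole converging to a fixed interior point $e$, and the exponent is read off directly from the $r^{h\pi/(2\alpha)}$ asymptotics of $\varphi_k$. This is arguably more transparent and avoids the singular weight altogether. The trade-off is that you must redo the a~priori estimates (the analogues of Lemma~\ref{lemma:v_i_int_estimates} and of~\eqref{eq:v_i_int_estimates}) on conical sectors rather than half-disks, whereas the paper recycles the half-disk machinery wholesale. One small point to watch: Lemma~\ref{lemma:greatesteigenvalue2} is phrased with integer exponents and differentiates $n+1$ times in $\varepsilon=|a|^2$; in your setting the natural small parameter is $\varepsilon=|a|^{\pi/\alpha}$ (equivalently the paper's $|a'|$), and with that choice the matrix entries have exactly the structure required with $2n+2=h$, so the lemma applies without modification.
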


As for the analogous of Theorem \ref{theorem:no_nodal_lines}, we can prescribe the behavior of the eigenvalues only in case the pole approaches the vertex of the cone along the angle bisector. This restriction is related to the open problem presented in Remark \ref{rem:three_remarks} (i).

\begin{theorem}\label{theorem:cone_no_nodal_lines}
Let $\Omega$ be as in \eqref{eq:cone} and let $p$ satisfy \eqref{eq:weight_assumptions}.
Suppose that $\lambda_{k}$ is simple and that $\varphi_{k}$ has a zero of order $1$ at the origin. Then there exists $C > 0$, not depending on $a$, such that
\begin{align*}
\lambda_{k}^a \geq \lambda_{k} + C a_1^{ 2 \frac{\pi}{\alpha} } \quad \text{for } 
a=(a_1,0), \ a_1\to 0.
\end{align*}
\end{theorem}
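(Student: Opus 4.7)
The plan is to reduce Theorem \ref{theorem:cone_no_nodal_lines} to the smooth-boundary case (Theorem \ref{theorem:no_nodal_lines}) via a conformal rectification of the cone, and then to adapt the variational argument of Section \ref{subsec:theorem_no_nodal_lines} to the resulting problem with a degenerate weight.

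First, I would introduce the conformal map $\Phi(w)=w^{\alpha/\pi}$, which sends a neighborhood of the origin in the right half-plane $\{w_1>0\}$ onto a neighborhood of the vertex in $\Omega$ and $\{w_1=0\}$ onto the two sides of $\partial\Omega$. The pole $a=(a_1,0)$ lifts to $a'=\Phi^{-1}(a)=(a_1^{\pi/\alpha},0)$, which lies on the positive real axis at distance exactly $a'_1=a_1^{\pi/\alpha}$ from the flat boundary. Following the argument of Lemma \ref{lemma:Riemannmapping} (which still applies since $\Phi$ is conformal on the punctured half-plane), the eigenvalue problem \eqref{eq:main_with_weight} transforms into an analogous magnetic eigenvalue problem on the half-plane, with modified weight
\[
p'(y)=\left(\frac{\alpha}{\pi}\right)^{2}|y|^{2(\alpha/\pi-1)}\,p(\Phi(y)),
\]
which is smooth and positive away from $y=0$ but carries an integrable algebraic singularity at the vertex.

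Next, I would verify that the Almgren frequency machinery of Sections \ref{sec:freq_formula}--\ref{sec:pole_approaching_not_nodal}, the uniqueness of the limit profile (Proposition \ref{prop:unique_limit_profile}) and the blow-up result (Theorem \ref{theorem:blow_up_almgren}) all survive this generalization. Three features ensure this: $(i)$ the limit equation \eqref{eq:limit_profile} is weight-free, so Proposition \ref{prop:unique_limit_profile} applies unchanged; $(ii)$ in the rescaled equation for the blow-up of $\varphi_k^{a}\circ\Phi$ at scale $a'_1$, the weight enters multiplied by $(a'_1)^{2}$, producing a term that vanishes in the limit $a'_1\to 0$; $(iii)$ in the Pohozaev identity of Lemma \ref{lemma:pohozaev} and the $E,H,N$-estimates, the weight appears only through $p'$ and $\nabla p'\cdot(y-\pi(a'))$, whose singularities of order $|y|^{2(\alpha/\pi-1)}$ are locally integrable for $\alpha\in(0,\pi)$. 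Consequently, the blow-up sequence still converges to the unique profile $\psi$ of Proposition \ref{prop:unique_limit_profile}.

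With these tools available, I would reproduce the variational argument of Section \ref{subsec:theorem_no_nodal_lines}: construct competitors $v_i^{ext}=e^{-i\theta_{a'}/2}\varphi_i^{a'}$ outside the half-disk $D^+_{Ka'_1}(0)$ and $v_i^{int}$ the weighted harmonic extension inside, assemble the test space $F_k$, and estimate the matrix entries in the min-max quotient. The crucial boundary flux $\int_{\partial D^+_{Ka'_1}(0)}\nabla(v_k^{int}-v_k^{ext})\cdot\nu\,\bar v_k\,d\sigma$ becomes of order $-(a'_1)^{2}=-a_1^{2\pi/\alpha}$ from the asymptotic description of the blow-up limit; inserting this into an adapted version of the matrix eigenvalue estimate (Lemma \ref{lemma:greatesteigenvalue2}) yields $\lambda_k\leq \lambda_k^{a}-Ca_1^{2\pi/\alpha}+o(a_1^{2\pi/\alpha})$, which is equivalent to the stated bound. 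The main obstacle is the lack of smoothness of $p'$ at the vertex, which forces a careful recheck of every step of the frequency analysis; in particular, the uniform constants in the $N$-monotonicity estimate (Lemma \ref{lemma:boundN1}) and the uniform $L^2$-bound on the Pohozaev remainder $M_{a'}$ must be re-established with the singular weight.
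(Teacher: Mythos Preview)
Your proposal is correct and follows essentially the same route as the paper: conformal rectification of the cone by $w\mapsto w^{\pi/\alpha}$, transfer of the pole to $a'_1=a_1^{\pi/\alpha}$, and reduction to the smooth-boundary argument of Section \ref{subsec:theorem_no_nodal_lines} with the degenerate weight $(\alpha/\pi)^2|y|^{-(2-2\alpha/\pi)}p(\Phi(y))$. The paper singles out one ingredient you leave implicit: the Hardy inequality \eqref{eq:hardy_generalized} for Kato-class potentials is what replaces the Poincar\'e inequality \eqref{eq:Poincaré} and drives all the $E$, $H$, $N$ estimates through with the singular weight.
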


The strategy of proof consists in applying the conformal map $x^\frac{\pi}{\alpha}$, so that the conical domain is transformed into the regular half ball $D_1^+(0)$. We end up with a singular equation of the following type
\begin{equation*}
(i\nabla+A_a)^2 \varphi_k^a = \lambda_k^a \left(\frac{\alpha}{\pi}\right)^2 
\frac{p(x)}{|x|^{2-\frac{2\alpha}{\pi}}} \varphi_k^a \quad \text{ in } D_1^+(0).
\end{equation*}
The singular potential $|x|^{-2+\frac{2\alpha}{\pi}}$ belongs to the Kato class, which allows to adapt the proofs of the previous sections. In particular, the following Hardy inequality holds:  for every $\eps>0$ there exists a positive constant $C$ such that
\begin{equation}\label{eq:hardy_generalized}
\frac{C}{r^\eps} \int_{D_r(0)} \frac{|u|^2}{|x|^{2-\eps}} \dx
\leq \int_{D_r(0)} |\nabla |u||^2 \dx + \frac{1}{r}\int_{\partial D_r(0)} |u|^2 \dsigma,
\end{equation}
for every $u\in H^1(D_r(0),\C)$ and for every $r>0$ (see \cite{WangZhu2003}). By combining with the diamagnetic inequality
\[
\int_{D_r(0)} |\nabla |u||^2 \dx \leq \int_{D_r(0)} |(i\nabla +A_a) u |^2 \dx,
\]
we obtain the counterpart of the Poincaré inequality \eqref{eq:Poincaré}.

Concerning Proposition \ref{proposition:asymptotic_laplacian_eigenfunctions}, its validity in case of a singular potential belonging to the Kato class is stated in \cite[Theorem 1.3]{FFT2011}.

\section{Green's function for a perturbation of the Laplacian}\label{appendx:green}

\begin{lemma}\label{lemma:appendix_green}
Consider the set of equations (depending on the parameter $\eps$) $-\Delta f=\eps c(x)f$ in $\Omega\subset\R^2$ bounded, with $c\in L^\infty(\Omega)$. For $\eps$ sufficiently small there exists a Green's function $G(x,y)$ such that the following representation formula holds for $x\in\Omega$
\[
f(x)=-\int_{\partial\Omega} f\partial_{\nu}G(x,\cdot)\dsigma(y).
\]
Moreover, for every $1\leq p<\infty$ there exists $C$ independent from $\eps$ such that we have
\[
\|\partial_{x_i}G(x,\cdot)-\partial_{x_i}\Phi(x,\cdot)\|_{W^{1,p}(\partial\Omega)}\leq C\eps,
\]
for $x\in\Omega$, where $\Phi$ is the Green function of the Laplacian with homogeneous Dirichlet boundary conditions in $\Omega$.
\end{lemma}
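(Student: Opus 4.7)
I plan to prove the lemma in three steps: construct $G$ via functional analysis, derive the representation formula by Green's second identity, and finally estimate $G - \Phi$ through the equation it solves, using elliptic regularity together with trace theorems.

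\emph{Existence and representation formula.} For $\eps < \lambda_1(\Omega)/\|c\|_{L^\infty}$, where $\lambda_1$ is the first Dirichlet eigenvalue of $-\Delta$, the bilinear form $B_\eps(u,v) := \int_\Omega \nabla u\cdot\nabla v\dx - \eps\int_\Omega c\, uv\dx$ associated with $L_\eps := -\Delta - \eps c$ is coercive on $H^1_0(\Omega)$ by the Poincaré inequality. Lax--Milgram then yields a uniform isomorphism $L_\eps : H^1_0(\Omega)\to H^{-1}(\Omega)$, from which a Stampacchia-type construction produces $G(x,\cdot)\in H^1_0(\Omega)$ with $L_\eps G(x,\cdot) = \delta_x$ distributionally. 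The representation formula follows by fixing $x \in \Omega$, applying Green's second identity to $f$ and $G(x,\cdot)$ on $\Omega\setminus D_\delta(x)$, and letting $\delta\to 0$: the $\eps c$ perturbation cancels between the two terms (both factors are multiplied by it), the integral over $\partial D_\delta(x)$ isolates $-f(x)$ by the logarithmic behaviour of $G$, and only $\int_{\partial\Omega} f\,\partial_\nu G\dsigma$ survives on $\partial\Omega$ since $G(x,\cdot)=0$ there.

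\emph{Equation for $R$ and its $x$-derivatives.} Let $R(x,y) := G(x,y) - \Phi(x,y)$. Subtracting $L_\eps G(x,\cdot) = \delta_x$ from $-\Delta\Phi(x,\cdot) = \delta_x$ (after rewriting the latter as $L_\eps \Phi(x,\cdot) = \delta_x - \eps c\Phi(x,\cdot)$) gives $L_\eps R(x,\cdot) = \eps\, c\, \Phi(x,\cdot)$ in $\Omega$ with $R(x,\cdot)=0$ on $\partial\Omega$. Differentiating in $x_i$, which is legitimate by the smooth parameter dependence of the variational solution, the function $R_i := \partial_{x_i} R$ still vanishes on $\partial\Omega$ and satisfies
\[
L_\eps R_i(x,\cdot) = \eps\, c\, \partial_{x_i}\Phi(x,\cdot) \quad\text{in }\Omega.
\]
The key observation is that $\partial_{x_i}\Phi(x,y)\sim |x-y|^{-1}$ near $y=x$, so the right-hand side lies in $L^q(\Omega)$ for every $q<2$, with norm bounded uniformly for $x$ in a compact subset of $\Omega$. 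The uniform invertibility of $L_\eps$ combined with Calderón--Zygmund estimates yields
\[
\|R_i(x,\cdot)\|_{W^{2,q}(\Omega)} \leq C_x\, \eps \qquad\text{for every }q\in[1,2).
\]

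\emph{Main obstacle: passing to the boundary.} The delicate final step is converting the interior $W^{2,q}$ bound (restricted to $q<2$) into the required $W^{1,p}(\partial\Omega)$ estimate for every $p<\infty$. The plan is to apply the trace theorem, which places $\nabla R_i$ on $\partial\Omega$ in the fractional space $W^{1-1/q,q}(\partial\Omega)$, and then to exploit the one-dimensionality of $\partial\Omega$ via the 1D Sobolev embedding $W^{1-1/q,q}\hookrightarrow L^p(\partial\Omega)$, which is available whenever $p\leq q/(2-q)$; taking $q\to 2^-$ extends this admissible range to every $p<\infty$. Since $R_i|_{\partial\Omega}\equiv 0$, the tangential components of the trace carry no information, so the stated $W^{1,p}(\partial\Omega)$ estimate is naturally understood as controlling the normal derivative $\partial_\nu \partial_{x_i} R$, which is precisely the quantity actually invoked in the proof of Lemma \ref{lemma:M_a_bound}. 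The technical heart of the argument is this exponent matching between the two-dimensional interior regularity ($q<2$, dictated by the $|x-y|^{-1}$ singularity of $\partial_{x_i}\Phi$) and the one-dimensional boundary Sobolev embedding, together with carefully tracking the $\eps$-factor through each elliptic/trace step.
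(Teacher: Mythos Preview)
Your approach is essentially the paper's: both derive the equation $L_\eps R_i = \eps\, c\,\partial_{x_i}\Phi$ with zero Dirichlet data for $R_i = \partial_{x_i}(G-\Phi)$, and exploit $\partial_{x_i}\Phi \in L^q(\Omega)$ for $q<2$ to obtain $\|R_i\|_{W^{2,q}(\Omega)} \le C\eps$. The only differences are cosmetic (you construct $G$ abstractly via Lax--Milgram whereas the paper writes $G=\Gamma+L$ explicitly), plus you add a final trace/embedding step to reach the boundary norm as stated; the paper's own proof in fact stops at the interior estimate \eqref{eq:estimate_W_2_q}, which is what is actually invoked in Lemma~\ref{lemma:M_a_bound}.
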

\begin{proof}
We define the Green function as $G(x,y)=\Gamma(y-x)+L(x,y)$, where $\Gamma(x)=-\frac{1}{2\pi}\log|x|$ is the fundamental solution of the Laplacian in $\R^2$ and $L(x,\cdot)$ solves, for $x\in\Omega$,
\[
\left\{\begin{array}{ll}
-\Delta L(x,y)-\eps c(y) L(x,y)=\eps c(y) \Gamma(y-x) &\quad y\in \Omega \\
L(x,y)=-\Gamma(y-x)&\quad y\in \partial\Omega.
\end{array}\right.
\]
Notice that this equation admits a solution for $\eps$ small because the quadratic form
\begin{equation}\label{eq:quadratic_form_positive}
\int_\Omega(|\nabla v|^2-\eps c(x)v^2)\dx
\end{equation}
is coercive for $v\in H^1_0(\Omega)$, and moreover $\Gamma \in L^2(\Omega)$.

The validity of the representation formula is standard. Indeed, the following identity holds (see for example \cite{EvansBook}, equation (25) in paragraph 2.2.4)
\[
f(x)=-\int_\Omega \Gamma(y-x)\Delta f(y)\,\textrm{d}y +\int_{\partial\Omega}(\Gamma(y-x)\partial_{\nu}f(y)- f(y)\partial_{\nu}\Gamma(y-x))\,\textrm{d}\sigma(y).
\]
By using the Green formula
\[
\int_{\Omega} (\Delta L \,f-L\Delta f)\,\textrm{d}y =
\int_{\partial\Omega}(\partial_{\nu}L\, f-L\partial_{\nu}\,f)\,\textrm{d}\sigma(y)
\]
and the equation satisfied by $L(x,\cdot)$, we obtain the representation formula for $f$.

In order to estimate $\partial_{x_i}(G-\Phi)$, we write $\Phi(x,y)=\Gamma(y-x)+H(x,y)$, with
\[
\left\{\begin{array}{ll}
-\Delta H(x,y)=0 &\quad y\in \Omega \\
H(x,y)=-\Gamma(y-x)&\quad y\in \partial\Omega,
\end{array}\right.
\]
so that $\partial_{x_i}(G-\Phi)=\partial_{x_i}(L-H)=:u$ solves
\[
\left\{\begin{array}{ll}
-\Delta u-\eps c(y) u=\eps c(y) \partial_{x_i}\Phi(x,\cdot) &\quad y\in \Omega \\
u=0&\quad y\in \partial\Omega.
\end{array}\right.
\]
We apply Poincar\'e inequality and the positivity of the quadratic form in \eqref{eq:quadratic_form_positive} as follows
\[
\begin{split}
\|u\|_{H^1(\Omega)} & \leq C \|\nabla u\|_{L^2(\Omega)} \\
& \leq C \left( \int_\Omega(|\nabla u|^2-\eps c(y)u^2)\,\textrm{d}y \right)^{1/2} \\
&= C \left( \int_\Omega \eps c(y) \partial_{x_i}\Phi(x,y) u\,\textrm{d}y \right)^{1/2}.
\end{split}
\]
Since $\partial_{x_i}\Phi(x,\cdot)\in L^q(\Omega)$ for $1\leq q<2$, we can apply the H\"older inequality and the Sobolev embedding to obtain
\[
\|u\|_{H^1(\Omega)} \leq C\eps^{1/2} \left( \|\partial_{x_i}\Phi\|_{L^{3/2}(\Omega)} \|u\|_{L^3(\Omega)} \right)^{1/2} 
\leq C \eps^{1/2} \|u\|_{H^1(\Omega)}^{1/2}. 
\]
Finally, using again the Sobolev embeddings and a bootstrap argument, we obtain that $u \in W^{2,q}(\Omega)$ for every $1 \leq q < 2$ and
\begin{align} \label{eq:estimate_W_2_q}
\| u \|_{W^{2,q}(\Omega)} \leq C \varepsilon.
\end{align}

\end{proof}



\end{document}